\documentclass[a4paper]{amsart}

\usepackage[english]{babel}
\usepackage[ansinew]{inputenc}
\usepackage{amsmath, amsthm, amssymb, graphicx, ifthen}
\usepackage{color}

\newcommand{\Zb}{\mathbb Z}
\newcommand{\Dc}{\mathcal D}
\newcommand{\Ec}{\mathcal E}

\definecolor{orange}{rgb}{1,0.5,0}

\newcommand{\inv}{^{-1}}

\newcommand{\Nb}{\mathbb N}

\newcommand{\red}[1]{{\color{red}#1}}

\newcommand{\blue}[1]{{\color{blue}#1}}
\newcommand{\changed}{}
\newcommand{\changedm}{}

\newcommand{\Sppaths}{\mathcal {SP}}
\newcommand{\Konst}{6}
\DeclareMathOperator{\free}{F}
\DeclareMathOperator{\Sym}{Sym}

\newcommand{\mOmega}{\Omega}
\newcommand{\pairOmega}{\Omega}
\newcommand{\hOmega}{\widetilde{\Omega}}

\DeclareMathOperator{\rk}{rank}

\DeclareMathOperator{\cc}{b_0}
\DeclareMathOperator{\betti}{b_1}
\DeclareMathOperator{\bad}{bad}
\DeclareMathOperator{\dist}{d}

\newenvironment{customenum}[1]%
{	

	\begin{enumerate}
}%
{\end{enumerate}

}

\newenvironment{customenum*}[1]%
{\begin{enumerate}}%
{\end{enumerate}}

\newtheorem{theorem}{Theorem}
\newtheorem{proposition}[theorem]{Proposition}
\newtheorem{lemma}[theorem]{Lemma}
\newtheorem{corollary}[theorem]{Corollary}

\theoremstyle{remark}
\newtheorem{obs}[theorem]{Observation}
\newtheorem*{remark}{Remark}
\newtheorem*{definition}{Definition}
\newtheorem{unfolding}{Unfolding of type}

\newtheorem{claim}{Claim}
\newtheorem*{claim*}{Claim}
\newtheorem{case}{Case}

\title{On the rank of Coxeter groups}

\author{Mathieu Carette}
\address{Mathieu Carette, D\'epartement de Math\'ematiques CP 216, Universit\'e Libre de Bruxelles, Boulevard du triomphe, 1040 Brussels, Belgium}
\email{mcarette@ulb.ac.be}
\thanks{The first author is a F.R.S.-FNRS research fellow (Belgium).}
\author{Richard Weidmann}
\address{Richard Weidmann, Department of Mathematics, Heriot-Watt University, Riccarton, Edinburgh EH14 4AS, Scotland, UK}
\email{R.Weidmann@ma.hw.ac.uk}
\begin{document}
	
	\begin{abstract} We show that the standard generating set of a Coxeter group is of minimal {cardinality} provided that the non-diagonal entries of the Coxeter matrix are sufficiently large.
	\end{abstract}

	\maketitle

	\section{Introduction}

	Fix a set $S$ of cardinality $n$. A Coxeter group is a group given by a presentation of type 
	\[\langle S \mid (st)^{m_{st}}, s,t \in S\rangle\]
	where $m_{st}=m_{ts}\in \mathbb N_{\ge 2}\cup\{\infty\}$ for $s\neq t \in S$ and $m_{ss}=1$ for $s \in S$. It follows in particular that $s^2=1$ for each $s \in S$, i.e.\ that Coxeter groups are generated by elements of order $2$. Thus the group is determined by the symmetric matrix $M=(m_{st})$ and we denote the group given by the above presentation by $W(M)$. A subset of a group $T \subset W$ is a Coxeter generating set of type $M$ if there is a bijective map $S \mapsto T$ that extends to an isomorphism $W(M) \to W$. It should be noted that $M$ and $|S|$ are not algebraic invariants of $W$ as shown by the following example:
	\[ W \left(\begin{tabular}{ccc} 1 & 3 & 2 \\ 3 & 1 & 2 \\ 2 & 2 & 1  \end{tabular} \right) \cong D_{6} \times \Zb/2\Zb \cong D_{12} \cong W\left(\begin{tabular}{cc} 1 & 6 \\ 6 & 1  \end{tabular}\right) \]
	The Coxeter groups considered in this paper however are \emph{skew-angled}, i.e.\ each $m_{st} \geq 3$. For such a group, it follows from \cite{MuhWeid} that all skew-angled Coxeter generating sets have the same cardinality.
	
	The Coxeter generating set $S$ is often not a generating set of minimal cardinality, the symmetric group $\Sym(n+1)$ for example is a 2-generated group that is also a Coxeter group with a Coxeter generating set of cardinality $n$. In fact all irreducible finite Coxeter groups are $2$-generated \cite{Conder}.

	It is the purpose of this paper to show that standard generating sets are of minimal cardinality provided that the $m_{st}$ are sufficiently large. Let the rank of a group $G$ be the smallest cardinality of a generating set of $G$. We show the following:

	\begin{theorem}\label{thm:main}
		Let $S$ be a set of cardinality $n$ and $M=(m_{st})_{s,t \in S}$ a Coxeter matrix over $S$. Suppose that $m_{st}\geq \Konst. 2^n$ for all $s \neq t \in S$. Then $\rk(W(M))=n$.
	\end{theorem}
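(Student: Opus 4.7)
Since the standard Coxeter generating set $S$ has cardinality $n$, one has $\rk(W(M))\leq n$ for free, and the real content is the reverse inequality. I would argue by contradiction: suppose $T=(g_1,\dots,g_k)$ generates $W(M)$ with $k<n$. The natural arena is the Davis complex (or Cayley graph) of $W(M)$, where each $s \in S$ acts as a reflection across a wall and each pair $\{s,t\}$ bounds a dihedral polygon of perimeter $2m_{st}$. The hypothesis $m_{st}\geq \Konst\cdot 2^n$ makes these polygons very long, giving ample combinatorial room to manipulate words in $S$ using the dihedral relations.

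The plan is a Nielsen-type reduction on the tuple $T$. To $T$ I would associate a combinatorial datum encoding each $g_i$ as a \spec{} in the Davis complex, along with a complexity function built from the invariants suggested by the macros: the component count $\cc$ and first Betti number $\betti$ of an auxiliary graph, together with a counter $\bad$ of local defects. I would then show that whenever $T$ fails to be ``reduced'' in the appropriate sense, some Nielsen move (a replacement $g_i\mapsto g_ig_j\epm$, an inversion, or a conjugation) strictly decreases the complexity. The role of the enormous exponents lies exactly here: any subword that wraps too many times around a dihedral polygon can be shortened by $(st)^{m_{st}}=1$, and the factor $2^n$ absorbs a recursion across the at most $2^n$ standard parabolic subgroups that can appear as labels along a \spec{}.

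After finitely many reductions one obtains a reduced tuple $T'$, still of size $k<n$, generating $W(M)$. The final structural step would be to show that a reduced tuple of size $<n$ cannot meet all the orbits of walls, so that $\la T'\ra$ is contained in a proper standard parabolic $W_{S'}$ with $S'\subsetneq S$. Such a $W_{S'}$ is a proper subgroup of $W(M)$ by standard Coxeter theory, contradicting $\la T'\ra=W(M)$ and closing the argument.

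I expect the main obstacle to be the simultaneous design of the complexity function and the verification of the reduction step: one must guarantee that every non-reduced configuration admits a Nielsen move that strictly decreases the total complexity without raising any component, through a careful case analysis of how \spec{}s interact under multiplication and inversion. The specific threshold $\Konst\cdot 2^n$ is almost surely forced by the worst case of this analysis, with $\Konst=6$ handling local cancellation inside a single dihedral piece and $2^n$ arising from iterating the reduction across the lattice of parabolic subgroups of $W(M)$.
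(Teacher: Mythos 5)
Your sketch does not close the argument; the two places where all the difficulty of the theorem actually sits are precisely the places you leave unargued. First, your final structural step --- that a ``reduced'' tuple of size $k<n$ must be contained in a proper standard parabolic $W_{S'}$ --- is not something one can expect to prove, and it is not how the paper concludes. Generating tuples of skew-angled Coxeter groups need not be conjugate into, or even reducible into, proper parabolics: the examples from \cite{Weid1} recalled in Section~\ref{sec:outline} give generating sets of size $n-1$ with all $m_{st}\ge 2^{n-2}$, and no Nielsen reduction pushes them into a parabolic --- they genuinely generate. So the dichotomy ``either reduce complexity or land in a proper parabolic'' is false as stated, and you give no mechanism replacing it. The paper's endgame is entirely different: one shows that a minimal-complexity object is literally the standard rose $\Theta_S$ (Proposition~\ref{proposition:Gammaf_is_R}), and the count $c_1=\betti(\Theta_S)-|\Sppaths|=n$ then gives the bound. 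Getting there requires Lemma~\ref{lemma:almost_whole_rel} (every reduced word representing a generator contains almost a whole relator, proved via Tits' algorithm) together with a case analysis of how that long subpath can sit inside a folded graph; your proposal never confronts the fact that, unlike in a free group, a folded graph with surjective $\pi_1$-map need not be $\Theta_S$.

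Second, your account of where $\Konst\cdot 2^n$ comes from is not the actual mechanism and hides a real obstruction. The exponential does not come from ``iterating across the lattice of parabolic subgroups''; it comes from the halving phenomenon (Case~\ref{case:fold_Delta_half} of Lemma~\ref{lemma:Delta_folded}): when a closed special path of even type folds onto itself, the relevant entry is replaced by $m_{st}/2$, and this can recur up to $c_*\le n$ times, which is exactly why one needs $m_{st}\ge \Konst\cdot 2^{c_*}$ throughout. A complexity function that does not anticipate this halving --- and yours is never actually defined, so there is nothing to check --- will not strictly decrease under the moves you describe. Relatedly, the move that handles a loop edge attached to a long embedded arc \emph{increases} the Betti number (one ``translates'' the loop along the arc), so any complexity based naively on $\betti$ or on word length fails; the paper compensates by subtracting the number of special paths and by a seven-component lexicographic complexity. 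Without specifying the moves, the complexity, and the decrease in each case, the proposal is a restatement of the strategy rather than a proof.
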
 

Some remarks are in order:

\begin{enumerate}
\item While the bound given in the above theorem is probably not the best possible it could at best be improved by replacing $\Konst$ by a smaller constant. Indeed in \cite{Weid1} Coxeter groups where constructed where $m_{st}\ge 2^{n-2}$ for $s\neq t$ but the standard generating set is not minimal. These examples were obtained by iterating an observation of Kaufmann and Zieschang \cite{KaufZie}. \label{rem:non-example}
\item When $m_{st}$ is even for all $s\neq t$ then a simple homology argument shows that the standard generating set maps onto $\left(\Zb/2\Zb\right)^n$ and is therefore of  minimal cardinality.
\item If all $m_{st}$ are divisible by a common prime then the homological methods of Lustig and Moriah \cite{LustMor} again show that the Coxeter generating set is minimal, see \cite{KaufZie}.
\item The rank of Coxeter groups with Coxeter rank $3$ was computed by Klimenko and Sakuma in \cite{KlimSak}. In fact they classified all 2-generated discrete subgroups of $\hbox{Isom}(\mathbb H^2)$.
\item The rank of Coxeter groups that can be written as trees of dihedral groups was computed in \cite{Weid2}.
\item It follows from a result of Petersen and Thom \cite{PetThom} that $\hbox{rank }W(M)\ge\frac{n}{2}$ provided that $\frac{1}{2}\sum\limits_{s \neq t}\frac{1}{m_{st}}<1$.
\end{enumerate}

	This paper is organized as follows. We introduce a way to encode subgroups of a group by graphs, along with folding moves and Arzhantseva-Ol'shanskii moves in Section~\ref{sec:foldings}. An informal discussion of the main ideas of the proof of Theorem~\ref{thm:main} is given in Section~\ref{sec:outline}, followed by a discussion of the examples mentioned in remark~\eqref{rem:non-example} \changedm{above}. Motivated by the outline, Section~\ref{sec:reformulation} introduces tame marked decompositions and reformulates Theorem~\ref{thm:main} in terms of complexities of decompositions. Then tame marked decompositions of minimal complexity are investigated in Sections~\ref{sec:folding_sequence} and~\ref{sec:folded_graph}, leading to a proof of Theorem~\ref{thm:main}.
	
	\section{Graphs representing subgroups} \label{sec:foldings}
		A \emph{graph} $\Theta$ consists of a vertex set $V\Theta$, an edge set $E\Theta$ with an involution $^{-1}:E\Theta\to E\Theta$ without fixed point and two maps $\alpha,\omega:E\Theta\to V\Theta$ such that $\alpha(e)=\omega(e^{-1})$ for all $e\in E\Theta$. We call $\alpha(e)$ the \emph{initial vertex} of $e$ and $\omega(e)$ the \emph{terminal vertex} of $e$. For an (edge) path $\gamma=e_1,\ldots ,e_k$ we put $\alpha(\gamma):=\alpha(e_1)$ and $\omega(\gamma):=\omega(e_k)$. 

		If $S$ is a subset of some group $G$ that is closed under inversion then an \emph{$S$-labeling} of a graph $\theta$ is a map $\ell:E\Theta\to S$ such that $\ell(e^{-1})=\ell(e)^{-1}$ for all $e\in E\Theta$. Note that if all elements of $S$ are of order two, in particular if $G$ is a Coxeter group and $S$ a Coxeter generating set, then $\ell(e^{-1})=\ell(e)$ for all $e\in E\Theta$. This will be the case we are mostly interested in. For an edge path $\gamma=e_1,\ldots ,e_k$ we put $\ell(\gamma)=\ell(e_1)\cdot\ldots\cdot \ell(e_k)$.

		Given an $S$-labeled graph as above and a base vertex $v_0\in \Theta$ there is a homomorphism $$\mu:\pi_1(\Theta,v_0)\to G$$ given by $$[\gamma]\mapsto \ell(\gamma)$$ and we call $\mu(\pi_1(\Theta,v_0))$ the \emph{subgroup of $G$ represented by $(\Theta,v_0)$}. We will mostly be interested in the situation where $\mu$ is surjective and $\Theta$ is connected.

		Let now $G$ be a group and $S=S^{-1}$ be a generating of $G$. For any generating set $X=\{x_1,\ldots ,x_k\}$ of a subgroup $U$ of $G$ we can define an $S$-labeled graph $(\Theta_X,v_0)$ that represents $U$ as follows, see Figure~\ref{fig:wedge}.

		\begin{enumerate}
			\item For $1\le i\le k$ choose a word $w_i$ in $S$ of length $l_i$ that represents $x_i$ in $G$.
			\item Let $\Theta_X$ be the wedge of $k$ loops of length $l_1,\ldots ,l_k$ joined at the vertex $v_0$.
			\item The $i$-th \changedm{loop} is labeled by the word $w_i$ for $1\le i\le k$.
		\end{enumerate}
		\begin{figure}[htb]
			\center\input{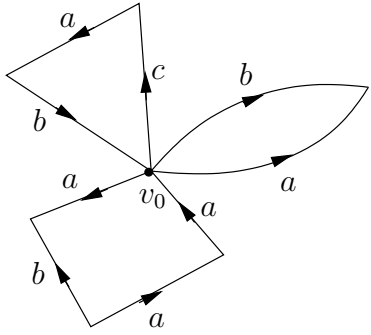}
			\caption{A labeled graph $(\Theta_X,v_0)$ representing $U=\langle X\rangle$ for $X= \{ab^{-1},cab,ab^{-1}a^2\}$}
			\label{fig:wedge}
		\end{figure}

		It is clear that the constructed $S$-labeled graph does indeed represent $U=\langle x_1,\ldots ,x_k\rangle$ as the obvious basis of $\pi_1(\Theta_X,v_0)$ gets mapped to $\{x_1,\ldots ,x_k\}$. Note that the orientation of an edge $e$ indicates whether the label drawn is the label of $e$ or of $e^{-1}$. As in subsequent sections all labeling are by elements of order $2$ we will be able to drop the orientation.

		We will now discuss two types of modifications that can be applied to an $S$-labeled graph $(\Theta,v_0)$ without affecting the represented subgroup. 
		
		\subsection{Stallings folds} \label{sec:stall_folds}
		The first modification is that of a Stallings fold \cite{Stall}. If there exist two edges $e_1,e_2\in E\Theta$ such that $\alpha(e_1)=\alpha(e_2)$ and $\ell(e_1)=\ell(e_2)$ then we define $\Theta'$ to be the labeled graph obtained from $\Theta$ by identifying $e_1$ and $e_2$ and labeling the new edge with $\ell(e_1)$ while leaving all other edges unchanged. Note that one or both of $e_1$ and $e_2$ may be loop edges. We denote the image of $v_0$ under the quotient map $v_0'$ and say that $(\Theta',v_0')$ has been obtained from $(\Theta,v_0)$ by a Stallings fold. 

		\begin{figure}[htb]
			\center\input{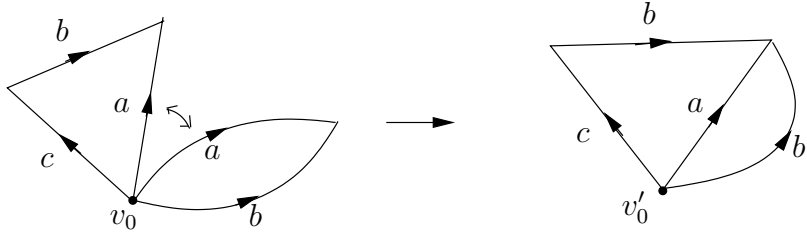}
			\caption{Two edges with label $a$ and initial vertex $v_0$ are identified}
			\label{fig:stallingsfold}
		\end{figure}
		
		We say that an $S$-labeled graph is \changedm{\emph{folded}} if no Stalling fold can be applied to it, i.e. if it has no two edges that have the same initial vertex and the same label. 

\smallskip If $\Theta'$ is obtained from $\Theta$ by a finite sequence of Stallings folds then there exists a natural surjection $\psi:\Theta\to\Theta'$. We record elementary but important observations, the same statements hold for labeled graphs with base vertices.
		\begin{lemma} \label{lemma:folded_graph} Let $\Theta$ be an $S$-labeled graph
			\begin{enumerate}
				\item If $\Theta_1, \Theta_2$ are folded graphs obtained from $\Theta$ by sequences of folds, then there is a (unique) label-preserving isomorphism $\varphi : \Theta_1 \to \Theta_2$ such that the surjections $\psi_i: \Theta \to \Theta_i$ satisfy $\psi_2 = \varphi \circ \psi_1$. \label{unique_folded}
				\item Suppose $\Theta$ is folded, then the label of any reduced path in $\gamma$ is reduced. If $S=X\dot\cup X^{-1}$ then the map $\mu : \pi_1(\Theta,v_0) \to \free(X)$ is injective.
			\end{enumerate}
		\end{lemma}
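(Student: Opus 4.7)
My plan for (1) is a confluence argument. Since every Stallings fold strictly decreases the number of edges of $\Theta$, any sequence of folds terminates in finitely many steps, so the existence of $\Theta_1$ and $\Theta_2$ is not the issue; the content is uniqueness. I would introduce the smallest equivalence relation $\sim$ on $V\Theta \sqcup E\Theta$ that is compatible with $\alpha$, $\omega$, the involution $^{-1}$ and the labeling $\ell$, and which identifies any two edges sharing both an initial vertex and a label. By construction the quotient $\Theta/\sim$ carries a well-defined $S$-labeling and is folded. Two verifications are needed: (a)~every fold $\Theta \to \Theta'$ identifies only pairs that are already $\sim$-equivalent, so by induction any folded graph $\Theta_i$ obtained from $\Theta$ by folds is a quotient of $\Theta/\sim$; and (b)~any folded quotient of $\Theta/\sim$ is equal to $\Theta/\sim$. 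Together these force $\Theta_1 = \Theta_2 = \Theta/\sim$ as labeled quotients of $\Theta$, yielding the unique label-preserving isomorphism $\varphi$ with $\psi_2 = \varphi \circ \psi_1$. An alternative route is to establish local confluence (completing any two competing folds to a common graph by further folds) and invoke Newman's lemma.

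For (2), the key observation is the following. Suppose $\gamma = e_1,\ldots,e_k$ is reduced, meaning $e_{i+1} \neq e_i^{-1}$ for all $i$. If the label $\ell(\gamma)$ were not reduced, then $\ell(e_{i+1}) = \ell(e_i)^{-1}$ for some $i$. The two edges $e_i^{-1}$ and $e_{i+1}$ both have initial vertex $\omega(e_i) = \alpha(e_{i+1})$ and carry the same label $\ell(e_i)^{-1}$. Since $\Theta$ is folded this forces $e_{i+1} = e_i^{-1}$, contradicting the assumption on $\gamma$. Hence $\ell(\gamma)$ is a reduced word over $S$. For the injectivity of $\mu : \pi_1(\Theta,v_0) \to \free(X)$ when $S = X \dot\cup X^{-1}$, I would use the standard fact that every nontrivial element of $\pi_1$ of a graph is represented by a unique nontrivial reduced edge loop at $v_0$; applying the label calculation above shows that the label of such a loop is a nonempty reduced word in $X \cup X^{-1}$, hence nontrivial in $\free(X)$.

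The main obstacle is really only the bookkeeping for (1): one must set up the equivalence relation so that it automatically respects the involution (identifying $e$ with $e'$ forces identifying $e^{-1}$ with $(e')^{-1}$) and so that the quotient inherits a well-defined labeling. Once this is in place, both (a) and (b) reduce to short inductions on the number of folds, and part (2) is an immediate consequence of the folded property applied edge-by-edge along $\gamma$.
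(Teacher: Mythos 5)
The paper does not actually prove this lemma: it is recorded as an ``elementary but important observation'' with an implicit appeal to Stallings' paper, so there is no argument of the authors' to compare yours against. Your proof is the standard one (the universal property of the folded quotient), and it is essentially correct; part (2) in particular is exactly right and complete. One small logical slip in part (1) should be repaired. From ``every fold identifies only pairs that are already $\sim$-equivalent'' you get, by induction, that the equivalence relation $R_i$ on $\Theta$ induced by the sequence of folds leading to $\Theta_i$ satisfies $R_i\subseteq\;\sim$; this says that $\Theta/\!\sim$ is a quotient of $\Theta_i$, not (as you write in (a)) that $\Theta_i$ is a quotient of $\Theta/\!\sim$. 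The missing converse inclusion $\sim\;\subseteq R_i$ is where the hypothesis that $\Theta_i$ is folded enters: if $\alpha(e)\,R_i\,\alpha(e')$ and $\ell(e)=\ell(e')$, then the images of $e$ and $e'$ in $\Theta_i$ share an initial vertex and a label, hence coincide because $\Theta_i$ is folded, so $e\,R_i\,e'$. Thus $R_i$ satisfies both defining closure conditions of $\sim$, and minimality of $\sim$ gives $\sim\;\subseteq R_i$, whence $R_i=\;\sim$ and $\Theta_i=\Theta/\!\sim$ for each $i$; uniqueness of $\varphi$ follows since $\psi_1$ is surjective. With that one direction corrected (or, as you suggest, replaced by local confluence plus Newman's lemma), the argument is complete.
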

		Given a graph $\Gamma$, we denote by $\Theta^f$ the unique folded graph obtained from $\Theta$ by a sequence of folds. The uniqueness is a consequence of Lemma~\ref{lemma:folded_graph}~\eqref{unique_folded}. Note that $\Theta^f$ comes equipped with a canonical label-preserving map $\Theta \to \Theta^f$.
		
		\subsection{Arzhantseva-Ol'shanskii moves}\label{sec:AO-moves}
		The second type of modification was introduced by Arzhantseva and Ol'shanskii \cite{AO}, see also \cite{KapSchupp}, and we call it an \emph{AO-move}. Suppose that $(\Theta,v_0)$ is an $S$-labeled graph with base vertex and that  the following is given, see Figure~\ref{fig:AOmove}.

		\begin{enumerate}
			\item An edge path $\gamma=e_1,\ldots ,e_m$ in $\Theta$ (dotted line).
			\item A non-trivial subpath $\hat\gamma=e_i,\ldots ,e_j$ of $\gamma$ such that $\omega(e_p)$ is of valence $2$ and that $\omega(e_p)\neq v_0$ for $i\le p\le j-1$ (double dotted line)
			\item A word $w$ in $S$ of length $q$ such that $w=_G\ell(\gamma)$
		\end{enumerate}

		We then construct a new $S$-labeled graph $\Theta'$ by removing all edges and vertices of $\hat\gamma$ except the initial and terminal vertex of $\hat\gamma$. We then add a segment $\tilde \gamma$ of length $q$ by identifying the initial and terminal vertices of $\tilde \gamma$ with those of $\gamma$. We label $\tilde \gamma$ by the word $w$ and relabel $v_0$ by $v_0'$.

		\begin{figure}[htb]
			\center\input{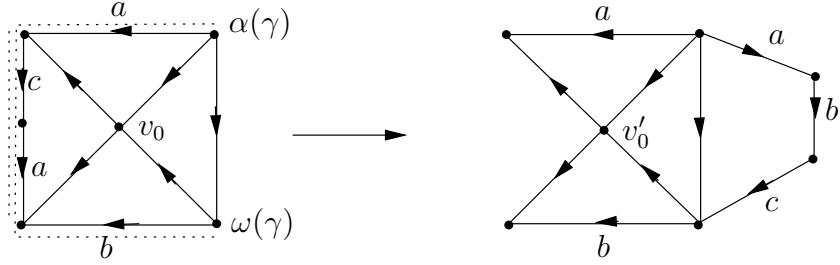}
			\caption{Here $w=abc=_Gacab^{-1}=\ell(\gamma)$}
			\label{fig:AOmove}
		\end{figure}

\medskip The following is a simple but important observation, see \cite{Stall} and \cite{AO} for details.

\begin{lemma} \label{lemma:fold_same_subgroup} Suppose that $(\Theta',v_0')$ is obtained from $(\Theta,v_0)$ by a Stallings fold or an AO-move. Then $(\Theta',v_0')$ and $(\Theta,v_0)$ represent the same subgroup of $G$.
\end{lemma}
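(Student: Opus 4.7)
The plan is to verify in each case that the subgroup of $G$ represented by $(\Theta',v_0')$ coincides with that represented by $(\Theta,v_0)$; equivalently, that the sets of $G$-labels of based loops at the respective basepoints agree.

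For a Stallings fold identifying edges $e_1,e_2$ with common initial vertex $u$ and common label $s$, the quotient map $\psi:\Theta\to\Theta'$ is label-preserving and surjective. Pushing a based loop of $\Theta$ forward under $\psi$ yields a based loop in $\Theta'$ with the same label, so $\mu(\pi_1(\Theta,v_0))\subseteq \mu(\pi_1(\Theta',v_0'))$. For the reverse inclusion I would take a loop $\gamma'$ at $v_0'$ in $\Theta'$ and build, edge by edge, a path $\gamma$ in $\Theta$ starting at $v_0$ whose $\psi$-image is $\gamma'$. The only obstruction is the case $\omega(e_1)\neq\omega(e_2)$: some edges of $\gamma'$ out of the identified vertex admit a lift only from $\omega(e_1)$ or only from $\omega(e_2)$, so whenever needed I insert the path $e_1^{-1}e_2$ or $e_2^{-1}e_1$ — which has label $s^{-1}s=1$ in $G$ — to move between $\omega(e_1)$ and $\omega(e_2)$ without changing the label. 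The final endpoint of the lift lies in the $\psi$-fibre of $v_0'$, and one further such insertion (if needed) closes the path into a loop at $v_0$ with label $\ell(\gamma')$ in $G$.

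For an AO-move, decompose $\gamma=\gamma_1\hat\gamma\gamma_2$, so that $\ell(\gamma_1)\ell(\hat\gamma)\ell(\gamma_2)=\ell(\gamma)=_G w=\ell(\tilde\gamma)$. Given a loop $\delta'$ at $v_0'$ in $\Theta'$, replace each traversal of $\tilde\gamma^{\pm 1}$ by $\gamma^{\pm 1}$ in $\Theta$; both share the same endpoints, and the resulting based loop in $\Theta$ has the same $G$-label as $\delta'$ since $w=_G\ell(\gamma)$. Conversely, given a loop $\delta$ at $v_0$ in $\Theta$, the valence-$2$ hypothesis on the interior vertices of $\hat\gamma$ together with the fact that $v_0$ lies outside this interior forces every maximal subpath of $\delta$ meeting the interior of $\hat\gamma$ to enter and leave only at $\alpha(\hat\gamma)$ or $\omega(\hat\gamma)$ and to consist entirely of edges of $\hat\gamma^{\pm 1}$. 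Such a subpath is a walk along the linear segment $\hat\gamma$; modulo the triviality of backtrack labels in $G$, its label is $1$, $\ell(\hat\gamma)$, or $\ell(\hat\gamma)^{-1}$. Replacing each such subpath in $\Theta'$ by the corresponding detour — the trivial path, $\gamma_1^{-1}\tilde\gamma\gamma_2^{-1}$, or its inverse — yields a loop at $v_0'$ in $\Theta'$ with the same $G$-label as $\delta$, using $\ell(\hat\gamma)=_G \ell(\gamma_1)^{-1}w\ell(\gamma_2)^{-1}$.

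The main technical point is the AO-move's reverse inclusion: one must carefully exploit the valence and basepoint hypotheses on the interior of $\hat\gamma$ to ensure that $\delta$ interacts with $\hat\gamma$ only through excursions along the linear segment, so that the detour substitution is both well-defined and label-preserving in $G$. The Stallings fold case is a direct, if slightly fussy, lifting argument.
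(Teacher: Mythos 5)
Your argument is correct and is essentially the standard proof that the paper itself omits (it only points to \cite{Stall} and \cite{AO} for details): a label-preserving lifting argument with insertions of $e_1^{-1}e_2$ for the Stallings fold, and the excursion analysis along the valence-$2$ interior for the AO-move, using $w =_G \ell(\gamma)$ to match labels. The one small asymmetry is that in the forward AO direction a loop in $\Theta'$ may also make partial, backtracking excursions into the interior of $\tilde\gamma$ rather than full traversals; these are handled exactly as in your reverse direction (each maximal excursion has $G$-label $1$ or $w^{\pm 1}$), so nothing is missing in substance.
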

				
		For a finite graph $\Gamma$ let $\cc(\Gamma)$ denote the number of connected components of $\Gamma$, let $\betti(\Gamma)$ be the first Betti number of $\Gamma$ and let $\chi(\Gamma)=\cc(\Gamma)-\betti(\Gamma)$ be the Euler characteristic of $\Gamma$.
		Let $\gamma$ be a path in a graph. An \emph{extremal edge of $\gamma$} refers to either the first or the last edge of $\gamma$, and all other edges of $\gamma$ are called \emph{inner edge of $\gamma$}. The \emph{inner} supbath of $\gamma$ is the subpath consisting of its inner edges. We further call $\alpha(\gamma)$ and $\omega(\gamma)$ the {\em extremal} vertices of $\gamma$.
		
	\section{Outline of the proof} \label{sec:outline}
		
		In this section we informally outline the general ideas of the proof of Theorem~\ref{thm:main}. As in the remainder of this paper, we fix a set $S$ of cardinality $n$.
		
		Recall that we aim to show that if $M$ is Coxeter matrix on $S$ whose entries are sufficiently large, then the associated Coxeter group $W = W(M)$ has rank $n$. The global strategy is the following:

\begin{enumerate}
\item Start with a generating set $X = \{x_1,\ldots,x_k\}$ of minimal cardinality $k = \rk(W)$. Represent $X$ by an $S$-labeled graph $\Theta_X$ as in Section~\ref{sec:foldings}. Note that 
the $S$-labeling induces a surjection $\pi_1(\Theta_X) \to W$. 

\item Then "simplify" $\Theta_X$ and produce a simpler graph $\Theta_1$ preserving the property that $\pi_1(\Theta_1) \to W$ is surjective. Keep on simplifying $\Theta_X = \Theta_0 \to \Theta_1 \to \Theta_2 \to \Theta_3 \to \ldots$ until { no further simplification is }possible. Observe that an appropriately defined complexity does not increase in this simplification process.
\item Derive the statement of the main theorem by evaluating and comparing the complexities of the initial and the terminal graph of the sequence.
\end{enumerate}

This strategy \changedm{can be used to prove} that $F_n$ is a group of rank $n$ and can also be used to prove Grushko's theorem. When proving that $F_n$ cannot be generated by fewer than $n$ elements this strategy is implemented in the following way: 

Let $F_n=F(a_1,\ldots ,a_n)$, $A=\{a_1,\ldots ,a_n\}$ and $S=A\cup A^{-1}$.  Let $X$ be a generating set of cardinality $k$. Construct $\Theta_X$ as above. The complexity is the first Betti number, in particular $b_1(\Theta_X)=k$ and the simplification process is to apply Stallings folds, in particular the terminal graph $\Theta^f$ is folded. Thus Lemma~\ref{lemma:folded_graph}~(2) and the solution to the word problem in the free group implies that the terminal graph is a rose with $n$ loop edges as any $a_i$ must be represented by a reduced closed path at the base vertex; thus $\Theta^f=\Theta_A$. The assertion now follows as Stallings folds do not increase the Betti number of a graph and therefore $$\rk F_n=k=b_1(\Theta_X)\ge b_1(\Theta^f)=b_1(\Theta_A)=n.$$

If the group is Coxeter group rather than a free group, then the final conclusion fails, i.e. one cannot argue that that the folded graph is isomorphic to $\Theta_S$; this is true as, other than in the free group, there are many reduced words in $S$ representing some given $s\in S$. Thus if $\Theta_i \neq \Theta_S$ is folded we need to find another way of simplifying~$\Theta_i$. 

Since $\Theta_i \neq \Theta_S$, there is an element $u \in S$ that is not the labeling of a loop edge at the basepoint. Since the map $\pi_1(\Theta_i) \to W$ is surjective, there is a closed path $p$ based at the basepoint such that $\ell(p) =_W \changedm{u}$. Choose $p_u$ to be a shortest such path. By the definition of $u$, there is no loop edge with label $u$ at the basepoint so $p_u$ is of length at least 2. Moreover, $p_u$ is reduced as it is chosen to be shortest, hence $\ell(p_u)$ is a reduced word as $\Theta_i$ is folded. Thus Lemma~\ref{lemma:almost_whole_rel} applies to $\ell(p_u)$ and implies the existence of a subpath $\gamma$ of $p_u$ whose label is almost a whole relator.
			
			The image of $\gamma$ in $\Theta_i$ can only take very few shapes since $\Theta_i$ is folded. Indeed, either $\gamma$ is embedded, or it takes one of the three shapes described in Figure~\ref{fig:shapes}. In this outline, we only treat the cases where $\gamma$ is embedded and where the image of $\gamma$ consist of a single loop edge and a long embedded path.
			\begin{figure}[htb]
				\center \input{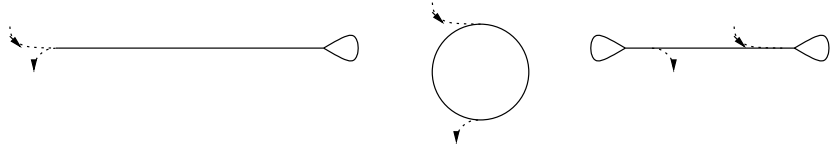}
				\caption{Possibilities for the image of $\gamma$}
				\label{fig:shapes}
			\end{figure}

Suppose that $\gamma$ is embedded in $\Theta_i$. As relators are sufficiently long compared to $n$ and since $\betti(\Theta_i) \leq n$ then $\gamma$ contains a subpath $\eta$ of length $4$ consisting of vertices of degree 2 in $\Theta_i$. Moreover the labeling of $\gamma$ is a relator minus three letters. Here we construct $\Theta_{i+1}$ by performing an AO-move removing $\eta$ and adding a path of length $3$ from $\alpha(\gamma)$ to $\omega(\gamma)$. This preserves the Betti number and simplifies the graph by decreasing the number of edges just as in the case of a Stallings fold.

\medskip Suppose now that the image of $\gamma$ consist of a single loop edge and a long embedded path, i.e. we are in the case illustrated in the first part of Figure~\ref{fig:shapes}. In order for this path to read almost a whole relation this embedded path must have length close to half a relation. The idea is to \changedm{``}translate" the loop edge along the long path, according to the relation $(stst\ldots stst)\inv t (stst\ldots stst) = s$ or $(stst\ldots stst)\inv t (stst\ldots stst) = t$ depending on whether $st$ is of odd or even order (see Figure~\ref{fig:translate}).
			\begin{figure}
				\center \input{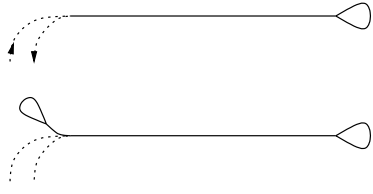}
				\caption{Translating a loop edge}
				\label{fig:translate}
			\end{figure}

		Thus  we handle this situation by keeping the initial loop edge and adding the new loop edge to $\Theta_{i+1}$. Note now that $\betti(\Theta_{i+1}) = \betti(\Theta_i) + 1$, i.e. our original complexity increases and the setup must be modified to take into account the fact that the two loops are translates of each other along some path. To do this we keep track of the translating path, later called a \emph{special path}. As a complexity we then replace the Betti number by $ \betti(\Theta_i) - |\Sppaths_i|$ where $\Sppaths_i$ is the collection of special paths. An additional problem \changedm{is} that this modification does not decrease the number of edges, i.e. does not simplify in the same way as before. However we can think of the edges that lie on special path as being "used up" to translate. In particular it will turn out the number of edges that are not used up decreases.

\smallskip It turns out that the actual complexity used in the proof is more subtle. Moreover the complexity as we define it will \changedm{incorporate} information that allows to conclude as outlined above but that also decreases by any of the modification we perform.  One of the additional problems that needs to be dealt with, and that is in fact responsible for the exponential bound, can be illustrated by discussing some examples from \cite{Weid1}. These examples are Coxeter groups such that the standard generating set $S = \{s_1, \ldots, s_n\}$ is not of minimal cardinality, but where each element of the Coxeter matrix is large, i.e.\ $m_{ij} \geq 2^{n-2}$ for $1\le i \neq j$ such that $1 \leq i,j\leq n$. We discuss an example in the case $n=5$, it will be clear that the example can be generalized to arbitrary $n$. 
			
			Choose any odd integer, say $101$. Consider the Coxeter matrix $M = (m_{ij})_{1 \leq i,j \leq 5}$ where $m_{ii} = 1$ for $1 \leq i \leq 5$, $m_{12} = 8$, $m_{2j} = 101$ for $j \geq 3$ and $m_{ij} = +\infty$ for all other entries. Consider the following subset of $W(M)$:
			\[X = \{s_2, \underbrace{s_1s_2s_1s_2s_1s_2s_1}_{\text{7 letters}}(s_3s_2)^{50}, \underbrace{s_1s_2s_1}_{\text{3 letters}}(s_4s_2)^{50}, \underbrace{s_1}_{\text{1 letter}}(s_5s_2)^{50} \}\]
			We claim that $X$ is a generating set for $W(M)$. Indeed, $S$ can be obtained by making products of elements of $X$ as shown by the sequence of $S$-labeled graphs in Figure~\ref{fig:non-example}. Therefore $\rk(W(M)) \leq 4$.
			
			\begin{figure}
				\center \input{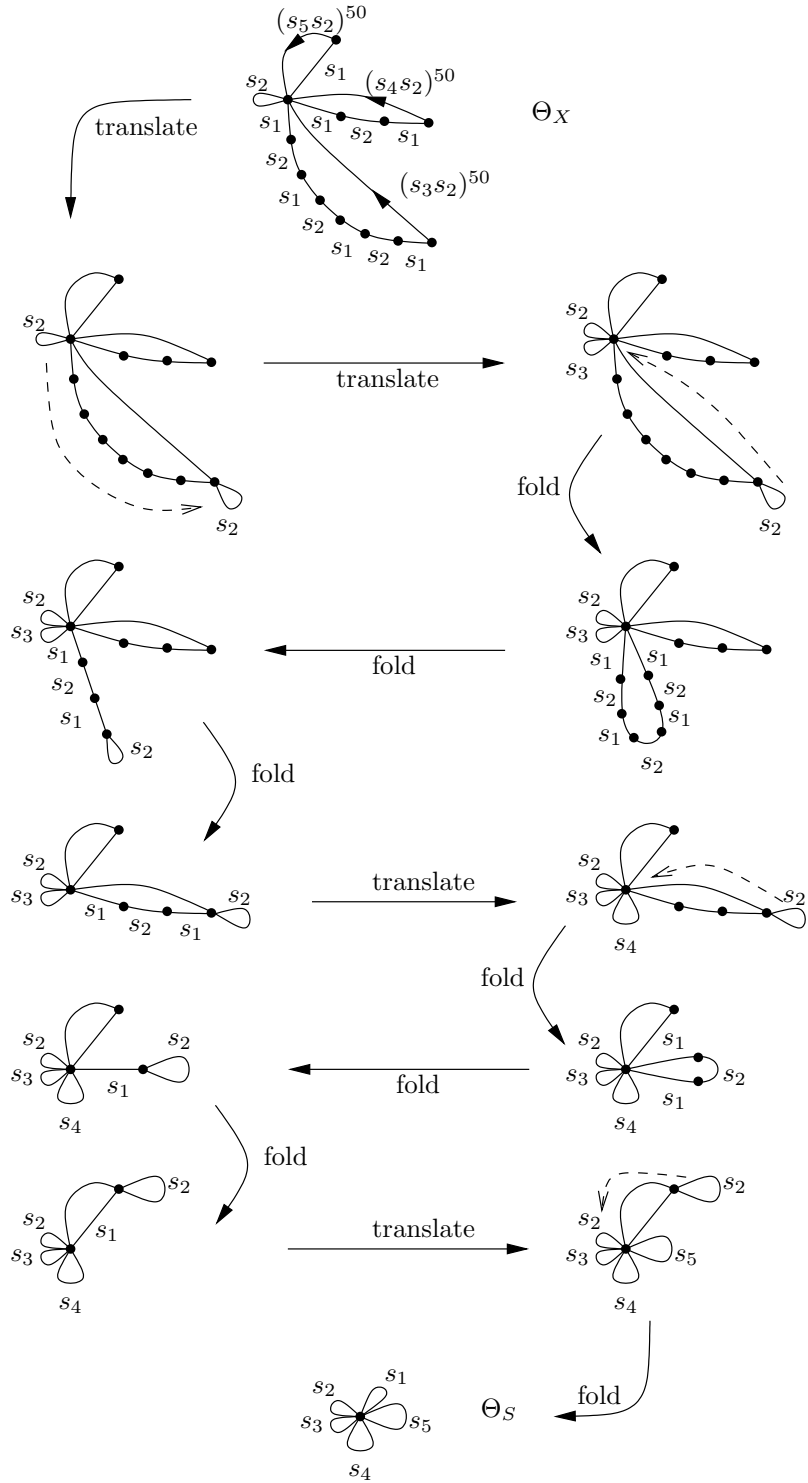}
				\caption{A Coxeter group with large labels but rank smaller than $|S|$.}
				\label{fig:non-example}
			\end{figure}
			
			\begin{remark} An explanation for this phenomenon is that the special path with labelling $s_1s_2s_1s_2s_1s_2s_1$ keeps getting \emph{halved}, eventually becoming a loop edge. In order to prevent this, we require that each special path remains long enough even after halving $n$ times (hence the exponential bound). The halving phenomenon is taken care of in the proof of Lemma~\ref{lemma:Delta_folded}, see Figure~\ref{fig:fold_Delta_half}.
			\end{remark}

	\section{Setup} \label{sec:reformulation}
	
	It is the purpose of this section to introduce the right setup and the appropriate notion of {complexity} needed for the proof of the main theorem. We further observe some simple consequences.

		\subsection{Definitions and conditions}\label{subsec:marked_decomp}
			Fix a set $S$ of cardinality $n$. Let $M$ be a Coxeter matrix over $S$. The $M$-special graphs keep track of loop edges, their translates and the special path along which they are translated.
			
			An \emph{$M$-special graph} (or more simply a special graph) $\Delta$ is an $S$-labeled graph such that the following holds (see Figure~\ref{fig:special_graph}). There is a collection of paths $\Sppaths$, called \emph{special paths} such that
			\begin{customenum}{$\Delta$}
				\item Each special path is of length at least $5$. \label{cond:Delta_length_sppath}
				\item Each non-loop edge lies in a special path, and there is no isolated vertex.
				\item Each special path is simple or simple closed.
				\item For two special paths $\delta \neq \delta'$ any component of $\delta\cap\delta'$ consists of a common extremal vertex and possibly (at most two) common extremal edges. \label{cond:sppaths_emb_or_closed}
				\item If the basepoint $v$ of a loop edge lies on a special path $\delta$, then $v$ is an extremal vertex of $\delta$. 
				\item \label{cond:Delta_relator} For each special path $\delta$ there are $s\neq t \in S$ and loop edges $e$ and $f$ based at $\alpha(\delta)$ and $\omega(\delta)$ such that the path $\delta,f,\delta\inv,e$ reads the relation $(st)^{m_{st}}$.
			\end{customenum}
			Note that the set $\{s,t\}$ in condition \eqref{cond:Delta_relator} is unique. Call this set the \emph{type} of the special path $\delta$. Let $\Ec$ be the set of loop edges of $\Delta$.
			\begin{remark} Condition \eqref{cond:Delta_relator} implies that $\ell(e) = \ell(f)$ if and only if $m_{st}$ is even.
			\end{remark}
			\begin{figure}
				\center \input{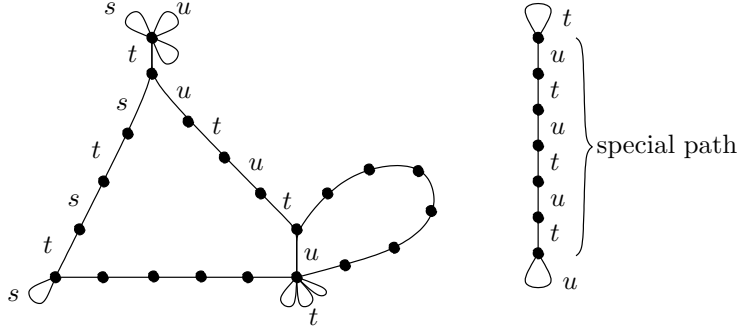}
				\caption{An $M$-special graph where $m_{st} = 6$ and $m_{tu} = 7$.}
				\label{fig:special_graph}
			\end{figure}
			
			Crucial for the setup are decompositions of $\Theta$.
			
			A \emph{decomposition} (of a labeled graph $\Theta$) is a tuple $\Dc = (M,\Gamma,\Delta,F,p,\Theta)$ where
			\begin{itemize}
				\item $M$ is a Coxeter matrix over $S$, with associated Coxeter group $W(M)$ generated by $S$. 
				\item $\Gamma$ is a (not necessarily connected) $S$-labeled graph.
				\item $\Delta$ is a (not necessarily connected) $M$-special graph.
				\item $F$ is a subgraph of $\Delta \backslash \Ec$ and and $p:F \to \Gamma$ is a label-preserving graph morphism. 
				\item $\Theta$ is the $S$-labeled graph obtained from the disjoint union $\Gamma \sqcup \Delta$ by identifying each vertex and edge of $F$ with its image under $p$.
			\end{itemize}
			
			\begin{figure}[htb]
				\center \input{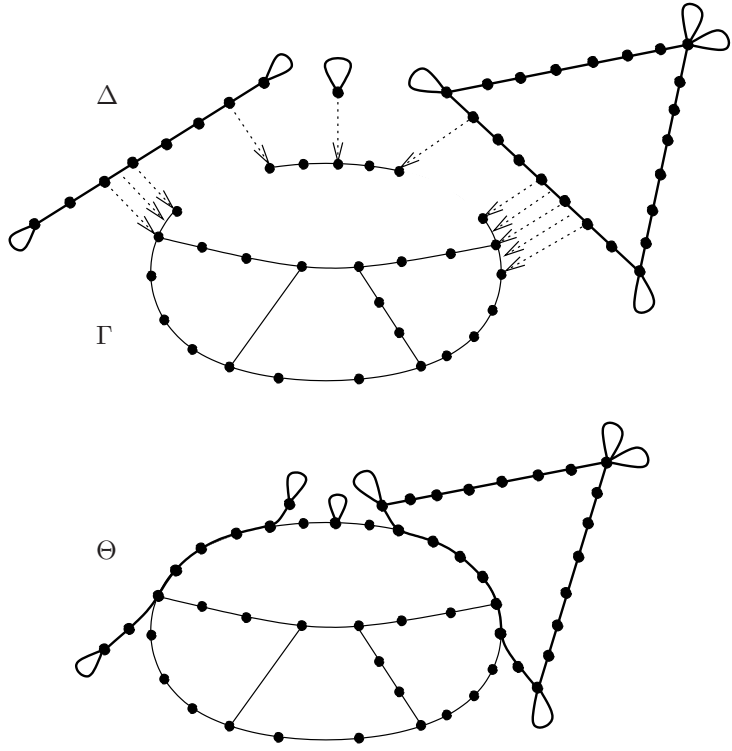}
				\caption{A decomposition of a graph $\Theta$}
				\label{fig:decomposition}
			\end{figure}

			When drawing a decomposition, we will draw $\Delta$ slightly thicker, and $p$ will be represented by dotted arrows from vertices and edges of $\Delta$ to $\Gamma$. Collapsing these dotted edges yields the graph $\Theta$, where images of edges of $\Delta$ are also drawn thicker. See Figure~\ref{fig:decomposition}.

\smallskip If more than one decomposition is considered, the components of a decomposition will consistently be denoted with the same decorations as the decomposition, for example $\Dc' = (M',\Gamma',\Delta',F',p',\Theta')$.

\smallskip			Note that any edge of $\Gamma\sqcup\Delta$ has a well-defined image in $\Theta = (\Gamma \sqcup \Delta) / \{x \sim p(x)\}$. Throughout this paper, we use the following convention: if $X$ is a subgraph of $\Gamma \sqcup \Delta$ then $\bar X$ denotes its image in $\Theta$.

			\smallskip A priori the way special paths intersect is very complicated 
			and therefore $F$ can be complicated as well.  We want to ensure that the graph $F \subset \Delta$ is a forest.  Thus we introduce tame markings to keep some control over $F$ and hence also over the embedding of special paths in $\Theta$. 
						

		\smallskip 	A \emph{marked decomposition}  is a pair $(\Dc,\pairOmega)$ where $\Dc$ is a decomposition and $\pairOmega$ is a subgraph of $\overline{\Delta \backslash \Ec}$. $\mOmega$ is called a \emph{marking}. We let $\hOmega$ denote the preimage of $\mOmega$ in $\Delta \backslash \Ec$.
			Denote by $\mOmega_k$ the $k$-neighborhood of $\mOmega$ in $\overline{\Delta \backslash \Ec}$ and let $\hOmega_k$ be the $k$-neighborhood of $\hOmega$ in $\Delta \backslash \Ec$. 
			\begin{remark} Clearly the image of $\hOmega_k \subset \Delta$ lies in $\mOmega_k \subset \bar \Delta$, i.e.\ $\overline{\hOmega_k}\subset \mOmega_k$. The opposite inclusion does not hold in general. However we will see that it does for $k=3$ provided that  condition \eqref{cond:Omega} defined below holds, see Lemma~\ref{homotopyOmega3}.
			\end{remark}
			
			\begin{definition} The \emph{potential} of a decomposition is
			\[c_* = \betti(\Theta) + \cc(\Delta) - |\Ec|.\]
			\end{definition}
			
			\begin{definition} A marked decomposition $(\Dc,\pairOmega)$ is \emph{tame} if the following conditions are satisfied.
			\begin{customenum*}{$\mathbf{\Theta}$}
				\item $\Theta$ is connected and the homomorphism $\pi_1(\Theta) \to W(M)$ induced by the $S$-labeling of $\Theta$  is surjective. \label{cond:pi1surj}
			\end{customenum*}
			\begin{customenum}{$\mathbf{\mOmega}$}
				\item If $x \neq y$ are vertices of $F$ such that $p(x) = p(y)$ then $x$ and $y$ lie in $\hOmega_3$. \label{cond:Omega_vertex}
				\item Each edge $e$ of $F$ is in $\hOmega_3$. \label{cond:Omega_edge}
				\item $|E\mOmega|\leq 8(\chi(\Delta) - \chi(\bar\Delta) - \chi(\mOmega))$ \label{cond:Omega_complexity}
				\item If $\gamma$ is a reduced path of length at most $8$ in $\overline{\Delta \backslash \Ec}$ having both endpoints in $\mOmega$, then $\delta$ is contained in $\mOmega$. \label{cond:Omega_path} 
			\end{customenum}
			\begin{customenum*}{$\mathbf{\bar \Delta \ast}$}
				\item Each special path $\delta$ has as many edges as its image $\bar\delta$ in $\Theta$. \label{cond:inj_edges_sppaths}
			\end{customenum*}
			\begin{customenum*}{$\mathbf{M}$} \item $m_{st}\geq \Konst.2^{c_*}$ for each $s \neq t \in S$. \label{cond:M_large}
			\end{customenum*}
			\end{definition}
			
			In order to simplify notation we also introduce the condition
			\begin{customenum*}{$\mathbf{\mOmega}$}
				\item \changed{Conditions} \eqref{cond:Omega_vertex},\eqref{cond:Omega_edge},\eqref{cond:Omega_complexity} and \eqref{cond:Omega_path} hold. \label{cond:Omega}
			\end{customenum*}
			
			Some remarks are in order. 
			\begin{itemize}
				\item It is obvious from the way $\Theta$ is obtained from $\Gamma, \Delta, F$ and $p$ that $\chi(\Theta) = \chi(\Gamma) + \chi(\Delta) - \chi(F)$.
				\item Note that if $\Theta$ is connected, condition \eqref{cond:pi1surj} is independent of the basepoint, hence we do not need to specify it.
				\item While all edges of $F$ lie in $\hOmega_3$, there might be isolated vertices of $F$ outside~$\hOmega_3$.
			\end{itemize}
			
The following lemma clarifies the relationship between $\hOmega_3$ and $\mOmega_3$. Note that \changed{later on} the condition that $\Delta$ is folded will always be satisfied\changed{, see Lemma~\ref{lemma:Delta_folded}}.
			
\begin{lemma}\label{homotopyOmega3} Let $(\Dc,\pairOmega)$ be a marked decomposition satisfying condition \eqref{cond:Omega}. Then the following hold:

\begin{enumerate}
\item If $k\in\{3,4\}$ then \changed{for} each component $\mOmega_k^i$ of $\mOmega_k$ the inclusion of $\mOmega_k^i\cap \mOmega$ into $\mOmega_k^i$ is a homotopy equivalence.
\item If $\Delta$ is folded then for each component $\hOmega_3^i$ of $\hOmega_3$ the inclusion of $\hOmega_3^i\cap \hOmega$ into $\hOmega_3^i$ is a homotopy equivalence.
\item If $\Delta$ is folded, $x\neq y\in F$ are vertices such that $p(x)=p(y)$ and $\gamma_x,\gamma_y\subset \Delta$ are shortest paths from $\hOmega$ to $x$, respectively $y$, then $\bar\gamma_x=\bar\gamma_y$. 
\item If $\Delta$ is folded then $\hOmega_3$ is the preimage of $\mOmega_3$ in $\Delta\backslash \Ec$.
\end{enumerate}
\end{lemma}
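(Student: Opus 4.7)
The plan is to prove all four parts together, starting from a single structural fact derivable from $(\Omega 4)$: for each vertex $v$ of $\mOmega_k$ with $d := d(v,\mOmega) \leq k \leq 4$ there is a \emph{unique parent edge} at $v$, i.e., a unique edge at $v$ whose other endpoint has depth $d - 1$. The argument I would give is direct: two such edges $e_1 \neq e_2$ ending at $u_1, u_2$ at depth $d-1$ give, together with the (inductively unique) shortest paths $P_{u_i}$ from $u_i$ to $\mOmega$, the reduced concatenation $P_{u_1}^{-1} \cdot e_1^{-1} \cdot e_2 \cdot P_{u_2}$ of length $2d \leq 8$ with both endpoints in $\mOmega$. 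Condition $(\Omega 4)$ then forces $v \in \mOmega$, a contradiction. As a corollary the shortest path from $v$ to $\mOmega$ is unique whenever $d(v,\mOmega) \leq 4$.

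For part~(1) I would then show that every edge $e = uv$ of $\mOmega_k$ outside $\mOmega$ is the parent edge of its deeper endpoint, so that $\mOmega_k$ is obtained from $\mOmega$ by attaching trees at single vertices. A would-be ``same-level'' edge at depth $d$ would produce the reduced path $P_u^{-1} \cdot e \cdot P_v$ of length $2d + 1$ from $\mOmega$ to $\mOmega$, contradicting $(\Omega 4)$ for $d \leq 3$. The limiting case $k = d = 4$ requires a short extra step, pushing the pathological edge inward by one parent at each side to bring the total length below $8$. The resulting tree-attached structure gives the required componentwise deformation retraction of $\mOmega_k$ onto $\mOmega \cap \mOmega_k$ along parent paths.

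For parts (4), (3), (2), I would use folding of $\Delta$. I would prove part~(4) first: the inclusion $\overline{\hOmega_k} \subset \mOmega_k$ is immediate, and conversely, given $v \in \Delta \backslash \Ec$ with $\bar v \in \mOmega_3$, I would lift the unique shortest $\bar\Delta$-path from $\bar v$ to $\mOmega$ edge-by-edge. Folding of $\Delta$ provides at each vertex of $\Delta$ a unique edge carrying a given label, so the lift exists and terminates in $\hOmega$, placing $v$ in $\hOmega_3$. Part~(3) is then immediate: $\bar\gamma_x$ and $\bar\gamma_y$ are both shortest $\bar\Delta$-paths of length $\leq 3$ from the common vertex $p(x) = p(y)$ to $\mOmega$, so they coincide by uniqueness. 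Part~(2) mirrors part~(1) in $\Delta \backslash \Ec$: two putative parent edges at $v \in \hOmega_3$ either project to distinct edges in $\bar\Delta$, in which case the length-$\leq 6$ projected path falls to $(\Omega 4)$ (placing $\bar v \in \mOmega$, hence $v \in \hOmega$ by part~(4)), or to a single edge in $\bar\Delta$, in which case the two edges at $v$ share a label and contradict folding of $\Delta$.

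The technical kernel is the boundary case $k = d = 4$ of part~(1), where the natural length estimate $2d + 1 = 9$ lies just beyond the $(\Omega 4)$ threshold of $8$; I expect to circumvent it by a local analysis that shifts the pathological edge one parent step inward so that the effective length reduces to $\leq 8$, placing the argument back inside the scope of $(\Omega 4)$.
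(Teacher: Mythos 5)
Your treatment of part (1) is essentially the paper's argument made explicit: the paper also produces a reduced path of length at most $8$ with both endpoints in $\mOmega$ and interior outside $\mOmega$ and feeds it to \eqref{cond:Omega_path}; your parent-edge induction is a clean way to justify the length bound. However, the step you yourself single out as the technical kernel --- an edge joining two vertices at depth $4$ when $k=4$ --- cannot be repaired the way you propose. Replacing each endpoint by its parent does not shorten anything: the offending circuit still has length $9$, and indeed a $9$-cycle through a single vertex of $\mOmega$ satisfies \eqref{cond:Omega_path} vacuously while violating the homotopy-equivalence conclusion, so no argument from \eqref{cond:Omega_path} alone can dispose of such an edge. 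The case is harmless only because such an edge does not lie in the $4$-neighborhood in the first place (its midpoint is at distance $4+\tfrac12$ from $\mOmega$); you should observe this rather than promise a local analysis that cannot succeed.

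The more serious gap is in part (4). Foldedness of $\Delta$ gives \emph{at most} one edge with a given label at a vertex $v$ of $\Delta$, not at least one; and even when such an edge exists, it need not project to the edge of the geodesic you are lifting, because $\bar\Delta$ need not be folded at this stage and $\bar v$ may have several preimages in $\Delta$, so the lift of the next edge may be attached to the wrong preimage. Your edge-by-edge lifting therefore does not go through as stated. The missing ingredient is condition \eqref{cond:Omega_vertex}, which you never invoke in part (4): it is what allows one to argue (as the paper does, by a minimality induction on $\dist_{\bar\Delta}(\bar v,\mOmega)$) that a vertex $v\notin\hOmega_3$ is the \emph{unique} preimage of $\bar v$, so that the relevant edge does lift at $v$; parts (1) and (2) then handle the complementary case $v\in\hOmega_3$. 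Relatedly, your part (3) treats $\bar\gamma_x$ and $\bar\gamma_y$ as geodesics in $\bar\Delta$ without justification --- a priori $\dist_{\bar\Delta}(\bar x,\mOmega)$ could be smaller than $\dist_\Delta(x,\hOmega)$ --- whereas the paper avoids this by reducing $\bar\gamma_x\cdot\bar\gamma_y^{-1}$ and applying \eqref{cond:Omega_path} directly. Part (2) is fine, provided you note that non-reducedness of the projected path at \emph{any} vertex, not only at $v$, contradicts foldedness of $\Delta$.
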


Note that saying that the inclusion of some connected graph $A$ into some  graph $B$ is a homotopy equivalence is the same as saying that $B$ is obtained from $A$ by attaching trees  along single vertices.

\begin{proof}
(1) The conclusion for $k=4$ clearly implies the conclusion for $k=3$ thus we restrict ourselves to the case $k=4$. Suppose that the conclusion does not hold for some component $\mOmega_4^i$. Then there exists a reduced path $\bar\gamma$ in $\mOmega_4^i$ such that $\alpha(\bar\gamma),\omega(\bar\gamma)\in\mOmega_4^ i\cap\mOmega$ and that all other vertices and all edges of $\bar\gamma$ lie in $\mOmega_4^i\backslash \mOmega$. As $\bar\gamma$ lies in $\mOmega_4$ we can moreover choose $\bar\gamma$ to be of length at most $8$. Condition \eqref{cond:Omega_path} then implies that $\bar\gamma$  lies in $\mOmega$, a contradiction.

(2) Suppose that the conclusion does not hold for some component $\hOmega_3^i$. Then there exists a reduced path $\gamma$ in $\hOmega_3^i$ such that $\alpha(\gamma),\omega(\gamma)\in\hOmega$ and that all other vertices and all edges of $\gamma$ lie in $\hOmega_3^i\backslash \hOmega$. As $\gamma$ lies in $\hOmega_3$ we can moreover choose $\gamma$ to be of length at most $6$. Let $\gamma'$ be the path obtained from $\bar\gamma$ by reduction, Condition \eqref{cond:Omega_path} then implies that $\gamma'$  lies in $\mOmega$ which the implies that $\gamma'$ is reduced to a point. This however implies that the label of $\gamma$ was not reduced contradicting the assumption that $\Delta$ is folded.

To see (3) note first that $x$ and $y$ lie in $\hOmega_3$ by condition \eqref{cond:Omega_vertex}. Thus $\gamma_x$ and $\gamma_y$ exist and of length at most $3$. Let now $\bar\gamma$ be the path obtained from $\bar\gamma_x\cdot\bar\gamma_y^{-1}$ by reduction. $\bar\gamma$ is a reduced path with endpoints in $\mOmega$ of length at most six and is therefore contained in $\mOmega$ by condition \eqref{cond:Omega_path}. As no edge of $\gamma_x$ and $\gamma_y$ lies $\hOmega$ it also follows that no edge of $\bar\gamma$ lies in $\mOmega$. Thus $\bar\gamma$ is reduced to a single point, i.e.\ $\bar\gamma_x=\bar\gamma_y$.

The proof of (4) is by contradiction. Thus we assume that there exists some vertex $v\in V\Delta$ such that $\dist_{\bar \Delta}(\bar v,\mOmega)\le 3$ and $\dist_{\bar \Delta}(\bar v,\mOmega)<\dist_{\Delta}(v,\hOmega)$. Among all such $v$ we choose $v$ such that $\dist_{\bar\Delta}(\bar v,\mOmega)$ is minimal. Let $\gamma=e_1,\ldots ,e_k$ be the shortest path from $\mOmega$ to $\bar v$, clearly $1\le k\le 3$. If $v\in\hOmega_3$ then it follows from (1) and (2) that $\dist_{\bar \Delta}(\bar v,\mOmega)=\dist_{\Delta}(v,\hOmega)$ as the path $\gamma_v$ from $\hOmega$ to $v$ maps injectively to a path from $\mOmega$ to $\bar v$. Thus $v\notin\hOmega_3$. It then follows from condition \eqref{cond:Omega_vertex} that $v$ is the unique lift of $\bar v$, in particular there exists a unique edge $f_k\in\Delta\backslash\Ec$ such that $\bar f_k=e_k$. It follows from our minimality assumption on $v$ that $\dist_{\Delta}(\alpha(f_k),\hOmega)=\dist_{\bar\Delta}(\alpha(e_k),\mOmega)=k-1$. This however implies that $$\dist_{\Delta}(v,\hOmega)=\dist_{\Delta}(v,\alpha(f_k))+\dist_{\Delta}(\alpha(f_k),\hOmega)=(k-1)+1=k=\dist_{\bar\Delta}(\bar v,\mOmega)$$ which contradicts the above assumption.
\end{proof}

			\begin{lemma} \label{lemma:O3} Let $(\Dc,\pairOmega)$ be a tame marked decomposition. Then
				\begin{enumerate}
					\item $\hOmega_3$ does not cover all inner edges of a special path. \label{lemma:O3_small}
					\item $\hOmega_3$ and $F$ are forests. \label{lemma:O3_forest}
				\end{enumerate}
			\end{lemma}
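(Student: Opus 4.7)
The plan is to prove (1) by contradiction and deduce (2) from it. For (1), suppose $\hOmega_3$ covers every inner edge of a special path $\delta$. By conditions \eqref{cond:Delta_relator} and \eqref{cond:M_large}, $L:=|\delta|=m_{st}-1\ge\Konst\cdot 2^{c_*}-1$. Projecting to $\bar\Delta$: since $\overline{\hOmega_3}\subseteq\mOmega_3$ and $\bar\delta$ is embedded of the same length by \eqref{cond:inj_edges_sppaths}, each inner vertex $\bar v_j$ of $\bar\delta$ lies in $\mOmega_3$. By Lemma~\ref{homotopyOmega3}(1), $\mOmega_3\setminus\mOmega$ is a disjoint union of trees, each attached at a single vertex of $\mOmega$, and each such tree has all its vertices within distance $3$ of its attaching vertex.

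Now for a maximal run $\bar v_a,\ldots,\bar v_b$ of inner vertices off $\mOmega$, I argue inductively that all the $\bar v_i$ and the connecting edges $\bar e_a,\ldots,\bar e_{b+1}$ lie in a single such tree $T$. The attaching vertex of $T$ would then simultaneously be $\bar v_{a-1}$ and $\bar v_{b+1}$ whenever both exist, contradicting uniqueness of the attaching vertex together with embeddedness of $\bar\delta$. Hence no interior run exists, and any boundary run has length at most $7$ (bounded by the tree diameter), so $\bar v_j\in\mOmega$ for all $j\in\{8,\ldots,L-8\}$. Condition \eqref{cond:Omega_path} applied to each length-$1$ reduced path $\bar e_j$ ($9\le j\le L-8$) then shows $\bar e_j\in\mOmega$, hence $|E\mOmega|\ge L-16\ge\Konst\cdot 2^{c_*}-17$. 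On the other hand, \eqref{cond:Omega_complexity} gives $|E\mOmega|\le 8(\chi(\Delta)-\chi(\bar\Delta)-\chi(\mOmega))$; using $\chi(\Delta)-\chi(\bar\Delta)=\chi(F)-\chi(p(F))$ together with $\chi(\Theta)=\chi(\Gamma)+\chi(\Delta)-\chi(F)$ and $\betti(\Theta)=c_*+|\Ec|-\cc(\Delta)$, the right-hand side is bounded by a polynomial in $c_*$, which for $\Konst=6$ contradicts the exponential lower bound. The main obstacle is precisely this Euler-characteristic bookkeeping needed to confirm the upper bound on $|E\mOmega|$ grows only polynomially in $c_*$.

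For (2), condition \eqref{cond:Omega_edge} gives $F\subseteq\hOmega_3$ up to isolated vertices, so it suffices to show that $\hOmega_3$ is a forest. Any simple cycle $\gamma$ in $\hOmega_3\subseteq\Delta\setminus\Ec$ uses some edge lying on a special path, and such an edge has an endpoint at an inner vertex of its path, which has valence exactly $2$ in $\Delta\setminus\Ec$ by \eqref{cond:sppaths_emb_or_closed}. Thus $\gamma$ uses both adjacent edges; propagating along the path, $\gamma$ contains all inner edges of that special path, contradicting (1).
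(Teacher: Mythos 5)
Your overall strategy is exactly the paper's: prove (1) by contradiction using Lemma~\ref{homotopyOmega3}~(1) and conditions \eqref{cond:inj_edges_sppaths}, \eqref{cond:Omega_path}, \eqref{cond:Omega_complexity} to force $|E\mOmega|\gtrsim l(\delta)$, then deduce (2) from (1) by observing that any cycle in $\Delta\backslash\Ec$ must sweep up all inner edges of some special path. However, there is a genuine gap at the crux of (1): you assert, but do not verify, that the right-hand side of \eqref{cond:Omega_complexity} is polynomially (in fact it must be at most linearly) bounded in $c_*$, and you explicitly name this as "the main obstacle." Without that bound the contradiction with \eqref{cond:M_large} does not materialize, so the proof is incomplete precisely where the content is. Your proposed route via $\chi(\Delta)-\chi(\bar\Delta)=\chi(F)-\chi(p(F))$ is also not the easiest way to close it: controlling $\cc(F)-\cc(p(F))$ and $\betti(p(F))-\betti(F)$ in terms of $c_*$ requires further work. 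The paper instead uses that $\Ec$ maps injectively to $\Theta$ (since $F$ contains no loop edges) to rewrite $\chi(\Delta)-\chi(\bar\Delta)=\chi(\Delta\backslash\Ec)-\chi(\overline{\Delta\backslash\Ec})$, expands all three Euler characteristics as $\cc-\betti$, discards the nonpositive term $-\betti(\Delta\backslash\Ec)$, and bounds both $\betti(\overline{\Delta\backslash\Ec})$ and $\betti(\mOmega)$ by $\betti(\Theta)-|\Ec|$; this yields $|E\mOmega|\le 16(c_*-1)-8$, hence $l(\delta)\le 16(c_*-1)$, which is incompatible with $l(\delta)=m_{st}-1\ge \Konst\cdot 2^{c_*}-1$ for every integer $c_*\ge 1$. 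You should reproduce some such explicit chain of inequalities; the lemma is false without the exponential hypothesis, so the bookkeeping cannot be waved away.

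Two smaller points. First, in (2) your claim that an edge of a special path always has an endpoint of valence exactly $2$ is not literally correct: the first and last inner vertices of a special path may have higher valence, since by \eqref{cond:sppaths_emb_or_closed} extremal edges can be shared with other special paths and by \eqref{cond:Delta_length_sppath} nothing prevents several second edges from meeting there. The propagation argument still works, but it must start from a \emph{deep} inner vertex (one whose two adjacent edges are both inner edges); such vertices exist and have valence $2$ because special paths have length at least $5$. Second, your part (1) re-derives the tree structure of $\mOmega_3\backslash\mOmega$ by hand; this is exactly the content of Lemma~\ref{homotopyOmega3}~(1) and can simply be cited, as the paper does, to conclude that $\mOmega$ covers all but a bounded initial and terminal portion of $\bar\delta$.
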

			\begin{proof} Statement \eqref{lemma:O3_forest} follows easily from statement \eqref{lemma:O3_small}. Indeed any closed path in $\Delta \backslash \Ec$ must contain all inner edges of a special path, so $\hOmega_3$ is a forest. By condition \eqref{cond:Omega_edge} all edges of $F$ are in $\hOmega_3$ so $F$ is a forest as well.
			
				\changed{Suppose that \eqref{lemma:O3_small} does not hold. Then there is a special path $\delta$ whose inner edges are all in $\hOmega_3$. 
By Lemma~\ref{homotopyOmega3} (1) $\mOmega$ covers all but possibly the first 3 and last 3 inner edges of $\bar \delta$. Thus condition \eqref{cond:inj_edges_sppaths} implies that $l(\delta)\le|E\mOmega| +8$.} 
				
				Since $F$ does not contain loop edges by definition, the set of loop edges $\Ec \subset \Delta$ is mapped injectively to $\Theta$, so that 
				\[ \chi(\Delta \backslash \Ec) - \chi(\overline{\Delta\backslash \Ec}) = \chi(\Delta) - |\Ec| - \chi(\bar \Delta) + |\Ec| = \chi(\Delta) - \chi(\bar\Delta).\]
				Combining this last observation with condition \eqref{cond:Omega_complexity} we have 
				\begin{align*}
					|E\mOmega| & \leq 8\left(\chi(\Delta) - \chi(\bar\Delta) - \chi(\mOmega) \right) = 8\left(\chi(\Delta \backslash \Ec) - \chi(\overline{\Delta\backslash \Ec}) - \chi(\mOmega)\right) \\
												& = 8\left(\left(\cc(\Delta \backslash \Ec) - \betti(\Delta \backslash \Ec)\right) + \left(\betti(\overline{\Delta\backslash \Ec}) - \cc(\overline{\Delta\backslash \Ec})\right)  + \left(\betti(\mOmega) - \cc(\mOmega)\right)\right) \\
												& \leq 8\left(\cc(\Delta \backslash \Ec) - 0 + \betti(\overline{\Delta\backslash \Ec}) - 1  + \betti(\mOmega) - 1\right)
				\end{align*}
				In the last inequality we assumed that $\cc(\mOmega)>0$ (and in particular ${\cc(\overline{\Delta \backslash \Ec})>0}$) as otherwise $\mOmega$ is empty and Lemma \ref{lemma:O3} is trivial.
				Note that $\cc(\Delta \backslash \Ec) = \cc(\Delta)$ and that both $\betti(\overline{\Delta\backslash \Ec})$ and $\betti(\mOmega)$ are smaller than $\betti(\Theta) - |\Ec|$ so
				\[ |E\mOmega| \leq 8\cc(\Delta) + 16(\betti(\Theta) - |\Ec| - 1) \leq 16(\underbrace{\cc(\Delta) + \betti(\Theta) - |\Ec|}_{=c_*} - 1) - 8\]
				and
				\[ \changed{l(\delta)  \leq |E\mOmega| + 8} \leq 16(c_*-1) - 8 + 8.\]
				On the other hand $l(\delta)=m_{st} - 1$ for some $s \neq t$, and by condition \eqref{cond:M_large} we have
				\[ l(\delta) = m_{st} - 1 \geq \Konst.2^{c_*} -1 \]
				In conclusion we have $16(c_*-1) \geq \Konst.2^{c_*} -1$ which is impossible for any integer $c_* \geq 1$. 
			\end{proof}

			
						
		\subsection{Complexity}
			Fix a decomposition $\Dc$. Recall from Section \ref{sec:stall_folds} that $\Theta^f$ denotes the unique folded graph obtained from $\Theta$ by a sequence of folds. Let $\Delta^f$ be the image of $\bar \Delta$ in $\Theta^f$.
			
			The \emph{complexity} of a decomposition $\Dc$ is the tuple $(c_1,c_2,\ldots, c_7) \in \Nb^7$ were
			\begin{itemize}
				\item (primary complexity)
				\begin{itemize}
					\item $c_1 = \betti(\Theta) - |\Sppaths|$
					\item $c_2 = \betti(\Theta) + \chi(\Delta)$
					\item $c_3 = |E(\Theta^f \backslash \Delta^f)|$
					\item $c_4 = |E\Theta^f|$ 
					\item $c_5 = |E\Delta|$
				\end{itemize}
				\item (secondary complexity)
				\begin{itemize}
					\item $c_6 = |E\Theta|$
					\item $c_7 = |E(\hOmega_3 \backslash F)|$
				\end{itemize}
			\end{itemize}
			Complexities are ordered according to the lexicographic order. The first five components form the \emph{primary complexity} while the last two components are called the \emph{secondary complexity}.
			
			\begin{remark} $c_1$ and $c_2$ may be negative for general decompositions, but they are non-negative provided $F$ is a forest, thus by Lemma~\ref{lemma:exist_minimalcpx_decomp} the complexities $c_1$ and $c_2$ are non-negative provided that $(\Dc,\pairOmega)$ is a tame marked decomposition.
			\end{remark}
			
			The following theorem is the main technical result of this article. It immediately implies Theorem \ref{thm:main}.
			\begin{theorem}\label{thm:reformulation}
				Let $(\Dc,\pairOmega)$ be a tame marked decomposition. Then $c_1 \geq n$.
			\end{theorem}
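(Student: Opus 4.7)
The plan is to proceed by contradiction on minimal complexity. Suppose some tame marked decomposition has $c_1 < n$, and fix such a $(\Dc,\pairOmega)$ of minimal complexity among all such putative counterexamples. I will exhibit a new tame marked decomposition of strictly smaller complexity, contradicting minimality.

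First I would pass to the folded graph $\Theta^f$ and exploit condition \eqref{cond:pi1surj} together with the hypothesis $c_1 < n$ to find some $u \in S$ that is not realised as a loop edge at the basepoint of $\Theta^f$. The surjection $\pi_1(\Theta^f) \to W(M)$ then provides a shortest closed path $p_u$ at the basepoint with $\ell(p_u) =_W u$; because $\Theta^f$ is folded, Lemma~\ref{lemma:folded_graph}~(2) ensures that $\ell(p_u)$ is already a reduced word. The ``almost whole relator'' lemma referenced in the outline (\ref{lemma:almost_whole_rel}) then produces a subpath $\gamma$ of $p_u$ whose label is a subword of some Coxeter relator $(st)^{m_{st}}$ of length $m_{st}-3$, so that applying the relator yields a short alternative crossing of $\gamma$.

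Next I would analyse the image of $\gamma$ in $\Theta^f$, which by foldedness falls into a short list of shapes (Figure~\ref{fig:shapes}). If $\gamma$ is embedded, then the length bound on $\gamma$ combined with condition \eqref{cond:M_large} forces the existence of a length-$4$ subpath $\eta$ of $\gamma$ whose inner vertices all have degree $2$ in $\Theta$, and an AO-move replacing $\eta$ by the complementary $3$-letter path reduces the secondary complexity $c_6 = |E\Theta|$ while preserving $c_1,\ldots,c_5$. If the image of $\gamma$ is a loop edge concatenated with a long embedded path, I would instead translate the loop along that path, adding a new loop edge at the far endpoint and declaring the long path to be a new special path; this keeps $c_1$ unchanged (since both $\betti(\Theta)$ and $|\Sppaths|$ grow by $1$) but forces a strict decrease in one of $c_2,\ldots,c_5$ through the change in $\Delta$. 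The remaining shapes of Figure~\ref{fig:shapes} are handled by analogous fold or translate moves, each engineered to strictly lower the lexicographic complexity, and in each case a careful choice of new marking $\pairOmega'$ restores tameness using Lemma~\ref{homotopyOmega3} and Lemma~\ref{lemma:O3}.

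The main obstacle is verifying that $(\Dc',\pairOmega')$ remains tame after each move. Condition \eqref{cond:Omega_complexity}, which ties $|E\mOmega|$ to $\chi(\Delta)-\chi(\bar\Delta)-\chi(\mOmega)$, requires the new marking to be chosen sharply, and condition \eqref{cond:Delta_length_sppath} demanding every special path have length at least $5$ must survive the ``halving'' phenomenon discussed in the remark after Figure~\ref{fig:non-example}. This halving is where the exponential hypothesis \eqref{cond:M_large} is essential: a special path of type $\{s,t\}$ can be effectively halved by a translation, and to keep each iteratively created special path of length at least $5$ through the at most $c_*$ halvings that can occur, one needs precisely $m_{st} \gtrsim 2^{c_*}$, matching the hypothesis in Theorem~\ref{thm:main}. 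The detailed case analysis is deferred to Sections~\ref{sec:folding_sequence} and~\ref{sec:folded_graph}.
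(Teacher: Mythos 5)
Your global strategy is the paper's own: pass to a tame marked decomposition of minimal complexity and show that any failure of the graph to be the standard rose $\Theta_S$ permits a complexity-reducing fold, AO-move, or translation. That much matches Sections~\ref{sec:folding_sequence} and~\ref{sec:folded_graph}. However, as a proof of Theorem~\ref{thm:reformulation} itself the proposal has a concrete gap in the endgame, in two places.

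First, the step ``exploit condition~\eqref{cond:pi1surj} together with $c_1<n$ to find some $u\in S$ not realised as a loop edge at the basepoint'' is not justified. Since $c_1=\betti(\Theta)-|\Sppaths|$, having every $u\in S$ appear as a loop edge at the basepoint only gives $\betti(\Theta)\ge n$, which is perfectly compatible with $c_1<n$ when $|\Sppaths|>0$; so the contrapositive you rely on to launch the argument does not hold as stated. Second, and for the same reason, even once all simplification moves are exhausted you must still convert ``the graph is $\Theta_S$'' into ``$c_1\ge n$'', and this requires killing the $-|\Sppaths|$ term. The paper does this via condition~\eqref{cond:inj_edges_sppaths}: a special path has length at least $5$, hence contains two edges with the same label, whereas $\Theta_S$ has exactly one edge per label, so $\Sppaths=\emptyset$ and $c_1=\betti(\Theta_S)-0=n$. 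Note also that this accounting needs $\Theta$ itself, not merely $\Theta^f$, to equal $\Theta_S$; that is why Proposition~\ref{prop:main_tech} (minimal complexity implies $\Theta$ is folded) is established first and why Proposition~\ref{proposition:Gammaf_is_R} is stated for $\Theta$ rather than for its folded quotient. Without this final step your contradiction scheme never actually produces the inequality $c_1\ge n$.
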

			Note that if $X$ is a generating set with $|X| = \rk(W(M))$, then there is a tame marked decomposition $(\Dc_X,\emptyset)$ with $c_1=\rk(W(M))$. Indeed, we choose $\Delta_X$ to be empty and $\Gamma_X = \Theta_X$ the wedge of circles described in Section \ref{sec:foldings}, see Figure \ref{fig:wedge}. Theorem \ref{thm:reformulation} applies and $\rk(W(M)) \geq n$. Thus Theorem \ref{thm:reformulation} does indeed imply Theorem \ref{thm:main}. \changedm{Theorem \ref{thm:reformulation} in turn is a consequence of Proposition \ref{proposition:Gammaf_is_R} as explained at the end of Section~\ref{sec:folded_graph}.}
			
		\subsection{Unfolding}
			For the remainder of this section, $\Dc=(M,\Gamma,\Delta,F,p,\Theta)$ is a decomposition of some connected labeled graph $\Theta$, $\Gamma$ is nonempty and $F$ is a forest. We discuss three ways to unfold $\Theta$ by changing $F$ and $\Gamma$ without changing the primary complexity. \changed{In each case we obtain a new decomposition $\Dc'=(M',\Gamma',\Delta',F',p',\Theta')$ where $M'=M$ and $\Delta'=\Delta$. We will not describe $\Theta'$ as it is determined by $\Gamma'$, $F'$ and $p'$.}
			
			
			\begin{unfolding}[See Figure \ref{fig:cc_vertex}] \label{unfold:cc_vertex} Choose a subset $F'$ of $F$ consisting of a single vertex in each connected component of $F$. \changed{Let $\Gamma' = \Gamma$ and let $p'$ be the restriction of $p$ to $F'$}. 
			\end{unfolding}
			\begin{figure}[htbp]
				\center \input{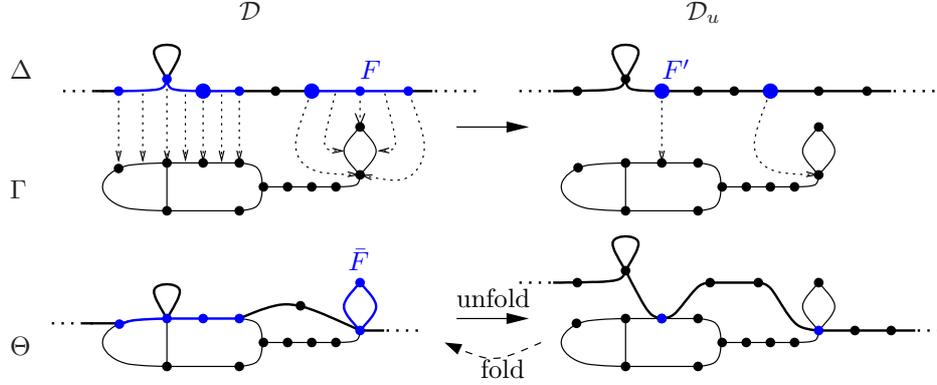}
				\caption{Unfolding of type \ref{unfold:cc_vertex}.}
				\label{fig:cc_vertex}
			\end{figure}
			\begin{unfolding}[See Figure \ref{fig:remove_cc}] \label{unfold:remove_cc} Suppose $T$ and $T'$ are distinct connected components of $F$ in the same connected component of $\Delta$. Let $x \in T$ and $y \in T'$. 
Choose a path $\gamma$ in $\Delta$ from $x$ to $y$. Let $\gamma'$ be a new path having the same length and labeling as $\gamma$. Glue $\gamma'$ to $\Gamma$ by identifying $\alpha(\gamma')$ with $p(x)$ and $\omega(\gamma')$ with $p(y)$. Let $\Gamma' = \Gamma \cup \gamma$, \changed{$F'=F\backslash T'$ and $p'$ be the restriction of $p$ to $F'$}.
			\end{unfolding}
			\begin{figure}[htbp]
				\center \input{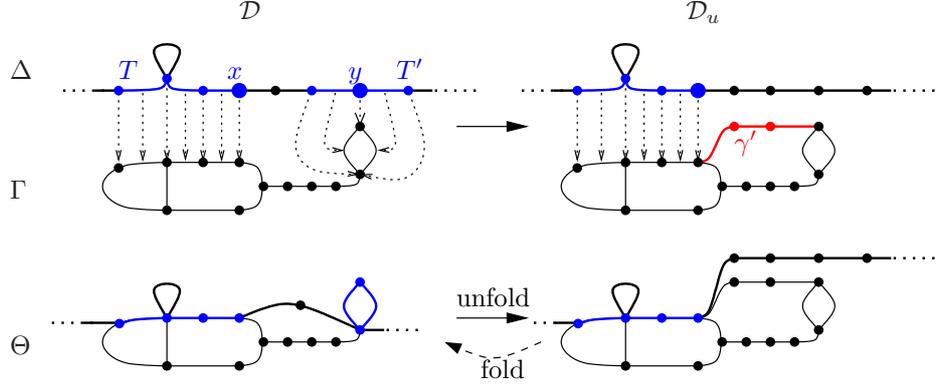}
				\caption{Unfolding of type \ref{unfold:remove_cc}.}
				\label{fig:remove_cc}
			\end{figure}
			\begin{unfolding}[See Figure \ref{fig:artificial_segment}] \label{unfold:artificial_segment} Suppose $v$ is an isolated vertex of $F$. Let $e$ be an edge adjacent to $v$. Let $\gamma$ be a new path of length two with both edge labeled by $\ell(e)$. Let $\Gamma'$ be obtained from $\Gamma \cup \gamma$ by gluing $\alpha(\gamma)$ to $p(v)$ \changed{and $F'=F$}. Define the map $p':F' \to \Gamma'$ as $p'(x) = p(x)$ if $x \neq v$ and $p'(v) = \omega(\gamma)$.
			\end{unfolding}
			\begin{figure}[htbp]
				\center \input{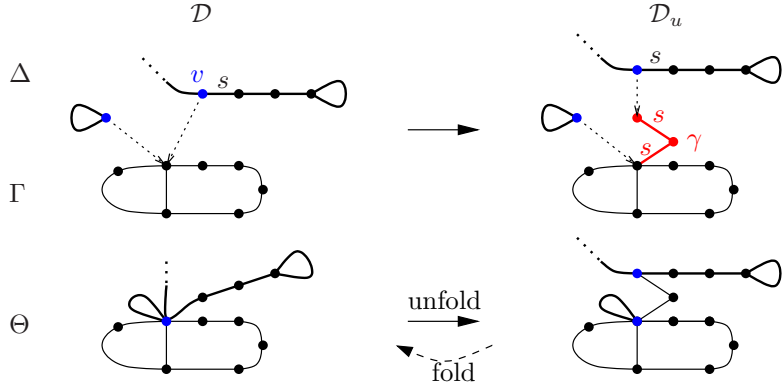}
				\caption{Unfolding of type \ref{unfold:artificial_segment}.}
				\label{fig:artificial_segment}
			\end{figure}
			\begin{remark} After an \changed{unfolding} of type \ref{unfold:artificial_segment} at an isolated vertex $v$ of $F$ there is no other vertex $v' \in F'$ such that $p'(v) = p'(v')$. Therefore, after using unfolding of type \ref{unfold:cc_vertex} to each connected component of $F$ followed by \changed{unfolding} of type \ref{unfold:artificial_segment} on each resulting vertex, \changed{condition \eqref{cond:Omega} is satisfied trivially. It follows in particular from the following lemma that $(\Dc',\emptyset)$ is tame if $(\Dc,\Omega)$ is tame for some $\Omega$.}
			\end{remark}
			\begin{lemma} \label{lemma:unfold_same_cpx}
				Suppose $F$ is a forest and $\Theta$ is connected. Then the primary complexity does not change when performing \changed{unfoldings} of type \ref{unfold:cc_vertex}, \ref{unfold:remove_cc} or \ref{unfold:artificial_segment}. Moreover, conditions \eqref{cond:pi1surj}, \eqref{cond:inj_edges_sppaths} and \eqref{cond:M_large} are unaffected by \changed{unfoldings} of type \ref{unfold:cc_vertex}, \ref{unfold:remove_cc} or \ref{unfold:artificial_segment}.
			\end{lemma}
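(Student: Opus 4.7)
The plan is to handle each of the three unfoldings in parallel: show that (i) $\Theta'$ is connected with $\chi(\Theta')=\chi(\Theta)$, (ii) $\Theta'$ folds to $\Theta$ through a sequence of Stallings folds and in such a way that the induced map $\Delta\to\Theta'\to\Theta^f$ coincides with $\Delta\to\Theta\to\Theta^f$, and (iii) conclude that each primary complexity entry as well as the three listed conditions are preserved.

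Since $M'=M$ and $\Delta'=\Delta$ in every case, the quantities $|E\Delta|$, $|\Sppaths|$, $\chi(\Delta)$, $\cc(\Delta)$ and $|\Ec|$ are trivially preserved. Hence $c_5$ is unchanged, and the potential $c_*=\betti(\Theta)+\cc(\Delta)-|\Ec|$ will be unchanged once we know $\betti(\Theta)=\betti(\Theta')$, which yields condition \eqref{cond:M_large}. Using the formula $\chi(\Theta)=\chi(\Gamma)+\chi(\Delta)-\chi(F)$ (and the analogous one for $\Dc'$), I check directly in each case that the change in $\chi(\Gamma)$ and the change in $\chi(F)$ cancel: in Unfolding~\ref{unfold:cc_vertex}, $\Gamma'=\Gamma$ and $\chi(F')=\chi(F)$ because $F$ is a forest, so both sides agree; in Unfolding~\ref{unfold:remove_cc}, adding a segment $\gamma'$ of length $l$ with the two endpoints glued to $\Gamma$ decreases $\chi(\Gamma)$ by $1$, while removing the tree $T'$ from $F$ also decreases $\chi(F)$ by $1$; in Unfolding~\ref{unfold:artificial_segment}, attaching the length-two path $\gamma$ at one vertex does not change $\chi(\Gamma)$ and $F'=F$. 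Connectivity of $\Theta'$ is then verified by exhibiting, in each case, an explicit path through $\Theta'$ realizing each old identification of $F$ (via the added segment together with the remaining tree in $F'$), so combined with $\chi(\Theta)=\chi(\Theta')$ we obtain $\betti(\Theta)=\betti(\Theta')$. This settles $c_1$ and $c_2$.

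The crucial step is to exhibit, for each type, a sequence of Stallings folds $\Theta'\to\Theta$. For Unfolding~\ref{unfold:cc_vertex} this just iteratively identifies the $\Delta$-copy of each component $T$ of $F$ with its image $\bar T$ in $\Gamma$, starting at the preserved vertex and propagating along edges of $T$; since $p$ is label-preserving and $T$ is a tree, all identifications are legal folds. For Unfolding~\ref{unfold:remove_cc} I first fold $\gamma$ against $\gamma'$ starting at the shared endpoint $\alpha(\gamma)=\alpha(\gamma')=p(x)=x$ (they have equal labels by construction), which in particular identifies $\omega(\gamma)=y$ with $\omega(\gamma')=p(y)$, and then continue with a type-\ref{unfold:cc_vertex} style folding on $T'$ versus $\bar{T'}$. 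For Unfolding~\ref{unfold:artificial_segment} the sequence has exactly two folds: first the edge $e\in\Delta$ and the edge $e_2$ of $\gamma$, both adjacent to $v=\omega(\gamma)$ and both labeled $\ell(e)$, fold together, identifying the middle vertex of $\gamma$ with $\omega(e)$; second, at the merged vertex the edge $e_1$ of $\gamma$ and (the folded) edge are both labeled $\ell(e)$ with the same initial vertex, and folding them identifies $\omega(\gamma)$ with $p(v)$. A direct check shows the composition $\Delta\to\Theta'\to\Theta$ equals $\Delta\to\Theta$ on vertices and edges, so the map $\Delta\to\Theta^f$ is the same whether factored through $\Theta$ or $\Theta'$. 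By uniqueness of folded graphs (Lemma~\ref{lemma:folded_graph}\eqref{unique_folded}) we then get $(\Theta')^f=\Theta^f$ with matching $\Delta$-image, giving equality of $c_3$ and $c_4$.

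Finally, the three conditions: \eqref{cond:pi1surj} is preserved because Stallings folds preserve the represented subgroup (Lemma~\ref{lemma:fold_same_subgroup}), combined with the established connectivity of $\Theta'$; \eqref{cond:inj_edges_sppaths} is preserved because in each unfolding the map $\Delta\to\Theta'$ factors through $\Delta\to\Theta$ via additional identifications (concretely, $\Theta'\to\Theta$ is the fold sequence above), so the number of edges of any $\bar\delta\subset\Theta'$ is at least as large as that of $\bar\delta\subset\Theta$, and both are bounded above by $l(\delta)$; \eqref{cond:M_large} is immediate from the invariance of $c_*$ established in the previous paragraph. The main technical obstacle is verifying in Unfolding~\ref{unfold:artificial_segment} that the two-fold sequence really is available and really reproduces the original identification $v\mapsto p(v)$, but the explicit computation above shows this without ambiguity.
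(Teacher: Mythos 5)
Your proof is correct and follows essentially the same route as the paper: invariance of $\betti(\Theta)$ via the Euler characteristic identity $\chi(\Theta)=\chi(\Gamma)+\chi(\Delta)-\chi(F)$, plus the observation that $\Theta'$ folds onto $\Theta$ (hence $\Theta^f$ and $\Delta^f$ are unchanged), with the conditions preserved via Lemma~\ref{lemma:fold_same_subgroup}. You merely supply the explicit fold sequences and case-by-case $\chi$ bookkeeping that the paper leaves to the reader.
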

			
			\begin{proof}
				The complexities depend only on $\Delta$ (which remains unchanged), $\betti(\Theta)$, $\Theta^f$\changed{ and $\Delta^f$}. We claim that $\betti(\Theta')$ is the same as $\betti(\Theta)$ after any of the unfolding moves above. Indeed, recall that $\chi(\Theta) = \chi(\Delta) + \chi(\Gamma) - \chi(F)$. Since $F$ is a forest and $\Theta$ is connected we have $\betti(F)=0$ and $\cc(\Theta)=1$ so the last formula can be rewritten as 
				\[\betti(\Theta) = \betti(\Delta) -\cc(\Delta)+ \betti(\Gamma) - \cc(\Gamma) + \cc(F) +1.\]
				The fact that $\betti(\Theta)=\betti(\Theta')$ now easily follows by examining each unfolding.
				
				Moreover, $\Theta'$ folds onto $\Theta$ so the folded graph $\Theta^f$ remains unchanged\changed{, the same is clearly true for $\Delta^f$}.
				
				It is clear that $\Theta'$ is connected provided $\Theta$ is, and since $\Theta$ is obtained from $\Theta'$ by a sequence of folds \changed{the fact that condition~\eqref{cond:pi1surj} is preserved follows from Lemma~\ref{lemma:fold_same_subgroup}. The fact that conditions~\eqref{cond:inj_edges_sppaths} and \eqref{cond:M_large} are preserved is obvious}.
			\end{proof}
			Lemma \ref{lemma:unfold_same_cpx} allows for the following interpretation of $c_2$:
			\begin{remark}[Interpretation of $c_2$] \label{remark:interpret_c_2} 
				Suppose $F$ is a forest, $\Theta$ is connected and $\Gamma$ is nonempty. Suppose $\Dc'$ is obtained from $\Dc$ by performing unfoldings of type \ref{unfold:remove_cc} repeatedly so that $F'$ has exactly one connected component in each connected component of $\Delta'$, and therefore $\Gamma'$ is connected. Using the computation in the proof of Lemma \ref{lemma:unfold_same_cpx} we get
				\[ c_2' = \betti(\Theta') + \chi(\Delta') = \betti(\Gamma') + \cc(F') - \cc(\Gamma') +1 = \betti(\Gamma')+\cc(\Delta').\]
				By Lemma \ref{lemma:unfold_same_cpx} unfolding does not change primary complexity so that $c_2 = \betti(\Gamma')+\cc(\Delta')$. Note that the reasoning fails if $\Gamma$ (and thereby $F$) is empty, but the fact that $c_2 = \betti(\Gamma')+\cc(\Delta')$ is still valid.
				
				Note moreover that $c_*(\Dc) = c_2 + \betti(\Delta) - |\Ec| = c_2 + \betti(\Delta \backslash \Ec)$. In the example of Figure \ref{fig:interpret_c_2} we have $c_2(\Dc) = c_2(\Dc') = \betti(\Gamma') + \cc(\Delta') = \changed{4+3}$ and $c_*(\Dc) = c_2 + \betti(\Delta\backslash \Ec) =\changed{4+3}+1$.
			\end{remark}
			\begin{figure}[htb]
				\center \input{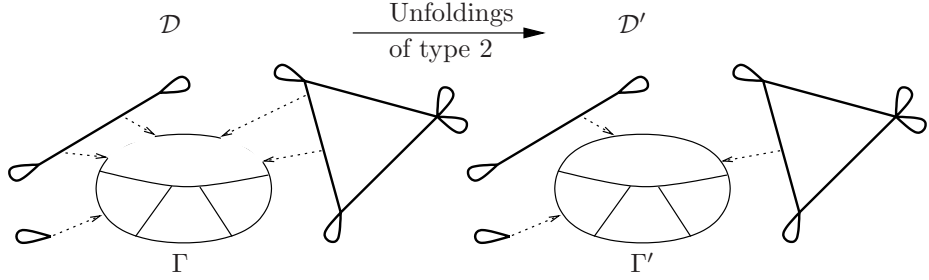}
				\caption{Interpretations of $c_2 = 4+3$ and $c_* = 4+3+1$.}
				\label{fig:interpret_c_2}
			\end{figure}
			
	\section{The folding sequence}
		\label{sec:folding_sequence}			
		
		\begin{lemma} \label{lemma:exist_minimalcpx_decomp} There is a tame marked decomposition of minimal complexity.
		\end{lemma}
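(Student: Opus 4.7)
The plan is very short: the statement is essentially a well-ordering argument, so the work lies in checking that the set of complexities of tame marked decompositions is a nonempty subset of $\mathbb{N}^7$.

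First I would exhibit one tame marked decomposition to show the class is nonempty. The natural choice is the one already mentioned in the paragraph following Theorem~\ref{thm:reformulation}: pick any generating set $X$ of $W(M)$, form the wedge of circles $\Theta_X = \Gamma_X$ as in Section~\ref{sec:foldings}, and take $\Delta_X = \emptyset$, $F = \emptyset$, $p$ the empty map, and $\mOmega = \emptyset$. All conditions $(\mOmega 1)$--$(\mOmega 4)$, $(\bar\Delta\ast)$ are vacuous, condition $(\Theta)$ holds because $X$ generates $W(M)$, and the constant $\Konst\cdot 2^{c_*}$ appearing in $(M)$ is bounded by $\Konst\cdot 2^n$ since $c_* = \betti(\Theta_X)-|\Ec|+\cc(\Delta)\le n$ (the latter by choosing $X$ of size $\le n$, for example $X=S$). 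Hence $(\mathcal{D}_X,\emptyset)$ is tame.

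Next I would observe that the complexity of any tame marked decomposition lies in $\mathbb{N}^7$. The components $c_3,c_4,c_5,c_6,c_7$ are cardinalities of edge sets and hence are non-negative integers. For $c_1$ and $c_2$, Lemma~\ref{lemma:O3}\eqref{lemma:O3_forest} guarantees that $F$ is a forest in every tame marked decomposition, so the remark immediately following the definition of the complexity applies: $c_1,c_2\ge 0$. Concretely, writing $\chi(\Theta)=\chi(\Gamma)+\chi(\Delta)-\chi(F)$ and using $\betti(F)=0$ together with the fact that each special path contributes one loop to $\betti(\Theta)$ at the $\Sppaths$-count, one verifies that both $c_1=\betti(\Theta)-|\Sppaths|$ and $c_2=\betti(\Theta)+\chi(\Delta)$ are non-negative integers.

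Finally I would invoke the fact that $\mathbb{N}^7$ with the lexicographic order is well-ordered: every nonempty subset has a least element. Applying this to the set of complexities of tame marked decompositions, which is nonempty by the first step, produces a tame marked decomposition whose complexity is minimal. There is no real obstacle here; the only subtle point is remembering that $c_1,c_2$ are $a~priori$ integers (possibly negative) and must be shown to be non-negative on the tame class, which is precisely what the $F$-is-a-forest conclusion of Lemma~\ref{lemma:O3} delivers.
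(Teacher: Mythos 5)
Your overall strategy is exactly the paper's: the complexity takes values in $\mathbb{N}^7$, which is well-ordered under the lexicographic order, so it suffices to check that all seven components are non-negative on the class of tame marked decompositions (your additional step of exhibiting a tame marked decomposition is harmless; the paper does this separately in the discussion following Theorem~\ref{thm:reformulation}). The components $c_3,\dots,c_7$ are indeed trivially non-negative, and the forest property of $F$ from Lemma~\ref{lemma:O3}~\eqref{lemma:O3_forest} is indeed the key input for $c_1$ and $c_2$.

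However, the one point that actually requires an argument is precisely the one you skate over. You justify $c_1,c_2\geq 0$ by citing the remark immediately after the definition of the complexity, but that remark is itself only an announcement whose justification is deferred to this very lemma, so the citation is circular. Your concrete substitute --- ``each special path contributes one loop to $\betti(\Theta)$'' --- is not quite right as stated: distinct special paths of $\Delta$ can overlap in $\Theta$ (distinct edges of $F$ may have the same image under $p$), so the inequality $\betti(\Theta)\geq|\Sppaths|$ cannot be read off directly in $\Theta$. The paper's route avoids this: first one shows $c_1\geq c_2$ purely inside $\Delta$, by deleting one inner edge from each special path (inner edges of distinct special paths are distinct by condition~\eqref{cond:sppaths_emb_or_closed}, and every component of the resulting graph still contains a loop edge, whence $\chi(\Delta)+|\Sppaths|\leq 0$ and so $\betti(\Theta)-|\Sppaths|\geq\betti(\Theta)+\chi(\Delta)$); then one shows $c_2\geq 0$ using that $F$ is a forest and $\Theta$ is connected, via the unfolding interpretation $c_2=\betti(\Gamma')+\cc(\Delta')$ of Remark~\ref{remark:interpret_c_2}. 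You should supply these two steps (or an equivalent computation) rather than appeal to the unproved remark.
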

		\begin{proof}	It is sufficient to show that the complexities $c_1, \ldots, c_7$ are non-negative integers for any tame marked decomposition. It is clear that $c_3, \ldots, c_7$ are always non-negative. Moreover, observe that $c_1 \geq c_2$. This is true since removing one inner edge from each special path of $\Delta$ yields a graph $\Delta'$ such that each connected component of $\Delta'$ contains a loop edge, and in particular $\chi(\Delta') \leq 0$. Thus 
			\begin{align*}
				c_1 &= \betti(\Theta) - |\Sppaths| \\
						&\geq \betti(\Theta) + \chi(\Delta') - |\Sppaths| = \betti(\Theta) + \chi(\Delta) = c_2.
			\end{align*}
			Finally if $(\Dc,\pairOmega)$ is a tame marked decomposition then $F$ is a forest by Lemma \ref{lemma:O3} \eqref{lemma:O3_forest} and $\Theta$ is connected, so the interpretation of $c_2$ following Lemma \ref{lemma:unfold_same_cpx} applies; in particular $c_2 \geq 0$.
		\end{proof}
		
		The following lemma makes essential use of the exponential bound in condition~\eqref{cond:M_large}.
		\begin{lemma}\label{lemma:Delta_folded}
			Let $(\Dc,\pairOmega)$ be a tame marked decomposition of minimal complexity. Then $\Delta$ is folded.
		\end{lemma}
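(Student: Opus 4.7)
The plan is a proof by contradiction: assume $(\Dc,\Omega)$ has minimal complexity yet $\Delta$ is not folded, and construct a tame marked decomposition of strictly smaller complexity. Choose edges $e_1\neq e_2$ of $\Delta$ with $\alpha(e_1)=\alpha(e_2)$ and $\ell(e_1)=\ell(e_2)$, and let $\Delta'$ be obtained from $\Delta$ by identifying $e_1$ with $e_2$. This Stallings fold in $\Delta$ descends to a Stallings fold in $\Theta$, and the data $F, p, \Omega$ can be transported across the quotient to yield a candidate $\Dc' = (M, \Gamma, \Delta', F', p', \Theta')$ together with a marking $\Omega'$.

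First I would classify the fold according to the position of $e_1$ and $e_2$ in the special-path structure of $\Delta$. The interesting case is the \emph{halving} phenomenon illustrated in Figure~\ref{fig:fold_Delta_half} and flagged in the remark on the examples of Section~\ref{sec:outline}: if $e_1$ and $e_2$ are the two extremal edges of a single special path $\delta$ of type $\{s,t\}$, then the alternating pattern of the relation $(st)^{m_{st}}$ forces the fold at the endpoints to propagate inward, so that $\delta$ collapses onto a path of length roughly $\lfloor (m_{st}-1)/2\rfloor$ and its two endpoint loop edges coalesce into a single loop. In every other configuration (two loops at a common vertex, a loop against an extremal edge of a special path, or extremal edges of two distinct special paths) the special-path collection of $\Delta$ can be transported directly to $\Delta'$ without shortening any individual path.

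Next I would verify the six tameness conditions for $(\Dc',\Omega')$. Condition~\eqref{cond:pi1surj} is preserved by Lemma~\ref{lemma:fold_same_subgroup}; condition~\eqref{cond:inj_edges_sppaths} is preserved because the identified pair of edges is accounted for in the new special-path collection; the $\Omega$-conditions \eqref{cond:Omega_vertex}--\eqref{cond:Omega_path} descend through the fold with the help of Lemma~\ref{homotopyOmega3}, possibly after enlarging $\Omega'$ slightly to absorb the change in $\chi(\bar\Delta')$ demanded by \eqref{cond:Omega_complexity}. Condition~\eqref{cond:Delta_length_sppath} is the delicate point and is exactly where the exponential lower bound enters: in the halving case the new special path has length at least $\lfloor (\Konst\cdot 2^{c_*}-1)/2\rfloor\geq 5$, so length admissibility is preserved. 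Condition~\eqref{cond:M_large} is preserved because the new potential $c_*'$ does not exceed $c_*$.

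Finally I would verify that the complexity strictly drops. The fold decreases $|E\Delta|$, so $c_5$ strictly decreases; since the induced fold on $\Theta$ leaves $\Theta^f$ and $\Delta^f$ unchanged, the complexities $c_3$ and $c_4$ are unaffected; and $\betti(\Theta)$ cannot grow under a Stallings fold. In the halving case the simultaneous drop of $|\Sppaths|$ by one and the loss of a loop edge compensate the change of $\chi(\Delta)$ so that $c_1$ and $c_2$ do not increase either. The main obstacle is precisely the halving case: one must juggle the compensating changes in $|\Sppaths|$, $|\Ec|$, $\chi(\Delta)$ and $\betti(\Theta)$ while simultaneously ensuring that the shortened special path still satisfies \eqref{cond:Delta_length_sppath} and \eqref{cond:Delta_relator} and that the enlarged marking $\Omega'$ still satisfies \eqref{cond:Omega_complexity}. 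This is the step responsible for the exponential bound on $m_{st}$ in condition~\eqref{cond:M_large}.
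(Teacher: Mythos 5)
Your overall strategy (fold the offending pair of edges, classify the fold by its position relative to the special-path structure, check tameness, show the complexity drops) is the same as the paper's, and your treatment of the easy configurations is essentially right. But there is a genuine gap in the halving case, and it is precisely the point the lemma turns on. When $e_1$ and $e_2$ are the two extremal edges of a simple closed special path $\delta$ of type $\{s,t\}$ (which forces $m_{st}$ to be even), the halved path $\delta'$ together with the coalesced loop edge reads the relation $(st)^{m_{st}/2}$, not $(st)^{m_{st}}$. So with $M'=M$, as you have it, your $\Delta'$ is not an $M$-special graph: condition \eqref{cond:Delta_relator} fails, and no amount of checking the length condition \eqref{cond:Delta_length_sppath} repairs this. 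The paper's resolution is to replace the Coxeter matrix by $M'$ with $m'_{st}=m_{st}/2$ and then to halve \emph{every} special path of type $\{s,t\}$ in $\Delta$, not just $\delta$ (otherwise the unhalved ones violate \eqref{cond:Delta_relator} for $M'$). Condition \eqref{cond:M_large} for the new decomposition then reads $m_{st}/2\geq \Konst\cdot 2^{c_*'}$, which holds because the extra loop edge gives $c_*'\leq c_*-1$; this, and not the requirement that special paths stay longer than $5$, is what consumes the exponential bound. You have misidentified the role of the hypothesis, and as written your candidate $\Dc'$ is not a decomposition in this case.

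A second, smaller error: in the configuration where a loop edge $e$ folds with the first edge $e'$ of a special path $\delta$, you cannot ``transport the special-path collection without shortening any individual path.'' Identifying $e$ with $e'$ turns $e'$ into a loop edge, and since the basepoint already carries loops labelled $s$ and $t$ while $\delta$ alternates in $s$ and $t$, the fold propagates: the entire path $\delta$ zips onto the wedge of the two loops and the special path disappears. The paper handles this by first unfolding the extremal edges of $\delta$ (increasing $c_5$ by at most $2$) and then collapsing $\delta$ completely (decreasing $c_5$ by at least $5$), so the complexity still drops, but the bookkeeping is quite different from the direct transport you describe. Finally, you omit the preliminary unfoldings of types 1--3 that reduce $F$ to one vertex per component of $\Delta$; without them the definition of $F'$ and $p'$ after the fold, and the verification of condition \eqref{cond:Omega}, are not as routine as your proposal suggests.
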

		\begin{proof}
			Recall from Lemma \ref{lemma:O3} \eqref{lemma:O3_forest} that $F$ is a forest, so we can use \changed{unfoldings} of type \ref{unfold:cc_vertex}, \ref{unfold:remove_cc} and \ref{unfold:artificial_segment} in order to ensure that $F$ has exactly one vertex in each connected component of $\Delta$ and that the empty marking satisfies conditions \eqref{cond:Omega}, denote the new decomposition by $\Dc^u$. Thus $(\Dc^u,\emptyset)$ is tame. As unfoldings do not change the primary complexity by Lemma \ref{lemma:unfold_same_cpx} it follows that  $(\Dc^u,\emptyset)$ has the same primary complexity as $(\Dc,\pairOmega)$. Note that $\Delta^u=\Delta$, in particular $\Delta^u$ is folded if and only if $\Delta$ is folded.
			
			Suppose $\Delta^u=\Delta$ is not folded. Then there are two edges $e,e'$ starting at $v=\alpha(e) = \alpha(e')$ with $\ell(e) = \ell(e')$. In general $e$ and $e'$ cannot be identified without violating the conditions for $\Delta$ to be a special graph. We will distinguish 4 cases, in each of them we replace $\Delta^u$ by some new special graph $\Delta'$ \changed{that is obtained from $\Delta$ by first unfolding $\Delta$ to some graph $\tilde \Delta$ and then folding $\tilde\Delta$ onto $\Delta'$; note that we do not mention $\tilde \Delta$ if we do not unfold, i.e. if $\Delta=\tilde\Delta$.  For each vertex $u\in F^u$ choose a vertex $u'\in\Delta'$ by first choosing some lift $\tilde u$ of $u$ in $\tilde\Delta$ and then mapping $\tilde u$ to $\Delta'$. We then put $F'=\{u'\,|\,u\in F^u\}$ and } $p':F'\to\Gamma'=\Gamma^u$ by $p'(u')=p^u(u)$. We then define $\Dc'$ to be the decomposition obtained from $\Dc^u$ by replacing $\Delta=\Delta^u$ with $\Delta'$, $F^u$ by $F'$ and $p^u$ by $p'$. In all four cases it will be obvious that  $(\Dc',\emptyset)$ is a decomposition satisfying \eqref{cond:Omega}, \eqref{cond:pi1surj} and \eqref{cond:inj_edges_sppaths}.
			
In the first 3 cases $\Delta '$ is obtained from $\Delta$ by replacing the component of $\Delta$ that contains $v$ by a new graph obtained from that component by a sequence of unfolds and folds. This implies in particular that $\Theta^ f$ and $\Delta^ f$ and therefore $c_3$ and $c_4$ are preserved. In those 3  cases it is immediate that $c_*$ does not increase. As moreover the matrix $M$ is preserved this implies that \eqref{cond:M_large} holds trivially, i.e.\ that $(\Dc',\emptyset)$ is tame. We will obtain a contradiction to the minimality of the primary complexity. As $c_3$ and $c_4$ are unchanged we only need to discuss $c_1$, $c_2$ and $c_5$. 

In case~4 the argument is of a different nature, in fact the argument in that case explains the relevance of condition \eqref{cond:M_large}, i.e.\ explains where the exponential bound in the main theorem comes from.

			\setcounter{case}{0}
			\begin{case}
			If both $e$ and $e'$ are loop edges then identifying them reduces $\betti(\Theta)$ and leaves the number of special paths unchanged. Thus $c_1$ decreases contradicting the minimality of the complexity.
			\end{case}
			
			\begin{case} \label{case:fold_Delta_ENE}
			Suppose that $e$ is a loop edge and that $e'$ is not, as in Figure~\ref{fig:fold_Delta_ENE}. Therefore $e'$ is the first edge of some special path $\delta$. Possibly after unfolding $\Delta$ to \changed{$\tilde \Delta$} along the extremal edges of $\delta$, see Figure~\ref{fig:unfold_sppath}, we can assume that $\delta$ intersects other special paths only at its endpoints. 			
			\begin{figure}[htb]
				\center \input{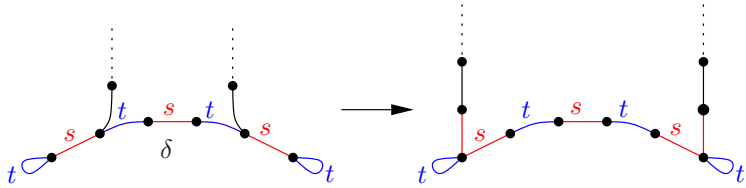}
				\caption{Unfolding the special path $\delta$ at its extremal edges.}
				\label{fig:unfold_sppath}
			\end{figure}

			\changed{Note that $\tilde\Delta$ has up to two more edges than $\Delta$} and therefore $c_5$  increases by up to two. Moreover the complexities $c_1$ and $c_2$ are unchanged. Let $\{s,t\}$ be the type of $\delta$, where $\ell(e) = \ell(e') = s$. Since $v$ is the initial vertex of $\delta$ and the first edge of $\delta$ has label $s$, there is a loop edge $f$ based at $v$ with $\ell(f)=t$. Thus the whole path $\delta$ can be folded on the two loop edges $e$ and $f$. \changed{Call the resulting graph $\Delta'$.}

			\begin{figure}[htb]
				\center \input{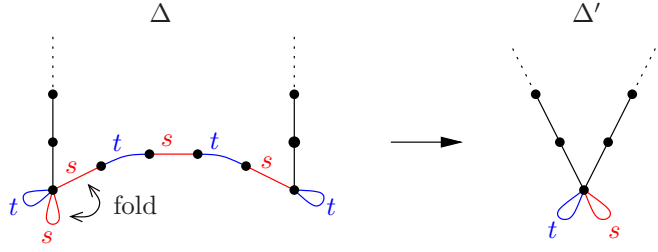}
				\caption{Lemma \ref{lemma:Delta_folded} Case \ref{case:fold_Delta_ENE}: folding a loop edge with a non-loop edge.}
				\label{fig:fold_Delta_ENE}
			\end{figure}
			
 If $\alpha(\delta) \neq \omega(\delta)$ then after folding $\delta$, there are two loops with the same label based at $v$, which we fold. Whether or not $\alpha(\delta) = \omega(\delta)$ the new graph $\Delta'$ obtained after folding has one less special path and $\betti(\Delta') \le\betti(\Delta)-1$. Therefore $c_1=c_1'$ and $c_2 = c_2'$. Lastly, $c_5$ decreases by at least $5$ \changed{ when going from $\tilde\Delta$ to $\Delta'$}, so $\Dc'$ has a smaller complexity than $\Dc$ as this decrease by at least $5$ outweighs the initial increase by at most $2$. This contradicts the minimality of the complexity of $(\Dc,\pairOmega)$.
			
			\end{case}
			
			We assume from now on that neither $e$ nor $e'$ is a loop edge. If $e$ and $e'$ are extremal edges of distinct special paths \changed{and do not lie on a common (necessarily closed) special path} then identifying $e$ and $e'$ is allowed and this identification does not change complexities $c_1$ and $c_2$ and reduces the number of edges of $\Delta$ and therefore $c_5$, a contradiction. Thus we consider two cases. On the one hand we treat the case where both $e$ and $e'$ are the second edges of unique special paths $\delta$ and $\delta'$ with $\alpha(\delta) = \alpha(\delta')$ and $\delta \neq {\delta'}\inv$. On the other hand, we treat the case where $e$ and $e'$ are the two extremal edges of a simple closed special path $\delta$.
			
			\begin{case} \label{case:fold_Delta_NENE}
			 Suppose both $e$ and $e'$ are the second edges of unique special paths $\delta$ and $\delta'$ with $\alpha(\delta) = \alpha(\delta')$ and $\delta \neq {\delta'}\inv$. Since $\delta$ and $\delta'$ have a common initial edge and have second edge $e$ and $e'$ with the same labeling, $\delta$ and $\delta'$ have the same type $\{s,t\}$. Therefore it is possible to fold $\delta$ and $\delta'$ together. If $\omega(\delta) \neq \omega(\delta')$ then also fold the two loop edges with label $s$ or $t$ at the end of $\delta$ and $\delta'$ together. This is illustrated in Figure~\ref{fig:fold_Delta_NENE}. Whether or not $\delta$ and $\delta'$ are closed or $\omega(\delta)=\omega(\delta')$ the new graph $\Delta'$ obtained after folding has one less special path and Betti number one less than $\Delta$. 
It follows that  the complexities $c_1$ and $c_2$ do not increase. Moreover, the number of edges of $\Delta'$ is strictly smaller than that of $\Delta$, so that $c_5' < c_5$, contradicting the minimality of the complexity.
			\begin{figure}[htb]
				\center \input{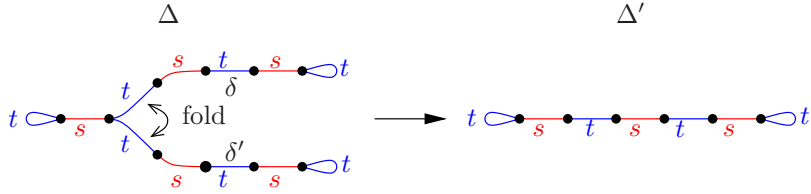}
				\caption{Lemma \ref{lemma:Delta_folded} Case \ref{case:fold_Delta_NENE}: folding two non-loop edges.}
				\label{fig:fold_Delta_NENE}
			\end{figure}
			\end{case}
			
			\begin{case} \label{case:fold_Delta_half}
			Suppose $e$ and $e'$ are the two extremal edges of a simple closed special path $\delta$. Since the labels of $e$ and $e'$ are the same and $\delta$ is of length $m_{st}-1$ it follows that $m_{st}$ is even, in particular $\delta$ is of odd length. Let $M'$ be the Coxeter matrix obtained by replacing $m_{st}$ and $m_{ts}$ by $m_{st}'=m_{st}/2$ in $M$ and leaving all other entries unchanged. The aim is to construct an $M'$-special graph $\Delta'$. In order to satisfy condition \eqref{cond:Delta_relator} we need to modify each special path $\delta_i$ of type $\{s,t\}$. Thus for each special path $\delta_*$ of type $\{s,t\}$ we proceed as follows (see Figure \ref{fig:fold_Delta_half}). Write $\delta_*$ as $e_0,e_1, \ldots,e_n$. As $m_{st}$ is even, $\delta_*$ has an odd number of edges so $\ell(e_i) = \ell(e_{n-i})$. For each $i \neq \frac{n}{2}$ identify $e_i$ with $e_{n-i}\inv$. Moreover, if before identification $\alpha(\delta_*) \neq \omega(\delta_*)$, then after identification there are two loop edges $e,f$ that have the same label based at $\alpha(\delta_*)$. Identify $e$ and $f$ and denote the resulting loop edge by $e'$. Notice that after this identification, $e_{\frac{n}{2}}$ is a loop edge, and letting $\delta_*'$ be the path $e_0,e_1,\cdots,e_{\frac{n}{2}-1}$ we have that $\ell(\delta_*' . e_{\frac{n}{2}} . {\delta_*'}\inv . e')$ is either $(st)^{m_{st}'}$ or $(ts)^{m_{ts}'}$. We replace the $M$-special path $\delta_*$ by the $M'$-special path $\delta_*'$.
			\begin{figure}
				\center \input{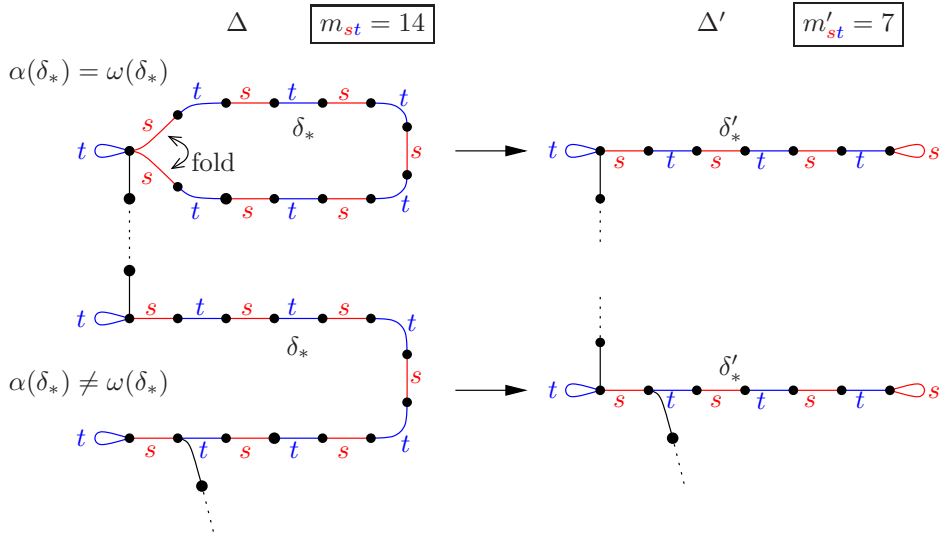}
				\caption{Lemma \ref{lemma:Delta_folded} Case \ref{case:fold_Delta_half}: folding $\Delta$.}
				\label{fig:fold_Delta_half}
			\end{figure}
			
			Let $\Delta'$ denote the graph obtained by these identifications. Clearly $\Delta'$ is an $M'$-special graph. Observe that $\betti(\Delta) = \betti(\Delta')$ and that the number of special paths is unchanged, so $c_1' = c_1$ and $c_2' = c_2$. Since $\Delta$ maps onto $\Delta'$ one also has that $c_3' = c_3$ and $c_4' = c_4$. Moreover, it is clear that $c_5' = |E\Delta'| < |E\Delta| = c_5$. Thus the new tame marked decomposition $(\Dc',\emptyset)$ has smaller complexity than $(\Dc^u,\emptyset)$. Moreover $c_*' \leq c_*-1$ because $\Delta'$ has at least one more loop edge than $\Delta$, so that 
			\[m'_{st} = m_{st}/2 \geq \Konst.2^{c_*}/2 \geq \Konst.2^{c_*'}\] and the decomposition $\Dc'$ satisfies condition \eqref{cond:M_large}, contradicting the minimality of the complexity. \qedhere
			\end{case}
		\end{proof}
		
		\begin{remark} \label{rem:sppath_contractible} If $\Delta$ is folded, then each special path is embedded. Arguing by contradiction, we suppose $\delta$ is a special path of type $\{s,t\}$ that is not embedded. Recall that $\alpha(\delta) = \omega(\delta)$ by condition \eqref{cond:sppaths_emb_or_closed}. If $m_{st}$ is even then the first few edges of $\delta$ can be folded with the last few edges of $\delta$, so $\Delta$ is not folded. If $m_{st}$ is odd, then there are two loop edges based at $\alpha(\delta) = \omega(\delta)$ with labels $s$ and $t$ respectively. But the (non-loop) edges of $\delta$ also have label $s$ or $t$, so $\Delta$ is not folded in this case either.
		\end{remark}
		
		\begin{lemma} \label{lemma:delta_12}
			Let $(\Dc,\pairOmega)$ be a tame marked decomposition of minimal complexity. Then the following conditions hold:
			\begin{customenum}{$\mathbf{\bar\Delta}$}
				\item Let $x,y$ be distinct vertices in $\Delta$ lying on special paths of the same type. Then $x$ and $y$ do not project \changed{to} the same vertex of $\bar \Delta$. \label{cond:two_vert_same_type}
				\item If two edges in $\bar \Delta$ are adjacent and one of them is a loop edge then they have different labels. \label{cond:no_loop_on_special_path}
			\end{customenum}
		\end{lemma}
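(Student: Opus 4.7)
The plan is to prove both statements by contradiction from the same setup, exploiting the minimality of the complexity of $(\Dc,\pairOmega)$. By Lemma~\ref{lemma:Delta_folded} we may assume that $\Delta$ is folded, and then by Remark~\ref{rem:sppath_contractible} every special path of $\Delta$ is embedded. The common strategy is to produce, from a putative failure of \eqref{cond:two_vert_same_type} or \eqref{cond:no_loop_on_special_path}, a new $M$-special graph $\Delta'$ obtained from $\Delta$ by identifying two vertices and performing the forced cascade of folds, together with a correspondingly adjusted $F'$, $p'$, such that the image $\bar{\Delta'}=\bar\Delta$ in $\Theta$ and the graph $\Theta$ itself are unchanged. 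This leaves $c_1,c_2,c_3,c_4$ unchanged while strictly decreasing $c_5=|E\Delta|$, contradicting minimality.

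For \eqref{cond:no_loop_on_special_path}: suppose adjacent edges $\bar e,\bar f\in\bar\Delta$ both carry the label $s$, with $\bar e$ a loop edge at $\bar v$. Lift $\bar e$ to a loop $e\in\Delta$ based at some $v$, and $\bar f$ to an edge $f\in\Delta$ based at some $v'$ with $\bar v'=\bar v$. Since $\Delta$ is folded and $e$ is the unique edge at $v$ with label $s$, we must have $v'\neq v$. Because $e$ is a loop, condition \eqref{cond:Delta_relator} forces $v$ to be extremal of a special path of some type $\{s,t\}$, so there is also a loop edge at $v$ with the other label $t$. Analysing separately the cases where $\bar f$ is a loop edge or where $\bar f$ is the image of a non-loop edge on a special path, one sees that $v'$ is likewise the endpoint of a loop edge or of a special path whose type contains $s$. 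The identification $v\sim v'$ in $\Delta$ then forces folding of the two loop edges labeled $s$, and propagates along the special paths exactly as in Cases~\ref{case:fold_Delta_ENE}--\ref{case:fold_Delta_NENE} of Lemma~\ref{lemma:Delta_folded}; this produces a strictly smaller $M$-special graph $\Delta'$ with $\bar{\Delta'}=\bar\Delta$, the required contradiction.

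For \eqref{cond:two_vert_same_type}: suppose $x\neq y$ are vertices of $\Delta$ lying on special paths of the same type $\{s,t\}$ with $\bar x=\bar y$. The star of an interior vertex of a type-$\{s,t\}$ special path consists of exactly two edges with labels $s$ and $t$, while at an extremal vertex the star also carries the two loop edges with labels $s$ and $t$ demanded by condition~\eqref{cond:Delta_relator}. In every sub-case (interior/interior, interior/extremal, extremal/extremal), the stars of $x$ and of $y$ present, at $\bar x=\bar y$, pairs of $\bar\Delta$-edges sharing the same label; since the types coincide, identifying $x$ with $y$ in $\Delta$ triggers a cascade of folds which, by the same mechanism as in Case~\ref{case:fold_Delta_NENE} of Lemma~\ref{lemma:Delta_folded}, is globally compatible with the special-graph axioms and yields a valid $M$-special graph $\Delta'$ with fewer edges and with $\bar{\Delta'}=\bar\Delta$.

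The bulk of the work, and the main obstacle, is the bookkeeping in this case analysis: one must check in every configuration that the propagated identifications do not violate condition~\eqref{cond:sppaths_emb_or_closed} on how special paths may intersect, that $\Delta'$ remains an $M$-special graph (in particular that its special paths are still simple or simple-closed of the correct length), and that $F'$ and $p'$ can be chosen so that the quotient $\Theta'$ equals $\Theta$ and conditions \eqref{cond:pi1surj}, \eqref{cond:Omega}, \eqref{cond:inj_edges_sppaths} and \eqref{cond:M_large} persist. The key simplification is that $\bar\Delta$ and $\Theta$ are never altered, so that all primary-complexity terms beyond $c_5$ are unaffected, and the decrease of $|E\Delta|$ delivers the desired contradiction.
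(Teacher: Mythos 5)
Your construction rests on the claim that $\Delta$ can be folded ``in place'' so that $\bar{\Delta'}=\bar\Delta$ and $\Theta$ is unchanged, leaving $c_1,\dots,c_4$ fixed while $c_5$ drops. That claim is false, and it is the load-bearing step. At this stage of the argument $\Theta$ is \emph{not} known to be folded (Proposition~\ref{prop:main_tech} comes later and depends on this lemma), so the edges of $\Delta$ that your fold cascade identifies have, in general, \emph{distinct} images in $\Theta$: two edges of $\Delta$ have the same image in $\Theta$ only if both lie in $F$ with equal $p$-images, and by Lemma~\ref{lemma:O3}~\eqref{lemma:O3_small} the forest $F\subset\hOmega_3$ can never contain all inner edges of a special path, so the cascade that propagates along $\delta_x$ and $\delta_y$ necessarily identifies edges that are distinct in $\Theta$. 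Hence $\Theta'\neq\Theta$ and your bookkeeping for $c_1,\dots,c_4$ collapses. Worse, even if $\Theta$ really were unchanged, folding two special paths into one decreases $|\Sppaths|$ while keeping $\betti(\Theta)$ fixed, so $c_1=\betti(\Theta)-|\Sppaths|$ would strictly \emph{increase}; the lexicographic comparison is lost before $c_5$ is ever consulted. There is also a structural obstruction you defer to ``bookkeeping'' but which is not merely bookkeeping: if $x$ and $y$ sit at different positions along $\delta_x$ and $\delta_y$, identifying them and folding does not align the extremal loop edges, the fold wraps around them, and the result is not an $M$-special graph in any evident way (this is exactly the translation/halving phenomenon the paper isolates elsewhere).

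The paper's proof goes in a genuinely different direction that you should compare with. It never tries to fold $\delta_x$ onto $\delta_y$. Instead, after unfoldings of types \ref{unfold:cc_vertex}--\ref{unfold:artificial_segment} (which require knowing $F$ is a forest, via Lemma~\ref{lemma:O3}), it \emph{collapses} the offending special path(s) to a point (and, when $x,y$ lie in the same component of $F$, also collapses the closed lift $\hat\gamma=p(\gamma)$ in $\Gamma$, using Lemma~\ref{lemma:O3}~\eqref{lemma:O3_small} and the foldedness of $\Delta$ to see that $\gamma$ is a subpath of a single special path). This changes $\Theta$ on purpose: $\betti(\Theta)$ and $|\Sppaths|$ drop in tandem so that $c_1$ stays fixed while $c_2$ drops (or, in the two-loop-edge case of \eqref{cond:no_loop_on_special_path}, $c_1$ itself drops), and one checks $c_*$ does not increase so that \eqref{cond:M_large} survives. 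The contradiction therefore lands at position $1$ or $2$ of the complexity, not at $c_5$.
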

		\begin{proof} The proofs in all cases are by contradiction to minimality, i.e.\ we show that the complexity was not minimal provided the conclusion fails. In all cases we construct decompositions of smaller complexity and it is always immediate from the construction that 
		for any closed path in the original graph $\Theta$ there is a path  with the same labeling in the new graph $\Theta$ (fixing some appropriate basepoint in each graph).
		Thus property \eqref{cond:pi1surj} always holds for the constructed decomposition and we ignore to mention it in the remainder of this proof.
		
		We first prove that \eqref{cond:two_vert_same_type} holds.
			The argument is illustrated in Figure \ref{fig:two_vert_same_type}.
			\begin{figure}
				\center\input{two_vert_same_type.pstex_t}
				\caption{Lemma \ref{lemma:delta_12} \eqref{cond:two_vert_same_type}: constructing $\Dc'$ of smaller complexity.}
				\label{fig:two_vert_same_type}
			\end{figure}
			Suppose there are two vertices $x$ and $y$ lying on special paths of the same type that project to the same point in $\Theta$, or in other words $x$ and $y$ are in $F$ and $p(x) = p(y)$. Let $F_x$ and $F_y$ denote the connected components of $F$ containing $x$ and $y$ respectively. Similarly let $\Delta_x$ and $\Delta_y$ denote the connected components of $\Delta$ containing $x$ and $y$ respectively. We deal with two cases, depending on whether $F_x=F_y$ or not. 
			\setcounter{case}{0}
			\begin{case} \label{case:same_cc} Suppose that $F_x=F_y$. Let $\gamma$ be a reduced path in $F$ from $x$ to $y$. By Lemma~\ref{lemma:O3}~\eqref{lemma:O3_small} the path $\gamma$ does not contain all inner edges of a special path. By Lemma~\ref{lemma:Delta_folded} any two special paths of the same type do not intersect in $\Delta$, and any special path is embedded in $\Delta$. Therefore $\gamma$ is a subpath of a special path $\delta$. Let $\hat \gamma$ be the (unique) lift of $\bar \gamma$ in $\Gamma$, or in other words $\hat \gamma = p(\gamma)$. Since $\bar \gamma$ is a closed, so is its lift $\hat \gamma$.
				
				We unfold $\Dc$ to a new decomposition $\Dc^u$ as follows. Unfold each connected component of $F$ to a vertex using unfolding of type \ref{unfold:cc_vertex}, where $x$ is the chosen vertex for its connected component. Using unfolding of type \ref{unfold:remove_cc} we can assume $x$ is the only vertex of $F$ in $\Delta_x$, and in particular $x$ is the only vertex of $F$ in the special path $\delta$. We also apply unfolding of type \ref{unfold:artificial_segment} to each vertex of $F$ except $x$, so that the empty marking satisfies condition  \eqref{cond:Omega}. Unfold the special path $\delta$ at its boundary edges if applicable (see Figure \ref{fig:unfold_sppath}). Note that doing this may increase $c_5$. 
				
				\changed{Finally, we let $\Dc'$ be obtained from $\Dc^u$ by the following surgery: collapse $\hat \gamma\subset \Gamma^u$ to a vertex $z$, yielding $\Gamma'$; collapse $\delta\subset \Delta^u$ to a vertex $x'$  and if both boundary loop edges of $\delta$ had the same label, relabel one of them with the other label in the type of $\delta$, call the resulting graph $\Delta'$. Let $F'=(F^u\backslash \{x\})\cup\{x'\}$ and define $p':F'\to\Gamma'$ by $p'(x') = z$ and $p'(v)=p^u(v)$ otherwise}. Note that $\betti(\Theta') = \betti(\Theta) -1$ but $\Delta'$ has one less special path than \changed{$\Delta$}. Thus $c_1' = c_1$, $c_2' = c_2 -1$ and $c_* = c_*-1$. In conclusion $(\Dc',\emptyset)$ is a tame marked decomposition, contradicting the minimality of the complexity of $(\Dc,\pairOmega)$.
			\end{case}
			
			From now on we assume that $F_x\neq F_y$. Let $\delta_x$ and $\delta_y$ be special paths of the same type containing $x$ and $y$ respectively. Unfold each connected component of $F$ to a vertex using unfolding of type \ref{unfold:cc_vertex}, where $x$ and $y$ are the chosen vertices for their connected components. Using unfolding of type \ref{unfold:remove_cc} we can assume $x$ and $y$ are the only vertices of $F$ in $\Delta_x \cup \Delta_y$, in particular $x$ and $y$ are the only vertices of $F$ in $\delta_x \cup \delta_y$. We also apply unfolding of type \ref{unfold:artificial_segment} to each vertex of $F$ except $x$ and $y$, so that the empty marking satisfies condition \eqref{cond:Omega}. Unfold the special paths $\delta_x$ and $\delta_y$ at their boundary edges. Let $\Dc^u$ denote the unfolded decomposition. We split this case into two. 
			
			\begin{case} \label{case:dx_eq_dy}If $\delta_x = \delta_y=:\delta$ do the following surgery: \changed{collapse $\delta$ to a point $x'$ and if both boundary loop edges of $\delta$ had the same label, relabel one of them with the other label in the type of $\delta$, call the resulting graph $\Delta'$. Put $F'=(F^u\backslash \{x,y\})\cup\{x'\}$ and define $p':F'\to \Gamma':=\Gamma^u$ as $p'(x') = p(x)$ and $p'(v)=p^u(v)$ for $v\in F^u\backslash \{x,y\}$}. Note that $F'$ has one less connected component than $F^u$ so that \changed{$\betti(\Theta') = \betti(\Theta^u) -1= \betti(\Theta) -1$}. Moreover $\Delta'$ has one less special path than \changed{$\Delta^u=\Delta$} so $c_1' = c_1$, $c_2' = c_2 -1$ and $c_* = c_*-1$. Again $(\Dc',\emptyset)$ is a tame marked decomposition, contradicting the minimality of the complexity of $(\Dc,\pairOmega)$.
			\end{case}
			\begin{case} \label{case:dx_neq_dy} Suppose now that $\delta_x$ is distinct from $\delta_y$. Note that $\delta_x$ and $\delta_y$ do not intersect as they are of the same type and $\Delta$ is folded by Lemma \ref{lemma:Delta_folded}. We proceed as follows. Collapse $\delta_x \cup \delta_y$ to a vertex \changed{$x'$ and replace the four boundary loop edges of $\delta_x$ and $\delta_y$ by two loop edges labeled by the two labels in the type of $\delta_x$. Call the resulting graph $\Delta'$}. \changed{Let $\Gamma':=\Gamma^u$ and define $F'$ and $p'$ as in case \ref{case:dx_eq_dy}}. Now $F'$ has one less connected component than \changed{$F^u$}, and $\Delta'$ has two less special paths and two less loop edges \changed{than $\Delta^u$}. Moreover two contractible subsets got identified to one vertex so \changed{$\chi(\Delta') = \chi(\Delta^u)+2-1=\chi(\Delta^u) + 1=\chi(\Delta)+1$}. Indeed, a special path is contractible in $\Delta^u$ by \changed{the remark preceding this lemma.} 
Therefore \changed{$\betti(\Theta') = \betti(\Theta^u) - 2=\betti(\Theta)-2$}. Plugging this information into the complexities we get $c_1' = c_1$ and $c_2' = c_2 -1$. Moreover $c_*' \leq c_*$, so $(\Dc',\emptyset)$ is a tame marked decomposition, contradicting the minimality of the complexity of $(\Dc,\pairOmega)$.
			\end{case}
			
\smallskip 	We now show that condition 		 Condition  \eqref{cond:no_loop_on_special_path} holds.

			\setcounter{case}{0}
			\begin{case} Suppose there is a loop edge $e$ in $\bar \Delta$ that lifts to a non-loop edge $\tilde e$ of $\Delta$. Then the two vertices $\alpha(\tilde e)$ and $\omega(\tilde e)$ of $\Delta$ are contained in a special path and are distinct in $\Delta$, so condition \eqref{cond:two_vert_same_type} does not hold for $\Dc$. We have already shown that this contradicts the minimality of the complexity of $(\Dc,\pairOmega)$.
			\end{case}
			
			From now on for a loop edge $e$ in $\bar \Delta$ we denote by $\tilde e$ its (unique) lift as a loop edge in $\Delta$.
			
			\begin{case} \label{case:two_loop_edges} Suppose that there are two loop edges $e \neq e'$ in $\bar \Delta$ that have the same label and same base vertex. Let $\tilde e$ and $\tilde e'$ be their lifts in $\Delta$ with basepoints $x$ and $y$ respectively. Notice that $x \neq y$ as otherwise $\Delta$ would not be folded, contradicting Lemma \ref{lemma:Delta_folded}. Moreover, $x$ and $y$ do not lie in the same connected component of $F$ by Lemma~\ref{lemma:O3}~\eqref{lemma:O3_small}. Use unfolding of type \ref{unfold:cc_vertex} to replace $F$ by a set of vertices choosing $x$ and $y$ in their respective connected components. Also use unfolding of type \ref{unfold:artificial_segment} for each vertex of $F$ other than $x$ and $y$. \changed{Call the resulting decomposition $\Dc^u$. Now construct a new decomposition $\Dc'$ by} the following surgery, depicted in Figure \ref{fig:two_loop_edges}. Identify $x$ and $y$ to a new vertex $x'$ and identify the two edges $\tilde e$ and $\tilde e'$, yielding \changed{$ \Delta'$. Let $\Gamma':=\Gamma^u=\Gamma$, $F':=(F^u\backslash\{x,y\})\cup\{x'\}$ and define $p':F'\to\Gamma'$ as $p'(x') = p(x)$ and $p'(v)=p^u(v)$ for $v\in F^u\backslash\{x,y\}$}. Now \changed{$\chi(\Delta') = \chi(\Delta^u)=\chi(\Delta)$} but $F'$ has one connected component less than $F^u$. Therefore \changed{$\betti(\Theta') = \betti(\Theta^u) - 1=\betti(\Theta)-1$} and $c_1' = c_1 - 1$. Moreover $c_*' \leq c_*$, so $(\Dc',\emptyset)$ is a tame marked decomposition, contradicting the minimality of the complexity of $(\Dc,\pairOmega)$.
			\begin{figure}
				\center \input{two_loop_edges.pstex_t}
				\caption{Lemma \ref{lemma:delta_12} \eqref{cond:no_loop_on_special_path} Case \ref{case:two_loop_edges}: Folding two loop edges.}
				\label{fig:two_loop_edges}
			\end{figure}
			\end{case}
			
			\begin{case} \label{case:loop_non_loop} Now suppose there is a loop edge $e$ and a non-loop edge $e'$ in $\bar \Delta$ having the same initial vertex. Let $\tilde e$ and $\tilde e'$ be lifts of $e$ and $e'$ respectively in $\Delta$. Let $x$ be the basevertex of $\tilde e$ and $y = \alpha(\tilde e')$. Since $\Delta$ is folded by Lemma \ref{lemma:Delta_folded} the vertices $x$ and $y$ are distinct. If $x$ and $y$ lie on the same special path then $\Dc$ does not satisfy condition \eqref{cond:two_vert_same_type} which is already ruled out. So without loss of generality we can assume that there is a special path $\delta$ containing $\tilde e'$ but not $x$. Moreover since $\tilde e$ and $\tilde e'$ have the same label, the label of $\tilde e$ is contained in the type of $\delta$. Lemma~\ref{lemma:O3}~\eqref{lemma:O3_small} implies that $x$ and $y$ are not in the same connected component of $F$. Apply unfolding of type \ref{unfold:cc_vertex} on each connected component of $F$ choosing $x$ and $y$ in their respective connected components. Also apply unfolding of type \ref{unfold:artificial_segment} to each vertex of $F$ other than $x$ and $y$. Unfold the extremal edges of $\delta$ in $\Delta$ if applicable, \changed{ call the resulting decomposition $\Dc^u$. Then perform the following surgery yielding a new decomposition $\Dc'$, see Figure~\ref{fig:loop_non_loop}}. 

	\begin{figure}
				\center \input{loop_non_loop.pstex_t}
				\caption{Lemma \ref{lemma:delta_12} \eqref{cond:no_loop_on_special_path} Case \ref{case:loop_non_loop}: Doing surgery.}
				\label{fig:loop_non_loop}
			\end{figure}

\changed{Collapse $\delta \cup \{x\}\subset \Delta^u$ to a vertex $x'$. If both boundary loop edges of $\delta$ had the same label, relabel one of them with the other label in the type of $\delta$.} Now $\tilde e$ can be folded onto one of these two boundary loop edges. \changed{Denote the resulting graph by $\Delta'$.} \changed{Put $F'=(F^u \backslash \{x,y\})\cup\{x'\}$, $\Gamma':=\Gamma^u=\Gamma$  and define $p':F'\to\Gamma'$ as $p'(x')=p(x)$ and $p'(v)=p^u(v)$ otherwise}. Two disjoint contractible subsets of \changed{$\Delta^u$} were collapsed to a point of $\Delta'$ and $\Delta'$ has one loop edge less than \changed{$\Delta^u$}, so that $\chi(\Delta') = \chi(\changed{\Delta^u})=\chi(\Delta)$. Moreover, $\Delta'$ has one special path less than $\changed{\Delta^u}$ and $F'$ has one connected component less than \changed{$F^u$}. In conclusion $c_1' = c_1$, $c_2' = c_2-1$ and $c_*' \leq c_*$. Thus $(\Dc',\emptyset)$ is a tame marked decomposition, contradicting the minimality of the complexity of $(\Dc,\pairOmega)$. \qedhere

			\end{case}
		\end{proof}

			
			

Assume that $\gamma$ is a reduced simple path in $\Delta$ of length at most $4$. As all special paths are of length at least 5 it is clear that either $\gamma$ is a subpath of some special path or $\gamma$ is the concatenation of two path $\gamma_1$ and $\gamma_2$ that are both subpaths of special paths. We then say that $\gamma$ \emph{turns} at $v=\omega(\gamma_1)=\alpha(\gamma_2)$. Note that $v$ is in distance at most one of some loop vertex. 

\begin{lemma}\label{intersection}  Let $(\Dc,\pairOmega)$ be a tame marked decomposition and $\bar\gamma$ be reduced path in $\bar\Delta$ of length $k$ that has two distinct lifts $\gamma_1=e_1,\ldots ,e_k$ and $\gamma_2=f_1,\ldots ,f_k$. Then $k\le 3$. Moreover if $k=3$ then one of the following holds:
\begin{enumerate}
\item  $\gamma_1$ turns at  $\alpha(e_2)$ and $\gamma_2$ turns at $\alpha(f_3)$.
\item $\gamma_1$ turns at $\alpha(e_3)$ and $\gamma_2$ turns at $\alpha(f_2)$. 
\end{enumerate}
\end{lemma}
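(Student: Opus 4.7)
The plan is to first reduce to the case where both lifts lie in $F$, then combine the forest structure of $F$ with label alternation in special paths and Lemma~\ref{lemma:delta_12} to bound $k$ and, for $k=3$, determine the turning structure.

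\textbf{Lifts lie in $F$.} By Lemma~\ref{lemma:Delta_folded}, $\Delta$ is folded. If $e_i = f_i$ for some $i$, then $\alpha(e_{i+1}) = \omega(e_i) = \omega(f_i) = \alpha(f_{i+1})$, and since $\ell(e_{i+1}) = \ell(f_{i+1})$, foldedness forces $e_{i+1} = f_{i+1}$; an analogous backward induction yields $\gamma_1 = \gamma_2$. Hence for distinct lifts $e_i \ne f_i$ for every $i$, and since distinct edges of $\Delta$ with common image in $\bar\Delta$ must lie in $F$ and be $p$-identified, we obtain $\gamma_1, \gamma_2 \subseteq F$. Since $F$ is a forest by Lemma~\ref{lemma:O3}~\eqref{lemma:O3_forest}, both lifts are simple reduced paths consisting of non-loop edges lying on special paths. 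For each $i$, $\alpha(e_i) \ne \alpha(f_i)$, and Lemma~\ref{lemma:delta_12}~\eqref{cond:two_vert_same_type} forbids these two vertices from lying simultaneously on special paths of any common type.

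\textbf{Turning vertices and five-spacing.} Call an interior vertex $v$ of a lift $\gamma$ a \emph{turning vertex} if the two edges of $\gamma$ meeting at $v$ belong to distinct special paths. If $\gamma$ has turning vertices at consecutive positions $j$ and $j+1$, then both endpoints of the edge $e_j$ are extremal for the special path $\delta$ containing $e_j$; since the forest $F$ contains no cycle these two endpoints are distinct, and since $\delta$ is embedded (Remark after Lemma~\ref{lemma:Delta_folded}) the unique simple traversal of $\delta$ between its two extremal vertices is the single edge $e_j$, contradicting $|\delta|\ge 5$ from Condition~\eqref{cond:Delta_length_sppath}. The same argument generalizes: between any two consecutive turning positions in $\gamma$ the subpath fills an entire embedded special path, so consecutive turning positions must differ by at least $5$. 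In particular, for $k \le 6$ each lift has at most one turning vertex.

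\textbf{Bounding $k \le 3$.} Assume $k\ge 4$ for contradiction. If $\gamma_1$ has no turning vertex, it lies in a single special path of type $\{s,t\}$; then by Lemma~\ref{lemma:delta_12}~\eqref{cond:two_vert_same_type} no vertex of $\gamma_2$ lies on a type-$\{s,t\}$ special path, yet consecutive edges of $\gamma_2$ in a common special path would force that path's type to be $\{s,t\}$, contradiction. So $\gamma_2$ turns at every interior vertex, contradicting the five-spacing for $k\ge 3$. Hence $\gamma_1$ turns exactly once at some position $i_0$, and symmetrically $\gamma_2$ at some $j_0$. At position~$2$, if neither lift turns there, then $\alpha(e_2), \alpha(f_2)$ are both interior vertices of special paths of type $\{\ell_1, \ell_2\}$, violating Lemma~\ref{lemma:delta_12}~\eqref{cond:two_vert_same_type}. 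So $2 \in \{i_0, j_0\}$; symmetrically $k \in \{i_0, j_0\}$, and since $k \ne 2$ this forces $\{i_0, j_0\} = \{2, k\}$. The final configuration is ruled out by a careful comparison: after possibly swapping lifts, assume $i_0 = 2$ and $j_0 = k$; then $e_2, \ldots, e_k$ lie in one special path of type $\{\ell_2, \ell_3\}$, while $f_1, \ldots, f_{k-1}$ lie in one of type $\{\ell_1, \ell_2\}$. Matching the $p$-identifications along the two lifts against the extremal-vertex structure of these special paths, together with the forest structure of $F$, yields the required contradiction.

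\textbf{The $k=3$ case and main obstacle.} For $k=3$ the above forces both $\gamma_1$ and $\gamma_2$ to have exactly one turning vertex, at a position in $\{2, 3\}$. If both turned at the same position, then at the opposite position $\alpha(e_\cdot), \alpha(f_\cdot)$ would both be interior vertices of special paths of a common type, contradicting Lemma~\ref{lemma:delta_12}~\eqref{cond:two_vert_same_type}. So the turns occur at opposite positions, giving precisely configurations~(1) and~(2). The main obstacle is completing the elimination for $\{i_0, j_0\} = \{2, k\}$ with $k \ge 4$, requiring a delicate bookkeeping of the four special paths involved in the two lifts against the $p$-identifications and the forest constraint in $F$.
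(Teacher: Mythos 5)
Your core mechanism is exactly the paper's: at any interior position where neither lift ``turns'', the two length-two subpaths $e_i,e_{i+1}$ and $f_i,f_{i+1}$ lie in special paths of the same type (the type is read off from the common labels), and $\alpha(e_{i+1})\neq\alpha(f_{i+1})$ project to the same vertex of $\bar\Delta$, contradicting condition~\eqref{cond:two_vert_same_type}. Your preliminary reduction to lifts in $F$ is correct but not needed for this.

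There is, however, a genuine gap, and you flag it yourself: you never eliminate the configuration $\{i_0,j_0\}=\{2,k\}$ for $k\ge 4$, deferring to a ``careful comparison'' and ``delicate bookkeeping'' that you do not carry out. The irony is that no such bookkeeping is required: the argument you already use at position $2$ applies verbatim at \emph{every} interior position. For $k\ge 4$ there are at least three interior positions $2,3,\dots,k$, each of which must be a turning position of at least one of the two lifts; since (restricting to a length-$4$ window) each lift turns at most once, at most two positions can be covered, so $k-1\le 2$. Concretely, in your configuration $i_0=2$, $j_0=k\ge 4$, neither lift turns at position $3$, and condition~\eqref{cond:two_vert_same_type} applied there finishes the proof. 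This is precisely how the paper argues (phrased as: among the three length-two windows of a length-$4$ subpath, each lift spoils at most one, so some window is good for both).

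A secondary issue: your justification of the ``five-spacing'' claim asserts that if $\gamma$ turns at two consecutive positions then both endpoints of the intermediate edge are \emph{extremal vertices} of its special path. Condition~\eqref{cond:sppaths_emb_or_closed} only guarantees that a component of the intersection of two special paths consists of a common extremal vertex together with possibly one or two common extremal \emph{edges}, so an intersection vertex need only be within distance $1$ of an extremal vertex; your spacing bound and the claim ``at most one turn for $k\le 6$'' are therefore not established as stated. What is actually needed (and what the paper proves in the discussion preceding the lemma) is only the weaker fact that a reduced simple path of length at most $4$ is either a subpath of a single special path or a concatenation of two such subpaths, i.e.\ turns at most once; with that, the counting above closes both the $k\ge4$ case and your (otherwise correct) $k=3$ case.
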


\begin{proof} If $k\ge 4$ or $k=3$ and neither (1) nor (2) hold then the above discussion implies that for some $i$ both $e_i,e_{i+1}$ and $f_i,f_{i+1}$ are subpath of special paths $\delta$ and $\delta'$ which must be of the same type. This contradicts condition  \eqref{cond:two_vert_same_type}.
\end{proof}

		\changedm{\begin{corollary} \label{cor:dist} Let $(\Dc,\pairOmega)$ be a tame marked decomposition of minimal complexity. Let $x \neq y \in VF$ be such that $p(x)=p(y)$, and let $e \in E\Delta$ be a non-loop edge such that $\alpha(e) = x$. 
		\begin{enumerate}
			\item If $\omega(e)$ is the basepoint of a loop edge, then $e \in \hOmega_3$. \label{cor:dist_no_loop_edge}
			\item If $e'$ is a non-loop edge of $\Delta$ such that $\alpha(e') = y$ and such that $\ell(e) = \ell(e')$, then $e,e' \in \hOmega_3$. \label{cor:dist_at_most_3}
		\end{enumerate}
		\end{corollary}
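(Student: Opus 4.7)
In both parts the argument is by contradiction with the minimality of the complexity of $(\Dc,\pairOmega)$, making full use of the structural consequences already established for minimal-complexity tame marked decompositions: $\Delta$ is folded (Lemma~\ref{lemma:Delta_folded}) and conditions~\eqref{cond:two_vert_same_type} and \eqref{cond:no_loop_on_special_path} both hold (Lemma~\ref{lemma:delta_12}). First I apply condition~\eqref{cond:Omega_vertex} to the pair $x\neq y$ to conclude $x,y \in \hOmega_3$, and observe that an edge lies in $\hOmega_3$ precisely when both of its endpoints do; it therefore suffices to show $\omega(e)\in\hOmega_3$ in part~(1), and $\omega(e),\omega(e')\in\hOmega_3$ in part~(2).

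For part~(2) I split cases according to whether $\bar e = \bar e'$ as edges of $\bar\Delta$. If $\bar e = \bar e'$, then since the quotient $\Gamma \sqcup \Delta \to \Theta$ only identifies edges through the map $p:F\to\Gamma$, this forces $e,e' \in F$ with $p(e)=p(e')$, and then $e,e'\in\hOmega_3$ by condition~\eqref{cond:Omega_edge}. Otherwise $\bar \Delta$ carries two distinct edges at $\bar x = \bar y$ with the same label, and I derive a contradiction with minimality by performing a surgery in the spirit of Case~\ref{case:two_loop_edges} of the proof of Lemma~\ref{lemma:delta_12}: unfold $F$ to a set of isolated vertices using unfoldings of types~\ref{unfold:cc_vertex} and~\ref{unfold:artificial_segment} (preserving primary complexity by Lemma~\ref{lemma:unfold_same_cpx}) keeping $x$ and $y$ as chosen representatives, then identify $x$ with $y$ in $\Delta$ to produce a new $M$-special graph $\Delta'$ after absorbing any forced folds on adjacent extremal loop edges. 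Because $F$ is a forest (Lemma~\ref{lemma:O3}\eqref{lemma:O3_forest}), $x$ and $y$ lie in distinct components of $F$, so this identification reduces $\betti(\Theta)$ by at least one, whence $c_1$ or $c_2$ strictly decreases, contradicting minimality.

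For part~(1), assume for contradiction $\omega(e)\notin\hOmega_3$ and set $t=\ell(e)$. I case-analyze the edges of $\Delta$ incident to $y$: if $y$ is the initial vertex of some non-loop edge $e''$ with $\ell(e'')=t$, then part~(2) applied to the pair $(e,e'')$ yields $e,e''\in\hOmega_3$, a contradiction; if $y$ is the basepoint of a loop edge with label $t$, then in $\bar\Delta$ this loop edge and the non-loop edge $\bar e$ are both incident to $\bar x=\bar y$ with label $t$, contradicting~\eqref{cond:no_loop_on_special_path}. In the remaining case no edge of $\Delta$ at $y$ has label $t$, and I use the loop edge $f$ at $\omega(e)$ to carry out an analogue of the ``translate a loop edge'' move outlined in Section~\ref{sec:outline}: the loop $f$ combined with $e$ can be transplanted via an unfold-and-refold sequence in $\Delta$ to produce at $y$ either an edge with label $t$ or a configuration triggering a fold that strictly reduces primary complexity, again contradicting minimality.

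The main obstacle is verifying, in each of these surgeries, that the modified graph $\Delta'$ remains a valid $M$-special graph (condition~\eqref{cond:Delta_relator} being the delicate point, since by~\eqref{cond:two_vert_same_type} the special paths through $e$ and $e'$ necessarily have different types), that the empty marking on the resulting $\Dc'$ satisfies~\eqref{cond:Omega}, and that the strict decrease of primary complexity indeed occurs. The key mechanism is always that identifying two vertices lying in distinct components of $F$ cuts a loop in $\Theta$ and reduces $\betti(\Theta)$ by one, tracked using the interpretation of $c_2$ from Remark~\ref{remark:interpret_c_2}.
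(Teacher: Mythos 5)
Your overall strategy (surgery on $\Delta$ plus a complexity count) is not the route the paper takes, and in this instance the surgery does not go through. The paper's proof of the corollary is purely structural and performs no surgery at all: since $x,y\in\hOmega_3$ by \eqref{cond:Omega_vertex}, if $e\notin\hOmega_3$ then the shortest path $\gamma$ from $\hOmega$ to $x$ has length exactly $3$; by Lemma~\ref{homotopyOmega3}~(3) it has the same image in $\bar\Delta$ as the corresponding path $\gamma'$ to $y$, so Lemma~\ref{intersection} forces each of these lifts to turn, hence to pass within distance $1$ of a loop vertex. For part~(1) this places two basepoints of loop edges at distance at most $4$, contradicting \eqref{cond:Delta_length_sppath}; for part~(2) the terminal segments $e_3,e$ and $e_3',e'$ cannot turn, so $x$ and $y$ lie on special paths of the same type, contradicting \eqref{cond:two_vert_same_type}.

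The concrete gaps in your version are these. First, in part~(2) identifying $x$ with $y$ inside $\Delta$ changes nothing in $\Theta$ (they already have the same image there), so $\betti(\Theta)$ does not drop as you claim; and, more seriously, the resulting graph is in general not an $M$-special graph: $x$ and $y$ are typically \emph{interior} vertices of special paths whose types must differ by \eqref{cond:two_vert_same_type}, so after the identification two special paths meet at a non-extremal vertex, violating \eqref{cond:sppaths_emb_or_closed}, and folding $e$ onto $e'$ only makes this worse. You flag this as ``the main obstacle'' but do not resolve it, and it is not resolvable in this form --- the marking $\mOmega$ and conditions \eqref{cond:Omega_vertex}--\eqref{cond:Omega_path} exist precisely because such identifications cannot be carried out inside $\Delta$. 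Second, the inference that $x$ and $y$ lie in distinct components of $F$ because $F$ is a forest is a non sequitur and false in general; Case~\ref{case:same_cc} of the proof of Lemma~\ref{lemma:delta_12} treats exactly the situation where they lie in the same component. Third, the last case of your part~(1) (``translate the loop edge \ldots\ or a configuration triggering a fold'') is a sketch of an intention, not an argument. Finally, the reduction ``$e\in\hOmega_3$ iff both endpoints lie in $\hOmega_3$'' does not match the paper's use of $\hOmega_3$: the paper's proof reads off that $e\in\hOmega_3$ exactly when $\dist(\alpha(e),\hOmega)\le 2$, so your endpoint criterion is already off by one.
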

		
		
		
		\begin{proof} We argue by contradiction and suppose either \eqref{cor:dist_no_loop_edge} or \eqref{cor:dist_at_most_3} fails. Thus we assume that $e \notin \hOmega_3$ as the roles of $e$ and $e'$ in \eqref{cor:dist_at_most_3} are interchangeable. By condition \eqref{cond:Omega_vertex} $x,y \in \hOmega_3$. Let $\gamma$ and $\gamma'$ be shortest paths from $\hOmega$ to $x$ and $y$ respectively. Observe that $\gamma$ has length exactly $3$, as otherwise $e$ would lie in $\hOmega_3$. Write $\gamma\equiv e_1,e_2,e_3$ and $\gamma' \equiv e_1',e_2',e_3'$. Note that $\bar\gamma=\bar\gamma'$ by Lemma~\ref{homotopyOmega3} (3), in particular the conclusion of Lemma~\ref{intersection} applies to $\gamma$ and $\gamma'$.
			
			\changedm{\eqref{cor:dist_no_loop_edge} Since $\gamma$ either turns at $\alpha(e_2)$ or $\alpha(e_3)$, it follows that $\alpha(e_2)$ or $\alpha(e_3)$ is at distance at most 1 of some basepoint of a loop edge of $\Delta$. If $\omega(e)$ is also the basepoint of a loop edge then there are two basepoints of loop edges that are at distance at most 4 apart, a contradiction to the assumption that special path are of length at least~5.}
			
			\eqref{cor:dist_at_most_3} Note that $e_1,e_2,e_3,e$ and $e_1',e_2',e_3',e'$ are reduced paths in $\Delta$ and that both $e_3,e$ and $e_3',e'$ are subpath of special paths $\delta$ and $\delta'$ as they cannot turn at $\alpha(e)$ and $\alpha(e')$, respectively. Therefore $\delta$ and $\delta'$ must be of the same type. This clearly contradicts condition \eqref{cond:two_vert_same_type}, see Figure \ref{fig:stay_in_O3}.
			\begin{figure}[htb]
				\center\input{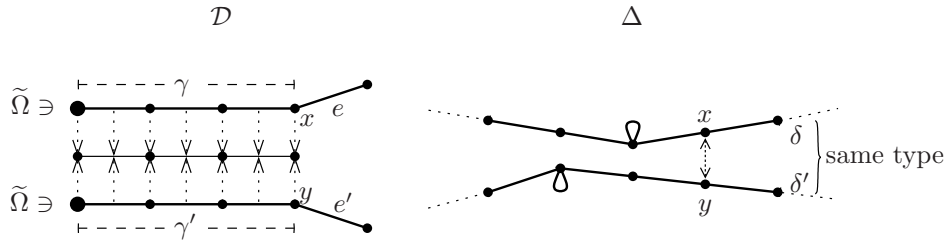}
				\caption{Corollary~\ref{cor:dist}~\eqref{cor:dist_at_most_3} : a contradiction to condition \eqref{cond:two_vert_same_type}.}
				\label{fig:stay_in_O3}
			\end{figure}
		\end{proof}}
		
		
		\begin{lemma} \label{lemma:EF_EOmega} Suppose that $(\Dc,\pairOmega)$ is a tame marked decomposition of minimal complexity. Then $EF = E\hOmega_3$ and $E\bar F = E\mOmega_3$.
		\end{lemma}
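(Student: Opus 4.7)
The plan is to exploit the secondary complexity component $c_7 = |E(\hOmega_3 \setminus F)|$: minimality will force $c_7 = 0$. The inclusion $EF \subseteq E\hOmega_3$ is immediate from condition \eqref{cond:Omega_edge}, so the content is proving the reverse inclusion.

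For that, I would argue by contradiction: suppose $e \in E\hOmega_3 \setminus EF$, and construct a tame marked decomposition $(\Dc',\pairOmega)$ of strictly smaller complexity. The idea is to ``absorb'' $e$ into $F$ without altering $\Theta$. Concretely, add to $\Gamma$ a fresh edge $e^*$ labeled $\ell(e)$ together with a fresh vertex for whichever of $\alpha(e),\omega(e)$ is not already a vertex of $F$; attach $e^*$ inside $\Gamma'$ so that $\alpha(e^*)=p(\alpha(e))$ whenever $\alpha(e)\in VF$, and otherwise to the corresponding fresh vertex (similarly for $\omega(e^*)$). Let $F'=F\cup\{e,\alpha(e),\omega(e)\}$ as a subgraph of $\Delta\setminus\Ec$, extend $p$ to a label-preserving morphism $p':F'\to\Gamma'$ by $p'(e)=e^*$ and by sending each newly added endpoint to its corresponding fresh vertex, and set $\Dc'=(M,\Gamma',\Delta,F',p',\Theta')$.

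The crucial point is that every freshly added vertex and edge of $\Gamma'$ is immediately collapsed by the quotient $\Gamma'\sqcup\Delta\to\Theta'$, so there is a natural label-preserving isomorphism $\Theta'\cong\Theta$. Consequently $\Delta$, $\bar\Delta$, $\Theta^f$, $\Delta^f$, $\mOmega$ and $\hOmega_3$ are all unchanged, so the full primary complexity and $c_6$ are unchanged, while $c_7$ drops by exactly one since the edge $e$ now lies in $F'$. Tameness is routine to verify: condition \eqref{cond:Omega_edge} holds for $e$ by hypothesis; condition \eqref{cond:Omega_vertex} is preserved because each newly added vertex of $F'$ is sent by $p'$ to a fresh vertex of $\Gamma'$ that has no other preimage in $F'$; and the remaining tameness conditions depend only on $(M,\Theta,\Delta,\mOmega)$, which do not change. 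This contradicts minimality.

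The second equality $E\bar F = E\mOmega_3$ then follows quickly. For $\subseteq$, $F\subseteq\hOmega_3$ projects into $\overline{\hOmega_3}\subseteq\mOmega_3$. For $\supseteq$, pick $\bar e\in E\mOmega_3$; by Lemma~\ref{lemma:Delta_folded} the graph $\Delta$ is folded, so Lemma~\ref{homotopyOmega3}(4) applies and any preimage $e$ of $\bar e$ in $\Delta\setminus\Ec$ lies in $\hOmega_3 = EF$, yielding $\bar e\in E\bar F$. The main (but mild) obstacle is verifying carefully that the absorption operation genuinely preserves $\Theta$ up to label-preserving isomorphism and does not break any of the tameness clauses; once this bookkeeping is granted, minimality of $c_7$ drives the conclusion.
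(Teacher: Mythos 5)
Your proof is correct and follows essentially the same route as the paper: the paper also absorbs a stray edge $e\in E\hOmega_3\setminus EF$ into $F$ by gluing a fresh edge $f$ onto $\Gamma$ with $p'(e)=f$, observes that $\Theta$, $\Theta^f$ and hence the primary complexity and $c_6$ are unchanged while $c_7$ drops by one, and then deduces $E\bar F=E\mOmega_3$ from Lemma~\ref{homotopyOmega3}~(4). Your extra bookkeeping about fresh endpoint vertices and condition \eqref{cond:Omega_vertex} is a harmless elaboration of the same construction.
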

		\begin{proof} By condition \eqref{cond:Omega_edge} $EF \subset E\hOmega_3$. First we show that $EF = E\hOmega_3$. Suppose by contradiction that there is an edge $e \in E\hOmega_3 \backslash EF$. Put $M' = M$, $\Delta' = \Delta$, $\Theta' = \Theta$ and $\mOmega' = \mOmega \subset \Theta$. Put $\Gamma' = (\Gamma \cup f) / \sim$ for some new edge $f$ where $\alpha(f) \sim p(\alpha(e))$ if $\alpha(e) \in F$ and $\omega(f) \sim p(\omega(e))$ if $\omega(e) \in F$. In all cases let $F' = F \cup e$ and define $p'|_F = p$ and $p(e) = f$. This does not change $\Delta$, $\Theta$ or $\Theta^f$, so that the primary complexity does not change. Note moreover that $|E\hOmega_3' \backslash EF'| = |E\hOmega_3 \backslash EF|-1$ and that $|E\Theta| = |E\Theta|$ so that $c_7' = c_7 - 1$. Lastly note that $c_*' = c_*$ so that $(\Dc',\pairOmega')$ satisfies condition \eqref{cond:M_large}. This is a contradiction to the minimality of the complexity of $(\Dc,\pairOmega)$.
		
			It remains to show that $E\mOmega_3 = E \bar F$. Since we know that $E\hOmega_3 = EF$ it is enough to show that $\hOmega_3$ is the preimage of $\mOmega_3$ in $\Delta \backslash \Ec$ which is the content of Lemma~\ref{homotopyOmega3}~(4).
		\end{proof}
		
		\begin{lemma} \label{lemma:IIIA} Let $(\Dc,\pairOmega)$ be a tame marked decomposition of minimal complexity. Suppose $e$ and $e'$ are distinct edges of $\bar \Delta$ having the same initial vertex and same label. Then $\omega(e) \neq \omega(e')$.
		\end{lemma}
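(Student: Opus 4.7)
The plan is to argue by contradiction: suppose $\omega(e) = \omega(e')$, and derive a decomposition of strictly smaller primary complexity. First I would lift $e, e'$ to edges $\tilde e, \tilde e' \in \Delta$. By Lemma~\ref{lemma:delta_12}~\eqref{cond:no_loop_on_special_path}, neither $e$ nor $e'$ can be a loop edge in $\bar\Delta$, since they share a vertex and have the same label; hence $\tilde e, \tilde e'$ are non-loop edges in $\Delta$. Since $\Delta$ is folded by Lemma~\ref{lemma:Delta_folded} and $\tilde e \neq \tilde e'$ have the same label, their initial vertices and their terminal vertices must be distinct; as each pair projects to the same vertex of $\bar\Delta$, all four lie in $F$ with $p(\alpha(\tilde e)) = p(\alpha(\tilde e'))$ and $p(\omega(\tilde e)) = p(\omega(\tilde e'))$. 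Corollary~\ref{cor:dist}~\eqref{cor:dist_at_most_3} applied with $x = \alpha(\tilde e)$, $y = \alpha(\tilde e')$ then forces $\tilde e, \tilde e' \in \hOmega_3$, and by Lemma~\ref{lemma:EF_EOmega} both are edges of $F$.

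Next I would identify the special paths $\delta \ni \tilde e$ and $\delta' \ni \tilde e'$. If $\delta$ and $\delta'$ share a common type, then $\alpha(\tilde e) \neq \alpha(\tilde e')$ are distinct vertices lying on special paths of the same type whose $p$-images coincide, directly contradicting condition \eqref{cond:two_vert_same_type}. Since $\ell(\tilde e) = \ell(\tilde e') =: s$ is forced to lie in both types, the only remaining scenario is that the types are $\{s, t_1\}$ and $\{s, t_2\}$ with $t_1 \neq t_2$.

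This distinct-type scenario is what I expect to be the main obstacle, and I would handle it by mirroring the surgery of Case~\ref{case:dx_neq_dy} in the proof of Lemma~\ref{lemma:delta_12}. Concretely, after applying unfoldings of types \ref{unfold:cc_vertex}, \ref{unfold:remove_cc} and \ref{unfold:artificial_segment} so that $\alpha(\tilde e), \alpha(\tilde e'), \omega(\tilde e), \omega(\tilde e')$ become the sole representatives of their respective components of $F$, and unfolding $\delta, \delta'$ along any shared extremal edges, I would collapse $\delta \cup \delta'$ to a single vertex $x'$ while replacing the (at most four) boundary loop edges by three loop edges carrying the labels $s, t_1, t_2$ and identifying $\tilde e$ with $\tilde e'$. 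Since two contractible subcomplexes of $\Delta$ are collapsed to a single point while two special paths disappear, a direct Euler-characteristic bookkeeping mirroring Case~\ref{case:dx_neq_dy} should yield $c_1' \leq c_1$, $c_2' = c_2 - 1$ and $c_*' \leq c_*$ for the resulting tame marked decomposition $(\Dc',\emptyset)$, contradicting minimality.

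The main technical hurdle is to verify that this surgery genuinely yields a tame marked decomposition: one must check that $\Theta'$ remains connected with surjective $\pi_1(\Theta')\to W(M)$, that the new $\Delta'$ with its three free loops still satisfies the conditions defining an $M$-special graph (in particular \eqref{cond:Delta_relator} for the surviving special paths), and that the sub-complexities $c_3, c_4$ do not offset the decrease in $c_2$. Once these checks succeed, the contradiction with the minimality of $(\Dc, \pairOmega)$ completes the proof.
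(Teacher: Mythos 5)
Your opening is sound and matches the paper's own first step: neither $e$ nor $e'$ is a loop edge, the lifts $\tilde e,\tilde e'$ have distinct endpoints in the folded graph $\Delta$, Corollary~\ref{cor:dist}~\eqref{cor:dist_at_most_3} puts them in $\hOmega_3$, and Lemma~\ref{lemma:EF_EOmega} puts them in $F$. But at exactly that point you abandon the information you just gained. Since $\tilde e,\tilde e'\in EF$, the edges $e,e'$ have lifts $\hat e=p(\tilde e)$ and $\hat e'=p(\tilde e')$ in $\Gamma$; these are two \emph{distinct} edges of $\Gamma$ with the same label, the same initial vertex and the same terminal vertex. The paper simply unfolds $F$ to isolated vertices (types~\ref{unfold:cc_vertex} and~\ref{unfold:artificial_segment}) and folds $\hat e\sim\hat e'$ \emph{inside $\Gamma$}: this kills a bigon, so $\betti(\Gamma)$ and hence $\betti(\Theta)$ drop by one while $\Delta$ and $\Sppaths$ are untouched, giving $c_1'=c_1-1$ immediately, with no case analysis on types and no special-graph conditions to re-verify.

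Your alternative surgery on $\Delta$ in the distinct-types case has a genuine gap: the bookkeeping does not mirror Case~\ref{case:dx_neq_dy}. There, collapsing two special paths of the \emph{same} type merges four boundary loop edges into \emph{two}, so $\chi(\Delta)$ gains $+2$ from deleted loop edges and $-1$ from the point identification, for a net $+1$; together with $\cc(F)$ dropping by one this makes $\betti(\Theta)$ drop by $2$, matching the loss of two special paths, whence $c_1'=c_1$. In your situation the types are $\{s,t_1\}$ and $\{s,t_2\}$ with $t_1\neq t_2$, so the four boundary loops can only be merged into \emph{three} (labels $s,t_1,t_2$): $\chi(\Delta')=\chi(\Delta)+1-1=\chi(\Delta)$, $\betti(\Theta)$ drops by at most $1$, while $|\Sppaths|$ drops by $2$, so $c_1'=c_1+1$ and the complexity \emph{increases}. (There is also a smaller inconsistency: since $\tilde e\in EF$, the vertices $\alpha(\tilde e)$ and $\omega(\tilde e)$ lie in the same component of $F$, so they cannot both survive an unfolding of type~\ref{unfold:cc_vertex} as "sole representatives"; and extremal edges may lie on several special paths by condition~\eqref{cond:sppaths_emb_or_closed}, which your collapse does not account for.) The missing idea is that the redundancy created by $\omega(e)=\omega(e')$ lives in $\Gamma$, not in $\Delta$, and is harvested by a fold in $\Gamma$ rather than by surgery on the special graph.
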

		\begin{proof} Assume that $\omega(e)=\omega(e')$. Let $\tilde e$ and $\tilde e'$ be lifts in $\Delta$ of $e$ and $e'$ respectively.  Since $\Delta$ is folded by Lemma~\ref{lemma:Delta_folded} the origin of $\tilde e$ and $\tilde e'$ are different, and thus both $\tilde e$ and $\tilde e'$ must lie in $\hOmega_3$ by Corollary~\ref{cor:dist}~\eqref{cor:dist_at_most_3}. The goal is to construct a tame marked decomposition $(\Dc',\pairOmega')$ of smaller complexity. 
				
				By Lemma \ref{lemma:EF_EOmega} $e$ and $e'$ lie in $\bar F$ so that there are lifts $\hat e,\hat e'$ of $e,e'$ in $\Gamma$. Note that $\omega(\hat e)=\omega(\hat e')$. Let $\Dc^u$ be the decomposition obtained from $\Dc$ by performing unfolding of type \ref{unfold:cc_vertex} on each connected component of $F$, followed by performing on each resulting vertex an unfolding of type \ref{unfold:artificial_segment}. Since $EF^u = \emptyset$ has no edges and since $p^u$ is injective on $VF^u$ the marked decomposition $(\Dc^u,\emptyset)$ satisfies condition \eqref{cond:Omega}. In fact $(\Dc^u,\emptyset)$ is a tame marked decomposition with the same primary complexity as $(\Dc,\pairOmega)$ by Lemma \ref{lemma:unfold_same_cpx}. We use the same notation for $\hat e,\hat e'$ and their obvious images in $\Gamma^u$.
				
				We define a new decomposition $\Dc'$ as follows: let $\Gamma':=\Gamma^u/\hat e \sim \hat e'$. This is possible since $\ell(\hat e) = \ell(\hat e')$ by hypothesis. Put $\Delta':= \Delta^u$, $F':=F^u$ and $p':=\phi \circ p^u$ where $\phi:\Gamma^u \to \Gamma'$ is the folding map. Notice that $(\Dc',\emptyset)$ is a tame marked decomposition such that $\betti(\Theta') = \betti(\Theta^u)-1$ so that $c_1' = c_1^u-1 = c_1 - 1$ contradicting the minimality of the complexity of the tame marked decomposition $(\Dc,\pairOmega)$.
			\begin{figure}
				\begin{center}\input{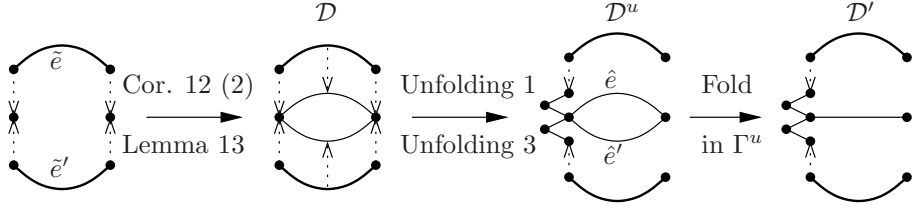}\end{center}
				\caption{Lemma \ref{lemma:IIIA}: contradicting the minimality of the complexity of $\Dc$}
				\label{fig:step2_IIIA}
			\end{figure}
		\end{proof}
		
		The following shows that in order to admit some marking with \eqref{cond:Omega}, we only need to verify \eqref{cond:Omega_vertex}, \eqref{cond:Omega_edge} and \eqref{cond:Omega_complexity}.
		\begin{lemma} \label{lemma:Omega_path} Let $(\Dc,\pairOmega)$ be a marked decomposition satisfying conditions \eqref{cond:Omega_vertex}, \eqref{cond:Omega_edge} and \eqref{cond:Omega_complexity}. Then there is another marking $\hOmega'$ such that $(\Dc,\pairOmega')$ satisfies \eqref{cond:Omega_vertex}, \eqref{cond:Omega_edge}, \eqref{cond:Omega_complexity} and \eqref{cond:Omega_path}.
		\end{lemma}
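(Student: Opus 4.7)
The plan is to define $\mOmega'$ by iteratively enlarging $\mOmega$ so as to close it up under \eqref{cond:Omega_path}. Set $\mOmega_0 = \mOmega$. Given $\mOmega_i$, if there is a reduced path $\gamma$ of length at most $8$ in $\overline{\Delta \backslash \Ec}$ with both endpoints in $V\mOmega_i$ but $\gamma \not\subset \mOmega_i$, choose one and set $\mOmega_{i+1} = \mOmega_i \cup \gamma$; otherwise halt and put $\mOmega' = \mOmega_i$. Each iteration strictly enlarges a subgraph of the finite graph $\overline{\Delta \backslash \Ec}$, so the process terminates. By construction $\mOmega'$ satisfies \eqref{cond:Omega_path}. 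Since $\mOmega \subset \mOmega'$ implies $\hOmega \subset \hOmega'$ and hence $\hOmega_3 \subset \hOmega_3'$, conditions \eqref{cond:Omega_vertex} and \eqref{cond:Omega_edge} are automatically preserved.

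The remaining task, which I expect to be the main obstacle, is to show that condition \eqref{cond:Omega_complexity} is preserved at each iteration. Assume inductively that $\mOmega_i$ satisfies it, and consider the step adding $\gamma$. Let $k$ be the number of edges of $\gamma$ not already in $E\mOmega_i$ and $m$ the number of vertices of $\gamma$ not in $V\mOmega_i$. Then $|E\mOmega_{i+1}| = |E\mOmega_i| + k$ and $\chi(\mOmega_{i+1}) = \chi(\mOmega_i) + m - k$, so the LHS of \eqref{cond:Omega_complexity} grows by $k$ while the RHS grows by $8(k-m)$. Preservation therefore reduces to the inequality $k \leq 8(k - m)$, equivalently $8m \leq 7k$. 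Since $\gamma$ has length at most $8$ we have $k \leq 8$, so it is enough to establish the combinatorial bound $m \leq k - 1$.

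For this bound, I would use that $\mOmega_i$ is a subgraph, so any edge in $E\mOmega_i$ has both endpoints in $V\mOmega_i$; contrapositively, each vertex of $\gamma$ not in $V\mOmega_i$ forces both adjacent edges of $\gamma$ to lie outside $E\mOmega_i$. Writing $\gamma = v_0, e_1, v_1, \ldots, e_\ell, v_\ell$ with $v_0, v_\ell \in V\mOmega_i$, the new interior vertices form maximal consecutive runs $v_a, \ldots, v_b$, and each run of size $r = b - a + 1$ forces $r + 1$ consecutive new edges $e_a, \ldots, e_{b+1}$. If $R$ denotes the number of such runs and $I$ the number of additional new edges whose endpoints both already lie in $V\mOmega_i$, then $k = m + R + I$; since $\gamma \not\subset \mOmega_i$ we have $R + I \geq 1$, giving $k \geq m + 1$. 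Combined with $k \leq 8$ this yields $8m \leq 8(k-1) \leq 7k$, so \eqref{cond:Omega_complexity} holds for $\mOmega_{i+1}$, completing the plan.
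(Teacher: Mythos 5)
Your proposal is correct and is essentially the paper's argument: the key point in both is that attaching a reduced path of length at most $8$ with endpoints in the marking but not contained in it adds at most $8$ edges while adding strictly more edges than vertices, so the quantity $|E\mOmega| + 8\chi(\mOmega)$ does not increase and condition \eqref{cond:Omega_complexity} is preserved. The only cosmetic difference is that you saturate by iterating until no violating path remains, whereas the paper selects a marking of minimal Euler characteristic among those satisfying \eqref{cond:Omega_vertex}--\eqref{cond:Omega_complexity} and derives a contradiction from a violating path.
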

		\begin{proof} Note that there are only finitely many markings. Thus there exists a marking $\hOmega'$ such that $\mOmega'$ is of minimal Euler characteristic among all markings satisfying conditions \eqref{cond:Omega_vertex}, \eqref{cond:Omega_edge} and \eqref{cond:Omega_complexity}. We show that $(\Dc,\pairOmega')$ also satisfies condition \eqref{cond:Omega_path}. Suppose by contradiction that there exists a reduced path $\gamma$ of length at most $8$ in $\overline{\Delta \backslash \Ec}$ having both endpoints in $\mOmega'$ but such that $\gamma$ is not entirely contained in $\mOmega'$. Clearly $\gamma$ has more edges outside $\mOmega'$ than it has vertices outside $\mOmega'$. Therefore the graph $\mOmega'' = \mOmega' \cup \gamma$ has Euler characteristic strictly smaller than $\mOmega'$. Moreover $\mOmega''$ has at most 8 more edges than $\mOmega'$. Therefore, we have 
			\[ |E\mOmega''| + 8\chi(\mOmega'') \leq |E\mOmega'| + 8\chi(\mOmega') \] so that condition \eqref{cond:Omega_complexity} is satisfied by $\mOmega''$. Lastly, conditions \eqref{cond:Omega_vertex} and \eqref{cond:Omega_edge} are satisfied by $\hOmega''$ since $\hOmega'' \supset \hOmega'$.
			In conclusion, $\mOmega'$ was not of minimal Euler characteristic, a contradiction. See Figure \ref{fig:Omega_path} for an illustration.
			\begin{figure}
				\begin{center}\input{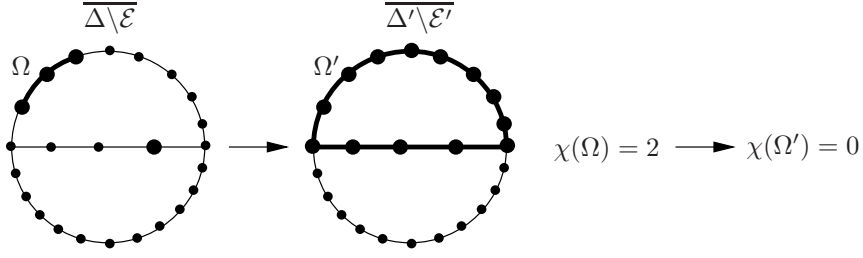}\end{center}
				\caption{Lemma \ref{lemma:Omega_path}: reducing $\chi(\mOmega)$.}
				\label{fig:Omega_path}
			\end{figure}	
		\end{proof}
		
		\begin{proposition}\label{prop:main_tech}
			Let $(\Dc,\pairOmega)$ be a tame marked decomposition of minimal complexity. Then $\Theta$ is folded.
		\end{proposition}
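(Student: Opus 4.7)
My plan is to argue by contradiction: assume $\Theta$ is not folded, so there exist distinct edges $e_1, e_2 \in E\Theta$ with $\alpha(e_1) = \alpha(e_2) = v$ and $\ell(e_1) = \ell(e_2)$. Each $e_i$ is the image in $\Theta$ of either a unique edge $\hat e_i \in E\Gamma$ (i.e.\ $e_i \in \bar\Gamma$) or a unique edge $\tilde e_i \in E\Delta \setminus EF$ (i.e.\ $e_i \in \bar\Delta \setminus \bar F$). In each of the three resulting cases the aim is to exhibit a tame marked decomposition of strictly smaller complexity.

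In Case A, both $e_i \in \bar\Gamma$, one folds in $\Gamma$: the edges $\hat e_1, \hat e_2$ share an initial vertex (since $V\Gamma \to V\Theta$ is injective) and a label. Setting $\Gamma' = \Gamma/(\hat e_1 \sim \hat e_2)$, $p' = \phi \circ p$ where $\phi$ is the fold, and $\Delta' = \Delta$, $F' = F$ gives a Stallings fold $\Theta \to \Theta'$, so $\Theta^f$, $\Delta^f$ and hence $c_3, c_4, c_5$ are preserved; either $\omega(\hat e_1) = \omega(\hat e_2)$ and $\betti(\Theta)$ drops by one (so $c_1$ decreases), or the primary complexity is unchanged and $c_6$ drops by one. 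In Case B, both $e_i \in \bar\Delta \setminus \bar F$, the established lemmas combine to yield an immediate contradiction: by Lemma~\ref{lemma:Delta_folded} one has $\alpha(\tilde e_1) \neq \alpha(\tilde e_2)$ in $\Delta$, so both lie in $VF$ with a common $p$-image; Lemma~\ref{lemma:delta_12}~\eqref{cond:no_loop_on_special_path} rules out either $\tilde e_i$ being a loop of $\Delta$ (else $e_1, e_2$ would be adjacent in $\bar\Delta$ with the same label, one a loop); Corollary~\ref{cor:dist}~\eqref{cor:dist_at_most_3} then forces $\tilde e_1, \tilde e_2 \in \hOmega_3$; and Lemma~\ref{lemma:EF_EOmega} gives $E\hOmega_3 = EF$, contradicting $\tilde e_i \notin EF$.

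The main obstacle is Case C, where $e_1 \in \bar\Gamma$ comes from $\hat e_1 \in E\Gamma$ and $e_2 \in \bar\Delta \setminus \bar F$ comes from $\tilde e_2 \in E\Delta \setminus EF$, with $p(\alpha(\tilde e_2)) = \alpha(\hat e_1)$. Here I first apply unfoldings of types~\ref{unfold:cc_vertex} and~\ref{unfold:artificial_segment} (not affecting the primary complexity by Lemma~\ref{lemma:unfold_same_cpx}) to replace $F$ by a collection of isolated vertices on which $p$ is injective, so that the empty marking trivially satisfies \eqref{cond:Omega}. If $\tilde e_2$ is non-loop, I enlarge $F$ by adjoining $\tilde e_2$ (and $\omega(\tilde e_2)$ if not already in $VF$) with $p'(\tilde e_2) = \hat e_1$ and $p'(\omega(\tilde e_2)) = \omega(\hat e_1)$, identifying $\omega(\hat e_1)$ with $p(\omega(\tilde e_2))$ in $\Gamma$ beforehand if the latter disagrees with $\omega(\hat e_1)$; this realizes the Stallings fold of $e_1, e_2$ in $\Theta$ and drops $c_1$ when the terminal vertices of $e_1, e_2$ agree in $\Theta$, and $c_6$ otherwise. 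If $\tilde e_2$ is a loop, Lemma~\ref{lemma:delta_12}~\eqref{cond:no_loop_on_special_path} forces $e_1 \notin \bar F$ (else $e_1, e_2$ would be adjacent in $\bar\Delta$ with the same label, one a loop), hence $\hat e_1 \notin p(EF)$; I then identify $\omega(\hat e_1)$ with $\alpha(\hat e_1)$ in $\Gamma$ and delete $\hat e_1$, preserving surjectivity onto $W(M)$ because the label $\ell(\hat e_1)$ is still represented at $v$ by the loop $\tilde e_2$, and again a component of the primary complexity or $c_6$ drops.

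The most delicate point is verifying, for each Case C subcase, that the new decomposition remains tame: condition \eqref{cond:pi1surj} follows as the new $\Theta'$ is obtained from $\Theta$ by a Stallings fold, condition \eqref{cond:inj_edges_sppaths} is preserved because $\Delta$ is unchanged and the fold identifies edges across $\Gamma$ and $\Delta$ rather than internally to a special path, and condition \eqref{cond:M_large} follows because $c_*$ does not increase; satisfying \eqref{cond:Omega} may require producing a non-empty marking capturing the vertex identifications introduced by the surgery, after which Lemma~\ref{lemma:Omega_path} supplies a marking also satisfying \eqref{cond:Omega_path}.
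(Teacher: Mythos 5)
Your overall strategy is the paper's: assume $\Theta$ is not folded, realize the Stallings fold at the level of the decomposition, and contradict minimality via $c_6$ (or earlier complexities). Your Case A is the paper's fold of type \eqref{fold:GG}, and your Case B is exactly the paper's Claim~1 (Lemma~\ref{lemma:Delta_folded} + Lemma~\ref{lemma:delta_12}\eqref{cond:no_loop_on_special_path} + Corollary~\ref{cor:dist}\eqref{cor:dist_at_most_3} + Lemma~\ref{lemma:EF_EOmega}), so those parts are fine. The problem is your Case C surgery for non-loop $\tilde e_2$: you realize the fold by \emph{adjoining the edge $\tilde e_2$ to $F'$} with $p'(\tilde e_2)=\hat e_1$. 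This decomposition cannot be tamely marked. Condition \eqref{cond:Omega_edge} forces $\tilde e_2\in\hOmega_3'$, hence $\mOmega'\neq\emptyset$; since $\mOmega'$ is a nonempty forest (Lemma~\ref{lemma:O3}), $\chi(\mOmega')\geq 1$; but identifying a single edge of $\Delta$ with an edge of $\Gamma$ identifies no two elements of $\Delta$ with each other, so $\chi(\Delta')-\chi(\bar\Delta')$ is still $0$ after your preliminary unfoldings, and \eqref{cond:Omega_complexity} would require $|E\mOmega'|\leq 8(0-\chi(\mOmega'))\leq -8$, which is impossible. Your closing remark that one "may require producing a non-empty marking" glosses over precisely this obstruction, and Lemma~\ref{lemma:Omega_path} cannot help since it presupposes \eqref{cond:Omega_complexity}.

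The paper avoids this by never putting an edge into $F'$: in its fold of type \eqref{fold:GD} it \emph{deletes} $\hat e$ from $\Gamma$ and lets the edge of $\Theta'$ be carried by $\tilde e'\in\Delta$ alone, adding to $F'$ at most the isolated vertex $\omega(\tilde e')$ (or identifying $\omega(\hat e)$ with $p(\omega(\tilde e'))$ when $\omega(\tilde e')\in F$). Then the marking only ever needs to grow by a single vertex $v'$ (when $\omega(e),\omega(e')\in\bar\Delta$ get identified), and the increase $\chi(\mOmega')=\chi(\mOmega^1)+1$ is exactly offset by $\chi(\bar\Delta')=\chi(\bar\Delta)-1$ in \eqref{cond:Omega_complexity}. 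A second, smaller omission: in your Case A you do not address condition \eqref{cond:Omega_vertex} for the new decomposition, which can fail when the fold in $\Gamma$ identifies the $p$-images of two vertices of $F$ not already in $\hOmega_3$; the same vertex-augmentation of the marking is needed there. You should replace your Case C construction by the paper's (delete $\hat e_1$, keep $\tilde e_2$) and carry out the marking verification for all cases.
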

		
		\begin{proof} Assume by contradiction that $\Theta$ is not folded. Thus there are $e,e'\in E\Theta$ such that $\alpha(e)=\alpha(e')$ and $\ell(e) = \ell(e')$. Let $\Theta' = \Theta / e \sim e'$. We shall exhibit a tame marked decomposition $(\Dc',\pairOmega')$ of $\Theta'$ with the same primary complexity as $(\Dc,\pairOmega)$. This will immediately contradict the minimality of the complexity of $(\Dc,\pairOmega)$ as $c_6' = |E\Theta'| = |E\Theta| - 1 = c_6 - 1$.
			
			\setcounter{claim}{0}
			\begin{claim} \label{claim:folding_DD} If $e,e' \in \bar \Delta$ and \changed{$\tilde e, \tilde e' \in \Delta$} are any lifts of $e,e'$ respectively then \changed{$\tilde e$} and \changed{$\tilde e'$} lie in $F$. In particular there are lifts \changed{$\hat e, \hat e' \in \Gamma$} of $e, e'$ respectively.
			\end{claim}
			
			Suppose that $e, e' \in \bar \Delta$. Then neither $e$ nor $e'$ is a loop edge by condition \eqref{cond:no_loop_on_special_path}. Let \changed{$\tilde e$ and $\tilde e'$} be lifts in $\Delta$ of $e, e'$ respectively. Since $\Delta$ is folded \changed{$\alpha(\tilde e) \neq \alpha(\tilde e')$}. Therefore \changed{$\tilde e, \tilde e' \in \hOmega_3$} by Corollary~\ref{cor:dist}~\eqref{cor:dist_at_most_3}. Lastly Lemma~\ref{lemma:EF_EOmega} implies that \changed{$\tilde e, \tilde e' \in F$} which finishes the proof of Claim~\ref{claim:folding_DD}.

\smallskip \changed{The claim implies that either $e,e'\in\bar \Gamma$ or that, possibly after exchanging the roles of $e$ and $e'$, that $e\in\bar\Gamma$ and $e'\in \bar\Delta\backslash \bar\Gamma$. Note that in the second case $e'$ has a unique lift $\tilde e'$ to $\Delta$. Depending on which case we are in we define a decomposition $\Dc'$ of $\Theta'$ as follows. For any edge $f$ in $\bar\Gamma$ we denote its unique lift to $\Gamma$ by $\hat f$.}			

			\begin{figure}[htb]
				\center \input{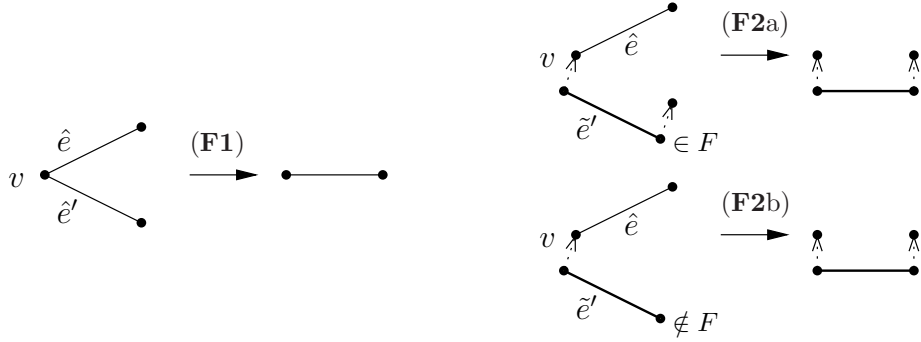}
				\caption{Proposition \ref{prop:main_tech}: Finding a decomposition of $\Theta'$.}
				\label{fig:types_of_folds}
			\end{figure}
			\begin{customenum}{\textbf{F}}
				\item \label{fold:GG} \changed{If $e,e'\in\bar\Gamma$ then}  $\Dc'$ is defined as follows: $\Delta' := \Delta$, $F' := F$, \changed{$\Gamma' := \Gamma / \hat e \sim \hat e'$} and $p':=\phi \circ p:F' \to \Gamma'$ where $\phi : \Gamma \to \Gamma'$ is the folding map.
				\item \label{fold:GD} If \changed{$e\in\bar\Gamma$ and $e'\in \bar\Delta\backslash \bar\Gamma$} then $\Dc'$ is defined as follows: in all cases $\Delta' := \Delta$.
				\begin{enumerate}
					\item If \changed{$\omega(\tilde e')\in F$ then $\Gamma' := (\Gamma \backslash \hat e) / \omega(\hat e) \sim p(\omega(\tilde e'))$, $F':= F$ and $p':= \phi \circ p : F' \to \Gamma'$ where $\phi: \Gamma \backslash \hat e \to \Gamma'$ is the obvious map. \label{fold:GDa}}
					\item If $\omega(\tilde e')\notin F$ then \changed{$\Gamma' := \Gamma \backslash \hat e$}, $F':= F \cup \omega(\tilde e')$ and $p':F' \to \Gamma'$ is given by $p'|_{F}:= p$ and \changed{$p'(\omega(\tilde e')):= \omega(\hat e)$}. \label{fold:GDb}
				\end{enumerate}
			\end{customenum}
			
			Note that $\betti(\Theta')\le\betti(\Theta)$, $\Delta'=\Delta$, $(\Theta')^f = \Theta^f$ and $(\Delta')^f=\Delta^f$. It follows that the primary complexity of $\Dc'$ is not greater than that of $\Dc$. Moreover $|E\Theta'| < |E\Theta|$ so that $c_6' < c_6$ and $\Dc'$ has strictly smaller complexity than $\Dc$. Also the potential $c_*'$ of $\Dc'$ is not greater than that of $\Dc$ so condition \eqref{cond:M_large} holds for $\Dc'$. Since $\Theta$ maps onto $\Theta'$ and since condition \eqref{cond:pi1surj} holds for $\Dc$, it also holds for $\Dc'$. Moreover, since \eqref{cond:two_vert_same_type} holds for $\Dc$ by Lemma~\ref{lemma:delta_12}, the fold did not identify two edges in the image of a single special path. Thus condition \eqref{cond:inj_edges_sppaths} holds for $\Dc'$ as \eqref{cond:inj_edges_sppaths} holds for $\Dc$.
			
			To conclude Proposition \ref{prop:main_tech} we need to define a marking $\hOmega'$ of $\Dc'$ that satisfies  \eqref{cond:Omega}. By Lemma \ref{lemma:Omega_path} it suffices to find a marking of $\Dc'$ satisfying conditions \eqref{cond:Omega_vertex}, \eqref{cond:Omega_edge} and \eqref{cond:Omega_complexity}. Let $\mOmega^1 \subset \overline{\Delta' \backslash \Ec'}$ be the image of $\mOmega \subset \overline{\Delta \backslash \Ec}$ under the folding map $\Theta \to \Theta'$. If at most one of the two edges $e$, $e'$ lies in $\bar \Delta$ and both $\omega(e), \omega(e')$ lie in $\bar \Delta$, then let $\mOmega' := \mOmega^1 \cup \{v'\}$ where $v'$ is the image of $\omega(e)$ and $\omega(e')$ under the fold (see Figure \ref{fig:step2_add_vertex}). Otherwise define $\mOmega' := \mOmega^1$.
			\begin{figure}
				\center\input{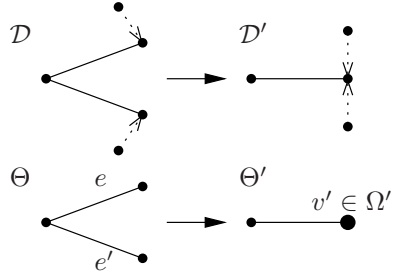}
				\caption{Proposition \ref{prop:main_tech}: constructing a new marking $\mOmega$ satisfying \eqref{cond:Omega}.}
				\label{fig:step2_add_vertex}
			\end{figure}
			
			\begin{claim} \label{claim:Omega_vertex} $ \hOmega'$ satisfies \eqref{cond:Omega_vertex}. 
			\end{claim}
			Let $x$ and $y$ be two vertices of \changed{$F'\subset\Delta'=\Delta$} such that $p'(x) = p'(y)$. \changed{We need to show that $x,y\in\hOmega_3'$. If}  $p(x) = p(y)$ then $x,y\in\hOmega_3$ since \eqref{cond:Omega_vertex} holds for $\mOmega$, the claim follows \changed{as  $\hOmega_3 \subset \hOmega_3'$}. \changed{Thus we can assume that $p(x)\neq p(y)$, i.e. that $p$ maps $\{x,y\}$ to $\{\omega(e),\omega(e')\}$}. Without loss of generality we assume $p(x)=\omega(e)$ and $p(y)=\omega(e')$. 
			
			If at most one of $e,e'$ lies in $\bar \Delta$ then $p(x) = p(y) = v' \in \mOmega'$ by construction of $\mOmega'$, so in particular $x,y \in \hOmega_3'$. If $e,e'\in \bar \Delta$, then \changed{let} $\tilde e$ and $\tilde e'$ be the (unique) lifts of $e$ and $e'$ in $\bar \Delta$ such that $\omega(\tilde e) = x$ and $\omega(\tilde e') = y$. By Claim \ref{claim:folding_DD} both $\tilde e$ and $\tilde e'$ lie in $\hOmega_3$, and in particular $\omega(\tilde e)$ and $\omega(\tilde e')=y$ lie in $\hOmega_3$. Recall that $\hOmega_3 \subset \hOmega_3'$ so that $x,y \in \hOmega_3'$. This finishes the proof that $ \hOmega'$ satisfies \eqref{cond:Omega_vertex}.
			
			\begin{claim} $\hOmega'$ satisfies \eqref{cond:Omega_edge}.
			\end{claim}
			
			By construction $EF = EF'$, and $\hOmega_3 \subset \hOmega_3'$ so that $EF'  = EF \subset E\hOmega_3 \subset E\hOmega_3'$.
			
			\begin{claim} $\mOmega'$ satisfies \eqref{cond:Omega_complexity}.
			\end{claim}
			
			\changed{We need to show that} that \[ |E\mOmega'| \leq 8(\chi(\Delta') - \chi(\bar\Delta') - \chi(\mOmega')).\]
			\changed{Recall} that by construction $\Delta' = \Delta$ and $|E\mOmega'| \leq |E\mOmega|$. \changed{As} $\mOmega$ satisfies \eqref{cond:Omega_complexity} this implies that $$|E\mOmega'| \leq |E\Omega|\le 8(\chi(\Delta) - \chi(\bar\Delta) - \chi(\mOmega))=8(\chi(\Delta') - \chi(\bar\Delta) - \chi(\mOmega)).$$ Thus it is enough to show that \[\chi(\bar\Delta') + \chi(\mOmega') \leq \chi(\bar\Delta) + \chi(\mOmega).\]
			
			Clearly we may focus on what happens to $e \cup e' $. We first show that $\chi(\bar \Delta') \leq \chi(\bar \Delta)$. Indeed, the only possibility for this not to hold is if $e \cup e' \subset \bar \Delta$ and $\omega(e) = \omega(e')$. But then either both $e$ and $e'$ are loop edges with the same base vertex and the same labeling, \changed{a contradiction to the validity of condition \eqref{cond:no_loop_on_special_path} for $(\Dc,\Omega)$}, or both $e$ and $e'$ are non-loop edges of $\bar \Delta$ with $\omega(e) = \omega(e')$ and the same labeling, which contradicts Lemma \ref{lemma:IIIA}.
			
			Since \changed{$\mOmega \subset \bar \Delta$} and $\mOmega^1$ is the image of $\mOmega$ in $\Theta'$ the arguments above also apply to $\mOmega^1$ and show that $\chi(\mOmega^1) \leq \chi(\mOmega)$. So if $\mOmega = \mOmega^1$ we are done. The only obstacle left is when $\chi(\mOmega') = \chi(\mOmega^1) + 1$, that is when one vertex is added to $\mOmega'$. Note that this happens whenever at most one of $e$, $e'$ is in $\bar \Delta$ and both $\omega(e), \omega(e')$ lie in $\bar \Delta$. But this in turns implies that $\chi(\bar \Delta') = \chi(\bar \Delta)-1$ so adding these contributions up yields 
			\[\chi(\bar\Delta') + \chi(\mOmega') = \chi(\bar \Delta)- 1 + \chi(\mOmega^1) + 1 \leq \chi(\bar \Delta) + \chi(\mOmega).\]
			
			This finishes to prove that $\mOmega$ satisfies conditions \eqref{cond:Omega_vertex}, \eqref{cond:Omega_edge} and \eqref{cond:Omega_complexity}. Thus up to replacing $\mOmega'$ by $\mOmega''$ as in Lemma \ref{lemma:Omega_path} we can assume $\mOmega'$ satisfies \eqref{cond:Omega}. In conclusion we have shown the existence of a tame marked decomposition $(\Dc',\pairOmega')$ of smaller complexity than $(\Dc,\pairOmega)$ a contradiction. This finishes the proof of Proposition \ref{prop:main_tech}.
		\end{proof}
		
	\section{The folded graph} \label{sec:folded_graph}
		
		\begin{lemma} \label{lemma:no_loop_in_Gamma} Let $(\Dc,\pairOmega)$ be a tame marked decomposition of minimal complexity. Then there is no loop edge in $\Gamma$.
		\end{lemma}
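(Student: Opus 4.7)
The plan is to argue by contradiction: suppose $\Gamma$ contains a loop edge $e$ at some vertex $v$ with label $s \in S$. The guiding idea is to \emph{transfer} $e$ out of $\Gamma$ into $\Delta$ as a new singleton $M$-special component. This moves a loop edge from $\Theta^f\setminus\Delta^f$ into $\Delta^f$, decreasing $c_3$ by one without changing $c_1, c_2, c_4$, contradicting minimality.

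First, since $F$ is a forest by Lemma~\ref{lemma:O3}, I would apply unfoldings of type~\ref{unfold:cc_vertex} and~\ref{unfold:artificial_segment} (as at the start of the proof of Lemma~\ref{lemma:Delta_folded}) to replace $(\Dc,\pairOmega)$ by a tame marked decomposition $(\Dc^u, \emptyset)$ of the same primary complexity, in which $F^u$ is a set of isolated vertices, $p^u$ is injective on $VF^u$, and $p^u(VF^u)$ consists entirely of new vertices introduced by the type-\ref{unfold:artificial_segment} unfoldings (so $p^u(VF^u)\cap V\Gamma = \emptyset$, and in particular $v \notin p^u(VF^u)$). The loop $e$ is untouched by the unfoldings, so it remains a loop at $v$ in $\Gamma^u$.

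Next, I define $\Dc'$ by the following surgery: set $M':=M$, $\Gamma':=\Gamma^u\setminus\{e\}$, $\Delta':=\Delta^u\sqcup\{v_{\mathrm{new}},\tilde e\}$ (with $\tilde e$ a loop at $v_{\mathrm{new}}$ labeled $s$), $F':=F^u\cup\{v_{\mathrm{new}}\}$, and $p'$ extending $p^u$ by $p'(v_{\mathrm{new}}):=v$. Then $\Theta'$ is isomorphic to $\Theta^u$ as an $S$-labeled graph (the loop at $\bar v$ now originates from $\Delta'$), so $\betti(\Theta')$, $\chi(\Delta')$, $|\Sppaths'|$, $(\Theta')^f$, connectedness and surjectivity of $\pi_1(\Theta')\to W(M)$, and the potential $c_*$ are all unchanged, giving $c_1'=c_1$, $c_2'=c_2$, $c_4'=c_4$ together with conditions~\eqref{cond:pi1surj}, \eqref{cond:inj_edges_sppaths} and \eqref{cond:M_large}. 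Since $F'$ has no edges and $p'$ is injective on $VF'$, the empty marking trivially satisfies~\eqref{cond:Omega}, so $(\Dc',\emptyset)$ is tame.

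The main obstacle is to verify $c_3'=c_3-1$. The new loop $\bar{\tilde e}$ at $v$ lies in $\bar\Delta'$ and hence in $\Delta'^f$, so the claim reduces to showing that the loop $\bar e$ at $v$ in $\Theta^f$ was not in $\Delta^f$ --- equivalently, that no edge of $F$ has $p$-image equal to $e$. If such an edge $\tilde e_0\in F\subset \Delta\setminus\Ec$ existed then, being a non-loop edge of $\Delta$, it would satisfy $\alpha(\tilde e_0)\neq\omega(\tilde e_0)$ yet $p(\alpha(\tilde e_0))=p(\omega(\tilde e_0))=v$, so both endpoints would project to the same vertex of $\bar\Delta$. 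But both lie on the special path containing $\tilde e_0$, hence on special paths of the same type, contradicting condition~\eqref{cond:two_vert_same_type} of Lemma~\ref{lemma:delta_12}. This yields the strict drop $c_3'=c_3-1$, contradicting the minimality of the complexity of $(\Dc,\pairOmega)$.
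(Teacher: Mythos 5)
Your proof is correct and follows essentially the same route as the paper: the same surgery (delete the loop from $\Gamma$ and re-attach it as a new singleton component of $\Delta$, glued via $F$ at the old basepoint), the same use of condition \eqref{cond:two_vert_same_type} to see that $\bar e \notin \Delta^f$, and the same conclusion $c_3' = c_3 - 1$. The only difference is cosmetic: you fully unfold and pass to the empty marking first, whereas the paper keeps $\mOmega$ and applies a single unfolding of type~\ref{unfold:artificial_segment} at the new vertex.
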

		\begin{proof}
			Suppose there is a loop edge $e$ in $\Gamma$. We claim that $\bar e \notin \bar \Delta$. Indeed, suppose that $\bar e$ has a lift $\tilde e \in F$. By definition of $F$, the edge $\tilde e$ is not a loop edge. Thus $\alpha(\tilde e) \neq \omega(\tilde e)$ are distinct vertices of $\Delta$ contained in a special path and such that $p(\alpha(\tilde e)) = p(\omega(\tilde e))$, a contradiction with condition \eqref{cond:two_vert_same_type}. Thus $\bar e \notin \bar \Delta$ and in particular $\bar e \notin \bar F$.
			
			We construct a tame marked decomposition $(\Dc',\pairOmega')$ of smaller complexity as follows. Let \changed{$\Delta':=\Delta \cup \Delta^0$ where $\Delta^0$ is a graph consisting of a single loop edge  $\tilde f$ with $\ell(\tilde f) = \ell(e)$ and a single vertex $\alpha(\tilde f)$.} Put $\Gamma' := \Gamma \backslash e$, $F' := F \cup \alpha(\tilde f)$ and $p':F' \to \Gamma'$ defined by $p'|_F := p$ and $p'(\alpha(\tilde f)) = \alpha(e)$. Finally, put $\mOmega':= \mOmega \subset \Delta'$ and apply unfolding of type \ref{unfold:artificial_segment} to $\alpha(\tilde e)$, so as to ensure that $\mOmega'$ satisfies condition \eqref{cond:Omega_vertex}. Clearly $(\Dc',\pairOmega')$ satisfies conditions \eqref{cond:Omega_edge}, \eqref{cond:Omega_complexity}, \eqref{cond:Omega_path}, \eqref{cond:pi1surj} and \eqref{cond:inj_edges_sppaths} since $(\Dc,\pairOmega)$ does. Remark that $\Theta' = \Theta$, $\chi(\Delta') = \chi(\Delta)$, $\cc(\Delta') = \cc(\Delta) +1$ and $|\Ec'| = |\Ec| +1$ so that $c_1' = c_1, c_2' = c_2$ and $c_*' = c_*$. Both $\Theta$ and $\Theta'$ are folded by Proposition \ref{prop:main_tech} so that $c_3' = |E(\Theta'^f \backslash \Delta'^f)| = |E(\Theta^f \backslash \Delta^f)| - 1 = c_3 - 1$. Thus $(\Dc',\pairOmega')$ is a tame marked decomposition of smaller complexity, which contradicts the minimality of the complexity of $(\Dc,\pairOmega)$.
		\end{proof}
		
		In order to state the next lemma, we extend some notation from special paths to any $S$-labeled path as follows. We say that a path $\gamma$ is of \emph{alternating type} if $\ell(\gamma)$ is an alternating word, i.e.\ a word of the form $stst \ldots stst$ or $stst \ldots tsts$. Moreover, the \emph{type} of $\gamma$ is defined as $\{s,t\}$.
		\begin{lemma}\label{lemma:alt_path_in_Omega}
			Let $(\Dc,\pairOmega)$ be a tame marked decomposition of minimal complexity. Let $\gamma$ be a path of alternating type $\{s,t\}$ in $\bar \Delta$ that does not meet any special path of the same type. Suppose all inner edges of $\gamma$ are non-loop edges.
			\begin{enumerate}
				\item Then all but possibly the first five and last five edges of $\gamma$ lie in $\mOmega$. \label{lemma:alt_path_in_O3_in_O}
				\item If moreover $l(\gamma) \geq 3$ and the extremal edges of $\gamma$ are loop edges, then the inner subpath of $\gamma$ lifts to a path in $\Gamma$. \label{lemma:alt_path_in_O3_LE}
			\end{enumerate}
		\end{lemma}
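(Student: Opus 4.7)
The key combinatorial observation is that no three consecutive inner edges of $\gamma$ can lift to a coherent reduced path in $\Delta$. Indeed, since $\gamma$ has alternating type $\{s,t\}$ and avoids special paths of that type, three consecutive inner non-loop edges $e_i, e_{i+1}, e_{i+2}$ must lie on three \emph{distinct} special paths $\delta_i, \delta_{i+1}, \delta_{i+2}$. A hypothetical lift $\tilde e_i, \tilde e_{i+1}, \tilde e_{i+2}$ forming a reduced path in $\Delta$ would, by condition~\eqref{cond:sppaths_emb_or_closed}, force the two intermediate lifted vertices to be common extremal vertices of consecutive $\delta$'s; in particular both endpoints of $\tilde e_{i+1}$ would be extremal vertices of $\delta_{i+1}$. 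Since special paths are embedded (Remark after Lemma~\ref{lemma:Delta_folded}) and have length at least $5$ by~\eqref{cond:Delta_length_sppath}, this is a contradiction.

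Call an inner vertex $u_j$ of $\gamma$ a \emph{break} if it has at least two preimages in $\Delta$. Break vertices lie in $\bar F$ and hence in $\mOmega_3$ by condition~\eqref{cond:Omega_vertex}. The first step implies that no two consecutive inner vertices $u_j, u_{j+1}$ can both be non-break, since otherwise the three inner edges $e_j, e_{j+1}, e_{j+2}$ would lift coherently. Furthermore, a non-break inner vertex has a unique preimage in $\Delta$ which, by condition~\eqref{cond:sppaths_emb_or_closed}, is a common extremal vertex of the two adjacent special paths, and hence a loop vertex of $\Delta$ by condition~\eqref{cond:Delta_relator}.

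For each inner edge $e_j$ with $3 \leq j \leq n-2$ one of two scenarios occurs: either both $u_{j-1}$ and $u_j$ are break, in which case both endpoints of $\tilde e_j$ lie in $V\hOmega_3$ by~\eqref{cond:Omega_vertex}; or exactly one of them is non-break and thus lifts to a loop vertex of $\Delta$, so Corollary~\ref{cor:dist}~\eqref{cor:dist_no_loop_edge} (read with the appropriate orientation, the break endpoint playing the role of the multiply-lifted vertex and the non-break endpoint the role of the loop basepoint) places $\tilde e_j$ in $\hOmega_3$. In either case $\tilde e_j \in EF$ by Lemma~\ref{lemma:EF_EOmega}, so $e_j \in E\mOmega_3$. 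The reduced subpath $e_3, \ldots, e_{n-2}$ therefore sits in $\mOmega_3$. By Lemma~\ref{homotopyOmega3}(1), each component of $\mOmega_3$ is obtained from $\mOmega$ by attaching trees of depth at most three at single vertices, so a reduced path in $\mOmega_3$ can stray from $\mOmega$ for at most three consecutive edges at each of its extremities. Consequently $e_6, \ldots, e_{n-5}$ all lie in $\mOmega$, establishing part~(1).

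For part~(2), the hypothesis that $e_1$ and $e_n$ are loop edges forces $u_1$ and $u_{n-1}$ to be loop vertices of $\bar\Delta$ whose loop edges lift uniquely to loop edges of $\Delta$. Applying Corollary~\ref{cor:dist}~\eqref{cor:dist_no_loop_edge} at each end in the same spirit as above (now using the loop vertex provided by $e_1$ or $e_n$ itself, rather than the one produced by the coherent-lift analysis) extends the conclusion $\tilde e_j \in \hOmega_3$ to the remaining edges $e_2$ and $e_{n-1}$. Thus every inner edge of $\gamma$ lies in $\bar F$, so its $p$-preimage in $\Gamma$ is well defined; these preimages concatenate into the required reduced path in $\Gamma$ lifting the inner subpath. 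The main obstacle is the third paragraph: translating the vertex-level break/non-break dichotomy into $\tilde e_j \in \hOmega_3$ requires care with Corollary~\ref{cor:dist}, especially in the mixed case where one must verify that the hypotheses (a multiply-lifted endpoint together with a loop basepoint at the opposite endpoint) are indeed met.
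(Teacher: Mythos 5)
Your opening observation (no reduced lift of three consecutive inner edges of $\gamma$ exists in $\Delta$, since it would contain two consecutive edges of a special path and force that path to have type $\{s,t\}$) and your closing appeal to Lemma~\ref{homotopyOmega3}~(1) both match the paper. The gap is in the middle, in the edge-by-edge claim that every $e_j$ with $3\le j\le n-2$ lies in $E\mOmega_3$. In your ``both break'' case you deduce $\tilde e_j\in EF$ from Lemma~\ref{lemma:EF_EOmega}, but that lemma identifies $EF$ with $E\hOmega_3$, a statement about \emph{edges}, while you have only placed the two \emph{endpoints} of $\tilde e_j$ in $\hOmega_3$. If both endpoints are at distance exactly $3$ from $\hOmega$ (which condition~\eqref{cond:Omega_vertex} does not rule out), the edge lies only in $\hOmega_4$, and neither clause of Corollary~\ref{cor:dist} covers this configuration: there is no loop basepoint at either end, and no edge with the same label is guaranteed at the second lift of either endpoint. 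A second inaccuracy compounds this in your mixed case: a non-break inner vertex need not lift to a loop vertex. Condition~\eqref{cond:sppaths_emb_or_closed} allows two special paths to share extremal \emph{edges} as well as an extremal vertex, so the unique lift is only within distance one of a loop basepoint; this defeats the hypothesis ``$\omega(e)$ is the basepoint of a loop edge'' in Corollary~\ref{cor:dist}~\eqref{cor:dist_no_loop_edge}.

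The repair is to prove less, which is what the paper does. Your first observation yields only that any subpath of $\gamma$ whose edges and inner vertices all lie outside $\mOmega_3$ has length at most $2$ (such a subpath lifts to a reduced path in $\Delta$, because its edges are outside $E\bar F=E\mOmega_3$ and its inner vertices have unique preimages). Hence everything except possibly the extremal edges of $\gamma$ lies in $\mOmega_4$, and Lemma~\ref{homotopyOmega3}~(1) applied with $k=4$ already gives part~(1). For part~(2) you do not need each inner edge in $\mOmega_3$ individually: since the inner subpath $\gamma'$ lies in $\mOmega_4$, by Lemma~\ref{homotopyOmega3}~(1) it suffices to show that the two vertices $\alpha(\gamma')$ and $\omega(\gamma')$ lie in $\mOmega_3$, and there Corollary~\ref{cor:dist}~\eqref{cor:dist_no_loop_edge} genuinely applies because the extremal edges of $\gamma$ are honest loop edges whose lifts are loop edges of $\Delta$.
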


\begin{proof} Note first that any subpath of $\gamma$ whose edges and inner vertices lie outside $\mOmega_3$ is of length at most $2$. Indeed this is true as such a path lifts to a reduced path in $\Delta$ and any reduced path $\eta \subset \Delta$ of length greater than two \changedm{such that all inner edges of $\eta$ are non-loop edges} necessarily contains a subpath of length $2$ of some special path. 
It follows that all of $\gamma$ except possibly the initial and the terminal edge lies in $\mOmega_4$. \changed{As $\gamma$ is a reduced path it} follows from Lemma~\ref{homotopyOmega3}~(1) that all edges except possibly the initial and terminal subpath of length at most 5 lie in $\mOmega$. This proves~\eqref{lemma:alt_path_in_O3_in_O}.

			Suppose now that the extremal edges of $\gamma$ are loop edges. Let $\gamma'$ be the inner subpath of $\gamma$. Rephrasing  \eqref{lemma:alt_path_in_O3_LE} we seek to show that $\gamma' \subset \bar F$. Recall from Lemma~\ref{lemma:EF_EOmega} that $E\bar F = E\mOmega_3$ so it is sufficient to show that $\gamma' \subset \mOmega_3$. We have already proven in~\eqref{lemma:alt_path_in_O3_in_O} that $\gamma' \subset \mOmega_4$. In view of Lemma~\ref{homotopyOmega3}~(1) any path in $\mOmega_4$ connecting two vertices in $\mOmega_3$ is entirely contained in $\mOmega_3$. Thus it is enough to show that $\alpha(\gamma'), \omega(\gamma') \in \mOmega_3$. We concentrate on $\alpha(\gamma')$, the case of $\omega(\gamma')$ being identical. Let $e_1,e_2,e_3$ be the first three edges of $\gamma$. Thus $\alpha(\gamma') = \omega(e_1) = \alpha(e_2)$, the edge $e_1$ is a loop and $e_2$ is not.
			
			Assume that $\alpha(e_2) \notin \mOmega_3$. In particular $e_1, e_2$ are not in $\mOmega_3$ and therefore have unique lifts $\tilde e_1, \tilde e_2$, respectively, in $\Delta$. Note that $\omega(\tilde e_1) = \alpha(\tilde e_2)$ as otherwise condition \eqref{cond:Omega_vertex} would not be satisfied. Moreover, $\tilde e_1$ is a loop edge as otherwise $\alpha(\tilde e_1), \omega(\tilde e_1)$ would fail to satisfy Condition~\eqref{cond:no_loop_on_special_path} in Lemma~\ref{lemma:delta_12}. Thus the subpath $e_1,e_2$ lifts to a path $\tilde e_1,\tilde e_2$ in $\Delta$ where $\tilde e_1$ is a loop edge and $\tilde e_2$ is not. \changedm{Recall the observation at the beginning of the proof that no subpath of length $3$ of $\gamma$ lifts to a path in $\Delta$. Thus if we choose any lift $\tilde e_3\in \Delta$ of $e_3$ we have $\omega(\tilde e_2) \neq \alpha(\tilde e_3)$. Finally Corollary \ref{cor:dist}~\eqref{cor:dist_no_loop_edge} applies to $e := \tilde e_2\inv$, $x := \alpha(e)$ and $y := \alpha(\tilde e_3)$ since $\omega(e) = \alpha(\tilde e_2)$ is the basepoint of the loop edge $\tilde e_1$. In conclusion $\tilde e_2 \in \hOmega_3$ and in particular $\alpha(e_2) \in \mOmega_3$, a contradiction. Thus~\eqref{lemma:alt_path_in_O3_LE} holds.}
\end{proof}


		\begin{obs} \label{obs:vertex_lifts_in_Gamma}
			Let $(\Dc,\pairOmega)$ be a marked decomposition. We can always assume a vertex $v \in \Theta$ lifts to a vertex $\hat v$ in $\Gamma$. Indeed, if it is not the case, then $v$ lifts to a vertex $\tilde v$ in $\Delta \backslash F$. Then we can add a new vertex $\hat v$ to $\Gamma$, add $\tilde v$ to $F$ and declare $p(\tilde v) = \hat v$ without affecting tameness or complexity.
		\end{obs}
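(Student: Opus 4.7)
The plan is to make explicit the modification proposed in the statement and then check, condition by condition, that it changes neither the graph $\Theta$, the complexity, nor tameness. Given $v \in \Theta$ that does not lift to $\Gamma$, it automatically lifts to some $\tilde v \in \Delta \setminus F$, since every vertex of $\Theta = (\Gamma \sqcup \Delta)/\{x \sim p(x)\}$ has a preimage in $\Gamma \sqcup \Delta$ and any vertex of $F$ already lifts to $\Gamma$ via $p$. Define $\Dc'$ by setting $\Gamma' := \Gamma \sqcup \{\hat v\}$, $\Delta' := \Delta$, $F' := F \cup \{\tilde v\}$, $p'|_F := p$ and $p'(\tilde v) := \hat v$, and leaving $M$ and $\pairOmega$ untouched.

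First I would observe that $\Theta' = \Theta$ as $S$-labeled graphs. The only new vertex of $\Gamma'$ is $\hat v$, which the quotient defining $\Theta'$ immediately identifies with $\tilde v$ via the extended $p'$, so the quotients $(\Gamma' \sqcup \Delta')/\{x \sim p'(x)\}$ and $(\Gamma \sqcup \Delta)/\{x \sim p(x)\}$ coincide. In particular $\bar \Delta$, $\Theta^f$ and $\Delta^f$ are all unchanged.

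Next I would check the complexity. The primary components $c_1, \ldots, c_5$ depend only on $\betti(\Theta)$, $\chi(\Delta)$, $|\Sppaths|$, $\Theta^f$, $\Delta^f$ and $|E\Delta|$, none of which were altered. The secondary components $c_6 = |E\Theta|$ and $c_7 = |E(\hOmega_3 \setminus F)|$ are also unchanged: we added no edges to $\Theta$ or $F$, and $\hOmega_3$ depends only on $\pairOmega$ and $\Delta$. Similarly $c_* = \betti(\Theta) + \cc(\Delta) - |\Ec|$ is unchanged.

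Finally I would verify tameness. Conditions \eqref{cond:pi1surj}, \eqref{cond:Omega_complexity}, \eqref{cond:Omega_path}, \eqref{cond:inj_edges_sppaths} and \eqref{cond:M_large} involve only data that is preserved, and \eqref{cond:Omega_edge} is trivial since no edges were added to $F$. The only point needing a moment's care is \eqref{cond:Omega_vertex}: given distinct $x, y \in V F'$ with $p'(x) = p'(y)$, either $\{x,y\} \subset VF$, in which case the condition for $F$ applies, or one of them equals $\tilde v$; but $\hat v$ is a brand-new vertex of $\Gamma'$ hit by $p'$ only via $\tilde v$, so no new identification is produced. Consequently there is no genuine obstacle here — the statement is essentially a bookkeeping remark, whose purpose is to allow later arguments to assume without loss of generality that a distinguished vertex of $\Theta$ is represented by a vertex of $\Gamma$.
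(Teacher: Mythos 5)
Your construction is exactly the one the paper intends: the observation carries its own one-line justification (adjoin an isolated vertex $\hat v$ to $\Gamma$, put $\tilde v$ into $F$, set $p(\tilde v)=\hat v$), and your verification that $\Theta$, the complexities and all tameness conditions — in particular \eqref{cond:Omega_vertex}, since $\hat v$ is hit by $p'$ only through $\tilde v$ — are unaffected is correct and complete.
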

		
		\begin{lemma}\label{lemma:gamma_not_embedded} Let $(\Dc,\pairOmega)$ be a tame marked decomposition of minimal complexity.
			Suppose $\gamma$ is a path in $\Theta$ such that $\ell(\gamma) = s(ts)^{m_{st}-2}$. Then $\gamma$ is not embedded.
		\end{lemma}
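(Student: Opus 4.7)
The plan is to argue by contradiction, producing a tame marked decomposition of strictly smaller complexity via an Arzhantseva-Ol'shanskii move on $\gamma$. Suppose $\gamma$ is embedded. The key identity in $W(M)$ is
\[
\ell(\gamma)\cdot tst = s(ts)^{m_{st}-2}\cdot tst=(st)^{m_{st}}=1,
\]
so $\ell(\gamma)=_{W(M)}(tst)^{-1}=tst$, and the word $w=tst$ of length $q=3$ is a valid label for the new segment of an AO-move applied to $\gamma$.

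Next I would locate a subpath $\hat\gamma\subset\gamma$ of length at least $4$ whose inner vertices have valence $2$ in $\Theta$ and are distinct from the basepoint. Since $\Theta$ is folded by Proposition~\ref{prop:main_tech} and $\ell(\gamma)$ is reduced, $\gamma$ is a reduced path; embeddedness then gives $2m_{st}-2$ distinct vertices along $\gamma$. The number of ``bad'' inner vertices of $\gamma$ (those of valence $\neq 2$ in $\Theta$, or equal to the basepoint) is bounded linearly in $\betti(\Theta)$, which in turn is bounded in terms of $c_*$ via the structure of the decomposition. Condition~\eqref{cond:M_large} forces the length $2m_{st}-3$ of $\gamma$ to be at least $2\Konst\cdot 2^{c_*}-3$, which far exceeds the bound on bad vertices; hence the bad vertices partition $\gamma$ into arcs of good vertices, at least one of which has length $\geq 4$, yielding the desired $\hat\gamma$. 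Performing the AO-move with $\hat\gamma$ and $w$ removes $|\hat\gamma|$ edges and $|\hat\gamma|-1$ inner vertices of $\hat\gamma$ while adding a length-$3$ segment $\tilde\gamma$ from $\alpha(\gamma)$ to $\omega(\gamma)$; an Euler-characteristic check gives $\betti(\Theta')=\betti(\Theta)$ while $|E\Theta'|=|E\Theta|-(|\hat\gamma|-3)<|E\Theta|$.

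Setting $\Delta'=\Delta$ and updating $\Gamma'$, $F'$, $p'$ accordingly, the resulting decomposition $\Dc'$ has the same primary complexity $(c_1,\ldots,c_5)$ as $\Dc$ and strictly smaller $c_6$. Condition~\eqref{cond:pi1surj} is preserved by Lemma~\ref{lemma:fold_same_subgroup}, condition~\eqref{cond:inj_edges_sppaths} is immediate as $\Delta'=\Delta$, and $c_*'\leq c_*$ ensures condition~\eqref{cond:M_large}. A marking $\mOmega'$ satisfying~\eqref{cond:Omega} is obtained from $\mOmega$ via Lemma~\ref{lemma:Omega_path}, contradicting the minimality of $(\Dc,\pairOmega)$. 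The main obstacle is the possibility that $\hat\gamma$'s edges lie in $\bar\Delta$ rather than in $\bar\Gamma$: then removing them could damage a special path and violate the special graph axioms. Here one must argue, using Lemma~\ref{lemma:delta_12}~\eqref{cond:two_vert_same_type} together with the length bound $m_{st}-1<|\gamma|$ on special paths, that only a bounded portion of $\gamma$ can lie in $\bar\Delta$, so $\hat\gamma$ can be chosen with all of its edges in $\bar\Gamma$ and the AO-move is then a pure $\Gamma$-side surgery.
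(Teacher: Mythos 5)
Your overall strategy coincides with the paper's: assume $\gamma$ is embedded, locate a length-$4$ subpath with valence-$2$ inner vertices lying outside $\bar\Delta$, and perform an AO-move replacing it by a path reading $tst$, contradicting minimality. You also correctly identify the central obstacle, namely that the candidate subpath might lie in $\bar\Delta$. But your resolution of that obstacle is where the argument breaks down. The claim that ``only a bounded portion of $\gamma$ can lie in $\bar\Delta$'' does not follow from condition \eqref{cond:two_vert_same_type} together with the length bound on special paths. The graph $\bar\Delta$ is the image of $\Delta$ under the identifications prescribed by $F$ and $p$, and an embedded alternating $\{s,t\}$-path in $\bar\Delta$ need not lift to a path in $\Delta$: it can be assembled from short fragments of arbitrarily many special paths of types \emph{other} than $\{s,t\}$, consecutive fragments being joined because their endpoints are identified in $\Theta$. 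Condition \eqref{cond:two_vert_same_type} only forbids identifying two vertices lying on special paths of the \emph{same} type, so it puts no bound on the length of such a chain. The actual bound is the content of Lemma~\ref{lemma:alt_path_in_Omega}~\eqref{lemma:alt_path_in_O3_in_O} combined with the estimate $|E\mOmega|\le 16(c_*-1)-8$ from the proof of Lemma~\ref{lemma:O3}: all the jumps between fragments occur inside $\mOmega_3$ by condition \eqref{cond:Omega_vertex}, so each visit of $\gamma$ to a component of $\bar\Delta$ has at most $10$ edges outside $\mOmega$, and the total is then controlled by condition \eqref{cond:Omega_complexity}. This is exactly what the marking machinery exists for, and it cannot be bypassed.

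There is a second, quantitative gap. You bound the number of bad inner vertices linearly in $\betti(\Theta)$ and then assert that $\betti(\Theta)$ is bounded in terms of $c_*$. It is not: $c_*=\betti(\Theta)+\cc(\Delta)-|\Ec|$, and since each loop edge contributes $1$ to $\betti(\Theta)$ while a single component of $\Delta$ may carry many loop edges, $\betti(\Theta)$ exceeds $c_*$ by $|\Ec|-\cc(\Delta)$, which is unbounded. Moreover every basepoint of a loop edge met by $\gamma$ has valence at least $3$, so these vertices cannot be discarded. The paper's count avoids this by first collapsing each component of $\bar\Delta$ to a point, working in the graph $\Theta_c$ where $\betti(\Theta_c)=\betti(\Theta)-\betti(\bar\Delta)$, and organizing the estimate around visits of bad vertices; only then does the combination $\cc(\Delta)+\betti(\Theta)-|\Ec|=c_*$ emerge, making the comparison with $2m_{st}-3\ge 2\cdot\Konst\cdot 2^{c_*}-3$ legitimate. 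Your remaining steps (the identity $\ell(\gamma)=_W tst$ validating the AO-move, preservation of $\betti$, the drop in edge count, and the transfer of tameness via Lemma~\ref{lemma:Omega_path}) are in order.
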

		\begin{figure}
			\center \input{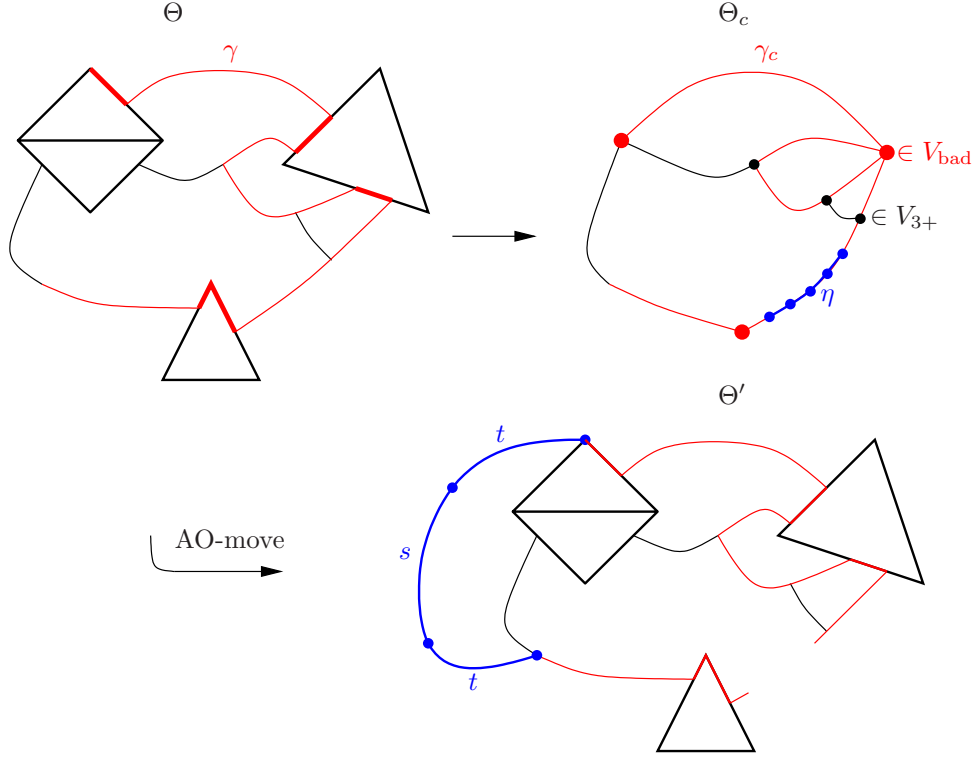}
			\caption{Lemma \ref{lemma:gamma_not_embedded}: doing an AO-move.}
			\label{fig:embedded_path}
		\end{figure}
		\begin{proof}
			\setcounter{claim}{0}
			Suppose that $\gamma$ is embedded in $\Theta$. Our goal is to show that since $m_{st}$ is \changed{sufficiently large by condition \eqref{cond:M_large}}, there is a subpath $\eta$ of $\gamma$ that has length $4$ lying outside of $\bar \Delta$ such that all inner vertices of $\eta$ have degree 2. If there is such a path, we can do the following AO-move: remove $\eta$ from $\Theta$ and glue a path of length three reading $tst$ connecting $\alpha(\gamma)$ and $\omega(\gamma)$. On the one hand, AO-moves do not affect condition \eqref{cond:pi1surj} by Lemma~\ref{lemma:fold_same_subgroup}. On the other hand this will reduce $c_4$ and thus produce a contradiction to the minimality of the complexity of $(\Dc,\pairOmega)$.
			
			We show that $\gamma$ contains 3 consecutive inner vertices of degree 2 in $\Theta \backslash \bar \Delta$. Let $\Theta_c$ be the graph obtained by collapsing each connected component of $\bar \Delta$ to a vertex. Let $V_{\bad}$ be the set of such such vertices in $\Theta_c$. Let $\gamma_c$ be the image of $\gamma$ under the map $\Theta \to \Theta_c$. Let $V_{3+}$ be the set of vertices that are not bad and of degree at least $3$ in $\Theta_c$. A \emph{visit} of a bad vertex $v \in V_{\bad}$ is a maximal subpath of $\gamma$ that is mapped to $v$. Let $C$ be the number of collapsed edges when mapping $\gamma \to \gamma_c$, and \changed{$N$} be the number of visits of bad vertices. The goal is to show that \[l(\gamma_c) \geq 3(\changed{N} + |V_{3+}|) + 4\] \changed{This is Claim~\ref{bound:length_gamma_c} below.} Indeed if this is the case, then there are three consecutive interior vertices of $\gamma_c$ that have degree $2$ and that are not bad vertices in $\Theta_c$. This immediately implies that $\gamma$ contains 3 consecutive inner vertices of degree 2 in $\Theta \backslash \bar \Delta$ as desired.
			
		\changed{	\begin{claim} $\gamma$ does not meet the image of a special path of type $\{s,t\}$. \label{claim:no_inter_sppath}
			\end{claim}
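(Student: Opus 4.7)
The plan is to argue by contradiction: suppose $\gamma$ visits some vertex $v$ of the image $\bar\delta$ of a special path $\delta$ of type $\{s,t\}$. Let $v_0, v_1, \ldots, v_{m_{st}-1}$ be the consecutive vertices of $\bar\delta$ and say $v = v_j$. The aim is to derive that $\gamma$ cannot be embedded, contradicting the standing assumption of Lemma~\ref{lemma:gamma_not_embedded}. Note first that $\bar\delta$ is embedded in $\Theta$: by Lemma~\ref{lemma:Delta_folded} and Remark~\ref{rem:sppath_contractible} the special path $\delta$ is embedded in $\Delta$, and condition~\eqref{cond:two_vert_same_type} of Lemma~\ref{lemma:delta_12} prevents two vertices of $\delta$ from being identified in $\Theta$.

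The first step is to show that $\gamma$ must follow $\bar\delta$ through $v$. Since $\Theta$ is folded by Proposition~\ref{prop:main_tech}, every label occurs on at most one edge at any given vertex. At an interior vertex $v_j$ of $\bar\delta$ the two adjacent edges of $\bar\delta$ already carry both labels $s$ and $t$, so by uniqueness they are the only $\{s,t\}$-labeled edges at $v_j$. Since $\gamma$'s labeling $s(ts)^{m_{st}-2}$ uses only letters in $\{s,t\}$, both the edge of $\gamma$ arriving at $v_j$ and the edge leaving $v_j$ must be edges of $\bar\delta$. Iterating, the intersection of $\gamma$ with $\bar\delta$ is a single contiguous subpath of $\bar\delta$ that extends maximally in both directions until $\gamma$ either terminates or reaches an endpoint $v_0$ or $v_{m_{st}-1}$.

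The second step is a length and parity observation. Since $|\gamma| = 2m_{st}-3 > m_{st}-1 = |\bar\delta|$ and $\bar\delta$ is embedded, $\gamma$ cannot be entirely contained in $\bar\delta$. Hence $\gamma$ reaches some endpoint of $\bar\delta$, say $\omega(\delta)$, at an interior position and still has further edges to traverse. At $\omega(\delta)$ the loop edge $f$ appearing in condition~\eqref{cond:Delta_relator} has a label pinned down by the equation $\delta \cdot f \cdot \delta^{-1} \cdot e = (st)^{m_{st}}$; reading off the $m_{st}$-th letter of this word shows that $\ell(f)$ is precisely the label the alternating word $s(ts)^{m_{st}-2}$ demands immediately after the $(m_{st}-1)$-th position. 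By folded-ness of $\Theta$, $\gamma$'s next edge is forced to be $f$, but then $\gamma$ returns at once to $\omega(\delta)$, producing a repeated vertex and contradicting that $\gamma$ is embedded.

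The main obstacle is the parity bookkeeping and the boundary cases. When $v$ coincides with an endpoint of $\bar\delta$, $\gamma$'s very first step may already be forced onto the adjacent loop (especially when $m_{st}$ is odd), giving an even quicker contradiction; when $v$ is itself an endpoint of $\gamma$, the path $\gamma$ follows $\bar\delta$ in a single direction and the length count $2m_{st}-3 > m_{st}-1$ still pushes it past an endpoint of $\bar\delta$ into the forced loop. All cases reduce to the same combination of folded-ness of $\Theta$ together with the label constraint coming from the relator in condition~\eqref{cond:Delta_relator}.
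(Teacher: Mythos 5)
Your proof is correct and follows essentially the same route as the paper's: foldedness of $\Theta$ traps $\gamma$ inside the subgraph consisting of $\bar\delta$ and its two boundary loop edges, and the length comparison $2m_{st}-3 > m_{st}-1$ forces $\gamma$ onto a loop edge, contradicting embeddedness. The paper states this in three sentences; your version merely spells out the same label-forcing and boundary bookkeeping in more detail.
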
 
			Suppose $\gamma$ does meet the image of a special path $\delta$ of type $\{s,t\}$. As $\Theta$ is folded $\gamma$ is contained in the subgraph consisting of $\delta$ and two loop edges $e$ and $f$ such that the path $\delta,f,\delta^{-1},e$ is labeled with the relation $(st)^{\pm m_{st}}$. As $\gamma$ is longer than $\delta$ it follows that $\gamma$ contains a loop edge, which is a contradiction.} 
		
			\begin{claim} $C \leq 10 \changed{N} + |E\mOmega|$. \label{claim:bound_collapsed_edges}
			\end{claim}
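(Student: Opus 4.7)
The plan is to analyse each visit separately using Lemma~\ref{lemma:alt_path_in_Omega}~\eqref{lemma:alt_path_in_O3_in_O} and then sum the contributions. First I would note that since $\gamma$ is assumed embedded, $\gamma$ contains no loop edge (a loop edge would force $\gamma$ to traverse the same vertex twice). Consequently every edge of $\gamma$ is a non-loop edge, and the same holds for every edge of every visit.

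Let $v_1,\ldots ,v_N$ enumerate the visits of bad vertices, and let $L_i$ denote the length of $v_i$, so that $\sum_i L_i = C$. Each $v_i$ is a subpath of $\gamma$ lying entirely in a single connected component of $\bar\Delta$, so $v_i\subset \bar\Delta$. Its label is a subword of $s(ts)^{m_{st}-2}$, hence an alternating word in $\{s,t\}$, so $v_i$ is a path of alternating type $\{s,t\}$ with all edges non-loop. Moreover by Claim~\ref{claim:no_inter_sppath} the path $\gamma$, and therefore $v_i$, does not meet any special path of type $\{s,t\}$. Thus the hypotheses of Lemma~\ref{lemma:alt_path_in_Omega}~\eqref{lemma:alt_path_in_O3_in_O} are satisfied by $v_i$, giving that all but possibly the first five and last five edges of $v_i$ lie in $\mOmega$. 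In particular at least $\max(L_i-10,0)$ edges of $v_i$ lie in $\mOmega$, and trivially $L_i-10\le \max(L_i-10,0)$.

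Since $\gamma$ is embedded in $\Theta$, the visits $v_1,\ldots,v_N$ are pairwise edge-disjoint subpaths of $\gamma$, hence also pairwise edge-disjoint in $\bar\Delta$. It follows that the $\mOmega$-edges contributed by distinct visits are distinct edges of $\mOmega$. Summing the inequalities from the previous paragraph we obtain
\[
C-10N \;=\; \sum_{i=1}^{N}(L_i-10) \;\le\; \sum_{i=1}^{N}\bigl|\,E(v_i)\cap E\mOmega\,\bigr| \;\le\; |E\mOmega|,
\]
which rearranges to $C\le 10N+|E\mOmega|$, as claimed.

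No serious obstacle is expected here: the only subtlety is that short visits ($L_i<10$) contribute a negative term on the left-hand side, but this is precisely the reason why working with the unclipped quantity $L_i-10$ (rather than $\max(L_i-10,0)$) is harmless. The whole argument rests on the embeddedness of $\gamma$, which is what lets us both eliminate loop edges from $\gamma$ and add up $\mOmega$-contributions without double-counting.
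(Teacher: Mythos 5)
Your proof is correct and follows essentially the same route as the paper: the paper's one-line argument is exactly that, by Claim~\ref{claim:no_inter_sppath} and Lemma~\ref{lemma:alt_path_in_Omega}~\eqref{lemma:alt_path_in_O3_in_O}, each visit has at most $10$ edges outside $\mOmega$, and summing over the $N$ visits gives the bound. You have merely made explicit the verification of the hypotheses of Lemma~\ref{lemma:alt_path_in_Omega} and the edge-disjointness of the visits, both of which are as you say.
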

			Indeed, combining Claim \ref{claim:no_inter_sppath} and Lemma \ref{lemma:alt_path_in_Omega} any visit of a bad vertex contains at most 10 edges outside of $\mOmega$.
			
			\begin{claim} $\sum_{v \in V_{\bad}} d(v) \geq 2 \changed{N} - 2$. \label{claim:degree_bad}
			\end{claim}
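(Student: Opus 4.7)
My plan is to use the embeddedness of $\gamma$ in $\Theta$ to relate the number of visits $N$ to the degrees of bad vertices inside $\gamma_c$, and then dominate by the degrees in $\Theta_c$. The key observation is that each visit of a bad vertex, unless it occurs at an endpoint of $\gamma_c$, is entered and exited by $\gamma_c$ via two distinct edges of $\Theta_c$ incident to that vertex; and since distinct visits use disjoint portions of $\gamma$, the associated incident edges of $\gamma_c$ are all distinct.

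In more detail, first I would note that the map $\Theta \to \Theta_c$ restricted to the non-collapsed edges of $\gamma$ is a bijection onto $E\gamma_c$, since collapsing only identifies vertices inside connected components of $\bar \Delta$, and $\gamma$ is embedded in $\Theta$. Consequently, for each bad vertex $v\in V_{\bad}$, the edges of $\gamma_c$ incident to $v$ are in one-to-one correspondence with the non-collapsed edges of $\gamma$ that enter or leave the preimage of $v$. For each visit of $v$ there are two such edges (one immediately before the visit, one immediately after), unless the visit contains an endpoint of $\gamma$, in which case at least one of the two may be missing.

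Since at most two visits can be at endpoints of $\gamma_c$ (namely the ones containing $\alpha(\gamma)$ and $\omega(\gamma)$), summing over all $N$ visits yields
\[
\sum_{v\in V_{\bad}} d_{\gamma_c}(v) \;\geq\; 2N - 2,
\]
where $d_{\gamma_c}(v)$ denotes the degree of $v$ inside $\gamma_c$. The inequality $d_{\gamma_c}(v) \leq d(v)$ is automatic because $\gamma_c$ is a subgraph of $\Theta_c$, and gives the claim.

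The only delicate point I would watch for is whether the ``entering'' and ``leaving'' edges at a single visit could coincide or be counted twice across visits. Both possibilities are ruled out by the embeddedness of $\gamma$ in $\Theta$ (and the observation above on the restricted map), so once that injectivity is recorded the bookkeeping is a straightforward case split on whether $0$, $1$ or $2$ visits sit at endpoints of $\gamma_c$, each case giving at least $2N-2$.
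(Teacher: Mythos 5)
Your argument is correct and is essentially the paper's own proof: the paper counts the oriented edges of $\gamma\cup\gamma^{-1}$ whose initial vertex lies in a visit but which do not themselves lie in a visit, which is exactly your ``entering/leaving edge-ends'' count, with the same use of embeddedness of $\gamma$ to guarantee distinctness and the same allowance of $-2$ for the two extremal visits. No further comment is needed.
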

			Consider the set of oriented edges of $\gamma \cup \gamma\inv$ whose initial vertex lies in a visit of a bad vertex but do not lie in a visit. There are at least $2\changed{N}-2$ such edges as $\gamma$ is simple. Each such edge is mapped to an edge starting at a bad vertex in $\Theta_c$, hence Claim \ref{claim:degree_bad} holds.
			
			\begin{claim} $\changed{N} \leq \betti(\Theta_c) + \cc(\Delta) - \frac{|V_{3+}|}{2}$. \label{claim:bound_visits}
			\end{claim}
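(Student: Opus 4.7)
The plan is to combine Claim~\ref{claim:degree_bad} with the defining degree bound $d(v)\geq 3$ for $v\in V_{3+}$ via the handshake lemma on $\Theta_c$. First I would partition $V(\Theta_c) = V_{\bad} \sqcup V_{3+} \sqcup V_{\mathrm{reg}}$, where $V_{\mathrm{reg}}$ denotes the set of non-bad vertices of degree at most $2$.

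The crucial preliminary step is to show that every regular vertex has degree exactly $2$. Non-bad vertices of $\Theta_c$ correspond bijectively to vertices of $\Gamma$ outside $p(F)$, and their degrees in $\Theta_c$ coincide with their degrees in $\Gamma$. An isolated such vertex would contradict connectedness of $\Theta$. A pendant such vertex $v$ with edge $e$ could be removed from $\Gamma$, producing a new decomposition $(\Dc', \pairOmega)$ in which $\Delta,\bar\Delta$ and the marking are unaffected, so all tameness conditions as well as $c_1, c_2, c_5, c_7$ are preserved, $\pi_1(\Theta')\to W(M)$ is still surjective, and $\Theta'$ remains folded by Proposition~\ref{prop:main_tech}. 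However, $e \in E(\Theta^f \setminus \Delta^f)$, so $c_3' = c_3 - 1$, contradicting minimality.

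With this in hand, handshaking combined with Claim~\ref{claim:degree_bad} and the defining bound for $V_{3+}$ gives
\[ 2|E(\Theta_c)| = \sum_{v \in V_{\bad}} d(v) + \sum_{v \in V_{3+}} d(v) + \sum_{v \in V_{\mathrm{reg}}} d(v) \geq (2N - 2) + 3|V_{3+}| + 2|V_{\mathrm{reg}}|. \]
Since $\Theta_c$ is connected we have $|E(\Theta_c)| = |V(\Theta_c)| + \betti(\Theta_c) - 1$, and $|V(\Theta_c)| = |V_{\bad}| + |V_{3+}| + |V_{\mathrm{reg}}|$. Substituting and noting that the $|V_{\mathrm{reg}}|$ contributions cancel, one obtains $N \leq \betti(\Theta_c) + |V_{\bad}| - \tfrac{|V_{3+}|}{2}$. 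Since $|V_{\bad}| = \cc(\bar\Delta) \leq \cc(\Delta)$ (each bad vertex is a collapsed component of $\bar\Delta$, and $\bar\Delta$ has at most as many components as $\Delta$), the claim follows.

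The main obstacle is the pendant-removal argument: I need to verify that each tameness condition genuinely persists after the surgery. This is routine since the surgery only alters $\Gamma$ and leaves $\Delta$, $\bar\Delta$, $\Ec$ and the marking $\mOmega$ untouched, so conditions \eqref{cond:Omega}, \eqref{cond:inj_edges_sppaths}, \eqref{cond:M_large} are automatic and \eqref{cond:pi1surj} follows from the fact that removing a pendant preserves the fundamental group of $\Theta$.
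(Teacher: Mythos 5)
Your proof is correct and follows essentially the same route as the paper: your handshake-lemma computation is algebraically identical to the paper's use of the identity $\betti(\Theta_c) = \sum_{v} \frac{\deg(v)-2}{2} + 1$ for the connected graph $\Theta_c$, combined with Claim~\ref{claim:degree_bad} and $|V_{\bad}| = \cc(\bar\Delta) \leq \cc(\Delta)$. Your explicit pendant-removal surgery merely fills in the step the paper dispatches with ``we can assume $\Theta$ has no vertices of degree~1.''
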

			As $\Theta_c$ is connected we have the following
			\[ \betti(\Theta_c) = \sum_{v \in V\Theta_c} \frac{\deg(v) - 2}{2} + 1.\]
			We can assume $\Theta$ has no vertices of degree 1, so all vertices of degree 1 in $\Theta_c$ are bad. Thus the following holds
			\[ \betti(\Theta_c) = \sum_{v \in V_{\bad}} \frac{\deg(v)}{2} - |V_{\bad}| + \sum_{v \in V_{3+}} \frac{\deg(v) - 2}{2} + 1\]
			Applying Claim \ref{claim:degree_bad} and using the fact that $|V_{\bad}| = \cc(\bar \Delta) \leq \cc(\Delta)$ we get
			\[ \betti(\Theta_c) \geq \changed{N} - 1 - \cc(\Delta) + \frac{|V_{3+}|}{2} + 1\]
			which establishes Claim \ref{claim:bound_visits}.
			
			\begin{claim} $l(\gamma_c) \geq 3(\changed{N} + |V_{3+}|) + 4$ \label{bound:length_gamma_c}
			\end{claim}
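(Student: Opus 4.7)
\medskip
\noindent\textbf{Proof plan for Claim~\ref{bound:length_gamma_c}.} My plan is to combine the length formula for $\gamma$ with the two preceding claims, and then absorb every remaining quantity into the potential $c_*$ so that the exponential bound from \eqref{cond:M_large} does the final work.

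First I would record that $\ell(\gamma)=s(ts)^{m_{st}-2}$ gives $l(\gamma)=2m_{st}-3$, hence
\[
  l(\gamma_c) \;=\; l(\gamma) - C \;\ge\; 2m_{st} - 3 - 10N - |E\mOmega|,
\]
where the last inequality is Claim~\ref{claim:bound_collapsed_edges}. Thus it suffices to establish the cleaner arithmetic inequality
\[
  2m_{st} \;\ge\; 13N + 3|V_{3+}| + |E\mOmega| + 7.
\]
For this, plugging Claim~\ref{claim:bound_visits} ($N\le \betti(\Theta_c)+\cc(\Delta)-|V_{3+}|/2$) into $13N+3|V_{3+}|$ gives, since $|V_{3+}|\ge 0$,
\[
  13N + 3|V_{3+}| \;\le\; 13(\betti(\Theta_c)+\cc(\Delta)) - \tfrac{7}{2}|V_{3+}| \;\le\; 13(\betti(\Theta_c)+\cc(\Delta)).
\]

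Next I would turn every quantity on the right into a linear function of $c_*=\betti(\Theta)+\cc(\Delta)-|\Ec|$. Since every loop edge of $\Ec$ embeds in $\bar\Delta$ (it is not in $F$) and contributes to $\betti(\bar\Delta)$, one has $\betti(\bar\Delta)\ge |\Ec|$, so
\[
  \betti(\Theta_c) \;=\; \betti(\Theta) - \betti(\bar\Delta) \;\le\; \betti(\Theta) - |\Ec|,
\]
which yields $\betti(\Theta_c)+\cc(\Delta)\le c_*$. For the marking edges I would reuse the very computation in the proof of Lemma~\ref{lemma:O3}, which shows $|E\mOmega|\le 16(c_*-1)-8 = 16c_*-24$ whenever $\cc(\Delta)\ge 1$ (the case $\cc(\Delta)=0$ is trivial as then $\mOmega$ is empty and $N=0$). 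Assembling these estimates one gets
\[
  13N + 3|V_{3+}| + |E\mOmega| + 7 \;\le\; 13c_* + (16c_*-24) + 7 \;=\; 29c_* - 17.
\]

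Finally I would invoke condition \eqref{cond:M_large}, which gives $2m_{st}\ge 12\cdot 2^{c_*}$, and verify the elementary inequality $12\cdot 2^{c_*}\ge 29c_*-17$ for every integer $c_*\ge 0$ (the base cases $c_*=0,1,2,3$ can be checked by hand and the exponential beats the linear function thereafter). This yields exactly $l(\gamma_c)\ge 3(N+|V_{3+}|)+4$. The only genuinely delicate point, and the one where I would spend care, is making sure that the bound on $|E\mOmega|$ from Lemma~\ref{lemma:O3} is applied with the correct hypothesis ($\cc(\Delta)\ge 1$), but the remaining edge case collapses the claim trivially; all other steps are bookkeeping around the exponential/linear comparison.
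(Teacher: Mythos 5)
Your argument is correct and is essentially the paper's own proof in a slightly rearranged form: both combine Claims~2--4 with the bound on $|E\mOmega|$ coming from condition \eqref{cond:Omega_complexity} to dominate everything by a linear function of $c_*$ and then invoke \eqref{cond:M_large} to beat it with $2m_{st}\ge 12\cdot 2^{c_*}$ (the paper redoes the $|E\mOmega|$ bookkeeping to get $21c_*-1$ where you import the packaged estimate $16(c_*-1)-8$ and get $29c_*-17$; both comparisons hold for all integers $c_*\ge 0$). The one loose end is that the hypothesis for that imported estimate is $\mOmega\neq\emptyset$ rather than $\cc(\Delta)\ge 1$, but when $\mOmega=\emptyset$ one substitutes $|E\mOmega|=0$ directly and the same exponential-versus-linear comparison goes through, so nothing breaks.
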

			Recall from condition \eqref{cond:Omega_complexity} that \[|E\mOmega| \leq 8(\chi(\Delta) - \chi(\bar \Delta) - \chi(\mOmega)). \]
			As in the proof of Lemma~\ref{lemma:O3}  \changed{and using $b_0(\Delta)=b_0(\Delta\backslash \Ec)$ and $b_1(\bar\Delta\backslash\Ec)=b_1(\bar\Delta)-|\Ec|$} we observe that
			\[8(\chi(\Delta) - \chi(\bar \Delta) - \chi(\mOmega)) \leq 8(\cc(\Delta) + \betti(\bar \Delta) - |\Ec| + \betti(\mOmega) - 1). \]
			Combining this with Claims \ref{claim:bound_collapsed_edges} and \ref{claim:bound_visits} we have
			\begin{align*} C & \leq\changed{10N+|E\mOmega| = 13N - 3N} + |E\mOmega| \\
				&\leq 13\left(\betti(\Theta_c) + \cc(\Delta) - \frac{|V_{3+}|}{2}\right) - 3\changed{N} + 8(\cc(\Delta) + \betti(\bar \Delta) - |\Ec| + \betti(\mOmega) - 1) \\
				&\leq 21 \cc(\Delta) + \underbrace{13\betti(\Theta_c) + 8 \betti(\mOmega)}_{B_1} + \changed{\underbrace{(8\betti(\bar \Delta) -|\Ec|}_{B_2})} - \frac{13}{2}|V_{3+}| - 3\changed{N} - 8
			\end{align*}
			We claim that $B_1+\changed{B_2} \leq 21(\betti(\Theta) - |\Ec|)$.
			Indeed by construction of $\Theta_c$ we have $\betti(\Theta_c) = \betti(\Theta) - \betti(\bar \Delta)$. Moreover since $\mOmega$ is a subgraph of $\bar \Delta \backslash \Ec$ we have
			\[\betti(\mOmega) \leq \betti(\bar \Delta \backslash \Ec) = \betti(\bar \Delta) - |\Ec|.\]
			Thus \[B_1  = 13\betti(\Theta_c) + 8 \betti(\mOmega) \leq 13 (\betti(\Theta_c) + \betti(\mOmega)) \leq 13 (\betti(\Theta) - |\Ec|).\] 
			Moreover $\betti(\bar\Delta) \leq \betti(\Theta)$ so $\changed{B_2} \leq 8(\betti(\Theta) - |\Ec|)$.
			Combining the inequalities for $B_1$ and $\changed{B_2}$ we get $B_1+\changed{B_2} \leq 21(\betti(\Theta) - |\Ec|)$ as claimed.
			
			Using our previous estimate for $C$ and replacing $B_1 + \changed{B_2}$ we have
			\begin{align*} C &\leq 21 \cc(\Delta) + B_1 + \changed{B_2} - \frac{13}{2}|V_{3+}| - 3\changed{N} - 8\\
											&\leq 21 \underbrace{(\cc(\Delta) + \betti(\Theta) - |\Ec|)}_{=c_*} - \frac{13}{2}|V_{3+}| - 3\changed{N} - 8.
			\end{align*}
			
			Now recall that $\gamma$ is of length $2m_{st} - 3$ and that $l(\gamma_c) = l(\gamma) - C$ so we have
			\[ l(\gamma_c) \geq 2m_{st} - 3 - 21 c_* + \frac{13}{2}|V_{3+}| + 3\changed{N} + 8\] 
			By condition \eqref{cond:M_large} we have \[ 2m_{st} \geq 2.\Konst.2^{c_*} \geq 21 c_* - 1\] 
			and hence
			\[ l(\gamma_c) \geq 4 + \frac{13}{2}|V_{3+}| + 3\changed{N}\]
			which implies Claim \ref{bound:length_gamma_c}.
			
			
As we had reduced the claim of the Lemma to Claim~\ref{bound:length_gamma_c} this concludes the proof.		\end{proof}

		\begin{proposition}\label{proposition:Gammaf_is_R}
			Let $(\Dc,\pairOmega)$ be a tame marked decomposition of minimal complexity. Then \changedm{$\Theta = \Theta_S$}.
		\end{proposition}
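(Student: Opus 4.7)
The plan is to argue by contradiction: assume $\Theta \neq \Theta_S$ and exhibit a tame marked decomposition of strictly smaller complexity. Since $\Gamma$ has no loop edges by Lemma~\ref{lemma:no_loop_in_Gamma} and every loop edge of $\Delta$ at the basepoint maps to a loop edge of $\Theta$, the assumption $\Theta \neq \Theta_S$ means there is some $u \in S$ that is not the label of a loop edge at the basepoint $v_0 \in \Theta$. Using the observation~\ref{obs:vertex_lifts_in_Gamma} we may assume $v_0$ lifts to a vertex of $\Gamma$. Since the map $\pi_1(\Theta, v_0) \to W(M)$ is surjective by condition~\eqref{cond:pi1surj}, there is a reduced closed path $p_u$ at $v_0$ with $\ell(p_u) =_{W(M)} u$; choose $p_u$ of minimal length. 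Because $\Theta$ is folded by Proposition~\ref{prop:main_tech}, $\ell(p_u)$ is a reduced word in $S$ by Lemma~\ref{lemma:folded_graph}~(2), and since $u$ is not represented by a loop edge at $v_0$, we have $l(p_u) \ge 2$.

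Next I would appeal to Lemma~\ref{lemma:almost_whole_rel} (the ``almost-whole-relator'' lemma cited in Section~\ref{sec:outline}) applied to the reduced word $\ell(p_u)$ representing a generator. This provides a subpath $\gamma$ of $p_u$ whose label is almost a whole relator, i.e.\ of the form $s(ts)^{m_{st}-2}$ for some $s\neq t$. By Lemma~\ref{lemma:gamma_not_embedded}, the path $\gamma$ is \emph{not} embedded in $\Theta$. The task is then to classify the possible shapes of the image of $\gamma$ in $\Theta$, using that $\Theta$ is folded, that $\Delta$ is folded (Lemma~\ref{lemma:Delta_folded}), and that $\bar\Delta$ satisfies condition~\eqref{cond:no_loop_on_special_path}. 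These constraints imply that the image of $\gamma$ is one of the three shapes depicted in Figure~\ref{fig:shapes}: either a single loop edge traversed together with a long embedded arc, a ``theta''-shape with two arcs of alternating type sharing endpoints, or a closed alternating loop.

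For each of these shapes I would argue as in the outline of Section~\ref{sec:outline}. In the loop-plus-arc case the long embedded arc $\delta$ has alternating label $(st)^{m_{st}/2}$ (or $(ts)^{m_{st}/2}$), the single loop edge at one endpoint has the correct label from $\{s,t\}$, and Lemma~\ref{lemma:alt_path_in_Omega}~\eqref{lemma:alt_path_in_O3_LE} guarantees that the inner portion of $\delta$ lifts to $\Gamma$. This lift allows us to ``translate'' the loop edge along $\delta$, introducing a new loop edge at the other endpoint of $\delta$ and promoting $\delta$ to a special path of a new $\Delta'$; the increase of $\betti(\Theta)$ by $1$ is compensated by an increase in $|\Sppaths|$ by $1$, so $c_1$ is preserved, while $\chi(\Delta)$ decreases by $1$, so $c_2$ strictly decreases. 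The other two shapes are handled analogously: the alternating-type structure together with Lemma~\ref{lemma:alt_path_in_Omega} allows the relevant arcs to be lifted to $\Gamma$ and then surgered, producing an additional special path (and an additional loop edge) so that the primary complexity strictly decreases. In all three cases condition \eqref{cond:M_large} is preserved since $c_*$ does not increase, and the remaining tameness conditions are preserved by Lemma~\ref{lemma:Omega_path}.

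The hard part will be the bookkeeping in the loop-plus-arc case: one must verify that the new special path $\delta$ satisfies the special-graph conditions \eqref{cond:Delta_length_sppath}--\eqref{cond:Delta_relator} in $\Delta'$ (in particular that $\delta$ is simple or simple closed and does not clash with existing special paths of type $\{s,t\}$), that the marking can be chosen on the enlarged special graph so that conditions \eqref{cond:Omega_vertex}--\eqref{cond:Omega_path} still hold, and that after all identifications $\Theta'$ remains well-defined with $\pi_1(\Theta')\to W(M)$ surjective. This requires carefully invoking the structural lemmas established earlier --- Lemmas~\ref{lemma:Delta_folded}, \ref{lemma:delta_12}, \ref{lemma:EF_EOmega}, \ref{lemma:IIIA}, and the folded-ness of $\Theta$ --- to exclude unwanted coincidences among the edges produced by the surgery.
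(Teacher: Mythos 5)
Your skeleton (contradiction, shortest loop $p_u$, reducedness of $\ell(p_u)$ via foldedness of $\Theta$, Lemma~\ref{lemma:almost_whole_rel}, Lemma~\ref{lemma:gamma_not_embedded}, then a case analysis on the image of the subpath) is exactly the paper's, but the case analysis and the complexity bookkeeping contain genuine errors. First, the classification: since $\Theta$ is folded, every vertex meets at most one $s$-edge and one $t$-edge, so the image of an alternating $\{s,t\}$-path can never be a ``theta''-shape; the three actual possibilities are an embedded arc with a loop edge at one end, an embedded arc with loop edges at both ends, and a simple closed loop of even length. Second, in the loop-plus-arc case your arithmetic is off: if $\betti(\Theta)$ increases by $1$ and $\chi(\Delta)$ decreases by $1$, then $c_2=\betti(\Theta)+\chi(\Delta)$ is \emph{unchanged}, not decreased. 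In the paper's construction one adds a \emph{new} disjoint path $\delta$ and loop edge $f$ to $\Delta$ (attached only at the basepoint of $\tilde e$), so that $c_1$ and $c_2$ are preserved and the strict decrease occurs in $c_3=|E(\Theta^f\backslash\Delta^f)|$, because the existing arc $\gamma''$ folds onto $\bar\delta\cup\bar f$ --- and this only works when $\gamma''\not\subset\bar\Delta$. The remaining sub-case $\gamma''\subset\bar\Delta$ cannot be surgered at all and must instead be excluded by a counting argument: Lemma~\ref{lemma:alt_path_in_Omega}~\eqref{lemma:alt_path_in_O3_in_O} forces all but $10$ edges of $\gamma''$ into $\mOmega$, so $|E\mOmega|\geq m_{st}-12\geq \Konst.2^{c_*}-12$, contradicting the bound $|E\mOmega|\leq 16(c_*-1)-8$ from condition~\eqref{cond:Omega_complexity}. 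You do not address this sub-case.

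Third, your appeal to Lemma~\ref{lemma:alt_path_in_Omega}~\eqref{lemma:alt_path_in_O3_LE} in the one-loop case is illegitimate: that statement requires loop edges at \emph{both} extremities of the alternating path. Finally, the remaining two shapes are \emph{not} handled ``analogously'' by adding special paths. An arc joining two loop edges has length at most $2m_{st}-5$ and its label need not complete to a relator of the required form, so it cannot be promoted to a special path satisfying condition~\eqref{cond:Delta_relator}. The paper instead lifts the arc (resp.\ the closed loop) to $\Gamma$ --- using part~\eqref{lemma:alt_path_in_O3_LE} of Lemma~\ref{lemma:alt_path_in_Omega} where it legitimately applies, or first modifying the decomposition so the arc lies outside $\bar\Delta$ --- and then collapses it (identifying the two loop-edge basepoints in $\Delta$, resp.\ replacing the even closed loop by a loop of length $2$), which preserves $c_1,c_2,c_3$ and strictly decreases $c_4$; the degenerate short sub-cases are excluded by directly shortening $\gamma$, contradicting the minimality of $p_u$. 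As written, your proposal would not close any of the three cases.
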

		\begin{proof}\setcounter{claim}{0}
			\changedm{As $\Theta$ is folded by Proposition \ref{prop:main_tech} this} is equivalent to saying that for any $u \in S$ there is a loop edge based at $v_0$ in $\Theta$ labeled by $u$. We proceed by contradiction. Suppose there is an $u \in S$ such that there is no loop edge based at $v_0$ with label $u$. Choose a shortest loop $\gamma$ based at $v_0$ in $\Theta$ among those such that $\ell(\gamma) =_W u$. Such a loop exists by condition \eqref{cond:pi1surj}. It is clear that $\gamma$ must be reduced, otherwise it would not be of shortest length. Since $\Theta$ is folded, the word $w:=\ell(\gamma)$ is also reduced. By Lemma \ref{lemma:almost_whole_rel} $w$ contains a subword of the form $s(ts)^{m_{st}-2}$ and hence $\gamma$ has a subpath $\gamma'$ such that $\ell(\gamma') = s(ts)^{m_{st}-2}$.
			
			By Lemma \ref{lemma:gamma_not_embedded} the path $\gamma'$ is not embedded. \changed{ Let $v$ be a vertex in the image of $\gamma'$. As $\Theta$ is folded it follows that there is at most one edge adjacent to $v$ that has label $s$ and at most one edge adjacent to $v$ with label $t$. These edges may or may not be loop edges.} It follows that the image of $\gamma'$ in $\Theta$ is either an embedded path of length at least $m_{st}-2$ with a loop edge at one end, or an simple closed path of even length, or an embedded path of length at most $2m_{st}-5$ with loop edges at both ends.
			
			Claims~\ref{claim:add_translate}, \ref{claim:figure8} and \ref{claim:collapse_loop} assert that none of these situations can occur. Hence Proposition~\ref{proposition:Gammaf_is_R} immediately follows from these claims.
			
			\begin{claim} The path $\gamma'$ does not meet the image of a special path of type $\{s,t\}$.
			\end{claim}
			Indeed, suppose $\gamma'$ meets a special path of type $\{s,t\}$. Since $\Theta$ is folded the image of $\gamma'$ is contained in the union of the special path and \changed{two loop edges based at $\alpha(\delta)$ and $\omega(\delta)$}. By condition \eqref{cond:Delta_relator} $\gamma'$ has the same endpoints as a path $\gamma''$ reading the word $tst$. Moreover, replacing $\gamma'$ with $\gamma''$ in the path $\gamma$ does not change the \changed{element} it represents in $W$. As $m_{st}$ is at least $6$ the length of $\gamma'$ is at least $9$. This contradicts the fact that $\gamma$ is the shortest path in $\Theta$ reading a word that represents $u \in W$.
			
			\begin{claim} \label{claim:add_translate} The image of $\gamma'$ in $\Theta$ is not an embedded path $\gamma''$ of length at least $m_{st}-2$ with a loop edge $e$ based at the end of $\gamma''$.
			\end{claim}
			Assume by contradiction that the image of $\gamma'$ in $\Theta$ is as above. We show that the complexity of $(\Dc,\pairOmega)$ is not minimal.
			
			By Lemma \ref{lemma:no_loop_in_Gamma} the loop edge $e$ lies in $\bar \Delta$. Moreover it cannot be the image of a non-loop edge of $\Delta$ by condition \eqref{cond:two_vert_same_type}. So let $\tilde e$ be \changed{the unique} lift of $e$ in $\Delta$. Up to exchanging $s$ and $t$ we can assume $s = \ell(e)$. We construct a new decomposition $(\Dc',\pairOmega')$ as follows: (see Figure \ref{fig:adding_a_special_path}) let $\Delta' = (\Delta \cup \delta \cup f) / \sim$, where 
			\begin{itemize}
				\item $\delta$ is a path such that $\ell(\delta) = (ts)^{\frac{m_{st} - 1}{2}}$ if $m_{st}$ is odd or $\ell(\delta) = t(st)^{\frac{m_{st} - 2}{2}}$ is $m_{st}$ is even;
				\item $f$ is a loop edge with label $s$ if $m_{st}$ is even and $t$ if $m_{st}$ is odd;
				\item the identification $\sim$ is given by \changed{$\alpha(\delta) \sim \alpha(\tilde e)$ and $\omega(\delta)\sim \alpha(f)$.}
			\end{itemize}
			Put $\Gamma = \Gamma$, $F' = F$, $p' = p$ and $\mOmega'$ is the image of $\mOmega$ under the obvious map $\Theta \to \Theta'$. Declare $\delta$ to be a special path of $\Delta'$, so that $\Delta'$ is an $M$-special graph. Tameness of the marked decomposition $(\Dc',\pairOmega')$ follows immediately from that of $(\Dc,\pairOmega)$.
			
			$\Delta'$ has one more special path and one more loop edge than $\Delta$ so $c_1' = c_1$ and $c_2' = c_2$. Observe also that the image of $\gamma''$ in $\Theta'$ folds on $\bar \delta \cup \bar f$, so that $c_3' = |E(\Theta'^f \backslash \Delta'^f)| \leq |E(\Theta^f \backslash \Delta^f)| - |E(\gamma'' \backslash \bar \Delta)|$. Thus if $\gamma''$ is not entirely contained in $\bar \Delta$ then $c_3' < c_3$, a contradiction to the minimality of the complexity of $(\Dc,\pairOmega)$. Suppose from now on that $\gamma'' \subset \bar \Delta$. \changedm{If $\mOmega$ is empty then the map $\Delta \to \bar \Delta$ is an embedding and $\gamma''$ lifts to a path in $\Delta$. This is impossible as $\gamma''$ is of length at least $3$ and does not meet the image of a special path of the type $\{s,t\}$. Thus we can assume $\mOmega$ is nonempty.} Recall from the proof of Lemma~\ref{lemma:O3} that since $\mOmega$ is nonempty we have
			\[ |E\mOmega| \leq 16(c_* - 1) - 8.\]
			\changedm{In particular $c_* \geq 2$.} Also recall that since $\gamma''$ does not meet the image of a special path of the type $\{s,t\}$, all but the 10 edges of $\gamma''$ lie in $\mOmega$ by Lemma \ref{lemma:alt_path_in_Omega} \eqref{lemma:alt_path_in_O3_in_O}.
			On the other hand, by condition \eqref{cond:M_large} $m_{st} \geq \Konst.2^{c_*}$ so that
			\[ |E\mOmega| \geq |E(\mOmega \cap \gamma'')| \geq l(\gamma'') - 10 \geq m_{st} - 12 \geq \Konst.2^{c_*} - 12.\]
			These inequalities are incompatible for any $c_* > 1$. 
			\begin{figure}
				\center \input{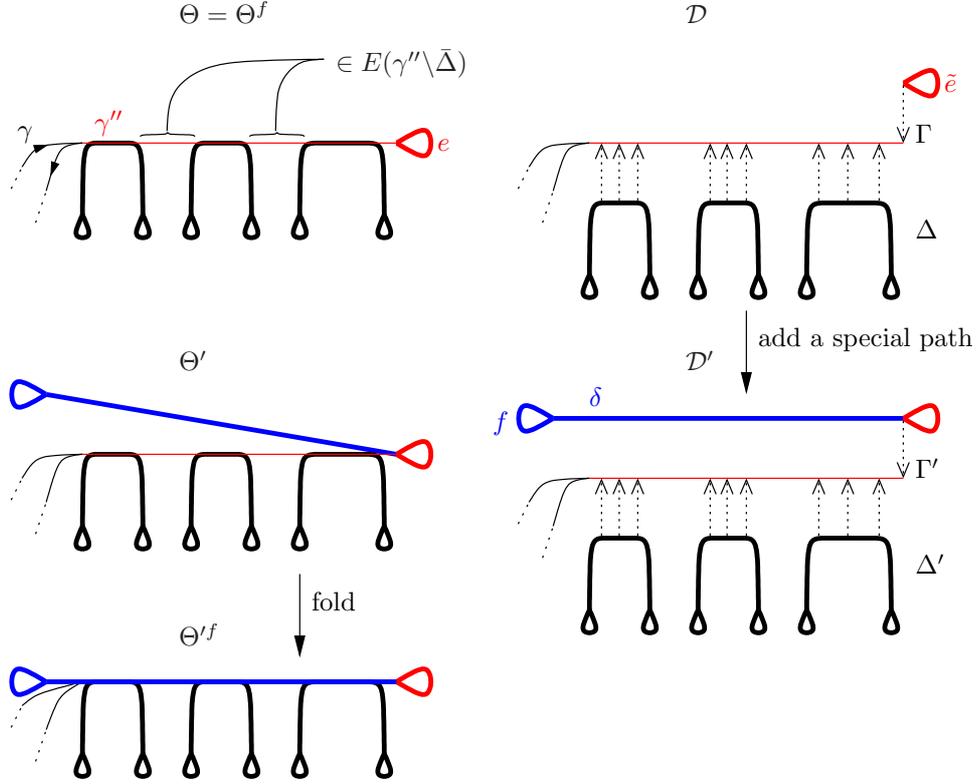}
				\caption{Adding a special path to $\Delta$.}
				\label{fig:adding_a_special_path}
			\end{figure}
			
			\begin{claim} \label{claim:figure8} The image of $\gamma'$ in $\Theta$ is not an embedded path $\gamma''$ of length at most $2m_{st}-5$ with loop edges $e$ and $e'$ at both ends.
			\end{claim}
			We argue by contradiction and assume that the image of $\gamma'$ in $\Theta$ is an embedded path $\gamma''$ of length at most $2m_{st}-5$ with loop edges $e$ and $e'$ at both ends.
			
			Suppose first that $l(\gamma'')=0$. Since $\Theta$ is folded $e, e'$ have different labels $\{s,t\}$. Therefore there is a path $\eta'$ of length 3 with the same endpoints as $\gamma'$ reading $tst$. As in Claim \ref{claim:collapse_loop} write $\gamma = \gamma_1 \cdot \gamma' \cdot \gamma_2$ and let $\eta = \gamma_1 \cdot \eta' \cdot \gamma_2$. Now $l(\eta) < l(\gamma)$ and $\ell(\eta) =_W \ell (\gamma) =_W u$. This is impossible as $\gamma$ is the shortest closed loop in $\Theta$ based at $v_0$ with labeling representing $u \in W$.
			
			From now on we assume $l(\gamma'')>0$. Let $x$ and $y$ be the basepoints of $e$ and $e'$ respectively. By Lemma \ref{lemma:no_loop_in_Gamma} the loop edges $e$ and $e'$ lift to loop edges $\tilde e$ and $\tilde e'$ in $\Delta$. Let also $\tilde x$ and $\tilde y$ be the basepoints of $\tilde e$ and $\tilde e'$ respectively. Our goal is to collapse $\gamma''$ and identify $\tilde x$ and $\tilde y$ in order to decrease complexity. In order to do that we need to lift $\gamma''$ to a path \changed{$\hat \gamma''$} in $\Gamma$. \changed{We will now show that we can assume that this is possible, possibly after replacing $(\Dc,\Omega)$ with another tame marked decomposition of the same primary complexity. We distinguish two cases.}
			
			\setcounter{case}{0}
			\begin{case} Suppose \changed{first that} $\gamma'' \subset \bar \Delta$.  Then Lemma \ref{lemma:alt_path_in_Omega} \eqref{lemma:alt_path_in_O3_LE} applies to the path $e\gamma''e'$ and we conclude that $\gamma''$ is entirely contained in $\bar F$. Therefore $\gamma''$ lifts to a path \changed{$\hat\gamma''$} in $\Gamma$.
			\end{case}
			
			\begin{case} \label{case:unfold_path} Suppose \changed{now} that there is an edge $f$ of $\gamma''$ that is not in $\bar \Delta$. We construct a new decomposition $(\Dc^1,\pairOmega^1)$ so that $\gamma''^1$ lies outside $\bar \Delta^1$ (except at its endpoints) as follows. Since $\gamma''$ is embedded, it can be rewritten as $\eta \cdot \changed{f} \cdot \eta'$ where $\eta$ and $\eta'$ are in $\Theta \backslash \{\changed{f}\}$. 
			
			By Observation \ref{obs:vertex_lifts_in_Gamma} we can assume that $\alpha(\gamma'')=x$ and \changed{$\omega(\gamma'')=y$} lift to vertices \changed{$\hat x$} and \changed{$\hat y$} in $\Gamma$.
			
			Put $\Gamma^1 := (\gamma''^1 \cup \Gamma \backslash \{e\}) / \sim$ where $\gamma''^1$ is a new path such that $\ell(\gamma''^1)=\ell(\gamma'')$ and the identification $\sim$ is given by $\alpha(\gamma''^1) \sim \changed{\hat x}$ and $\omega(\gamma''^1) \sim \changed{\hat y}$. Also put $\Delta^1 = \Delta$, $F^1 = F$, $p^1 = p$ and let $\mOmega^1$ be the obvious  image of $\mOmega$ in $\bar \Delta^1$. 
			Note that $\betti(\Theta)$ is the same as $\betti(\Theta')$ and that $\Theta'$ folds onto $\Theta$ so that the primary complexities remain the same, and that condition \eqref{cond:pi1surj} holds for $(\Dc^1,\pairOmega^1)$. Condition \eqref{cond:Omega}, \eqref{cond:inj_edges_sppaths} and \eqref{cond:M_large} clearly hold for $(\Dc^1,\pairOmega^1)$ as they hold for $(\Dc,\pairOmega)$. Therefore $(\Dc^1,\pairOmega^1)$ is a tame marked decomposition with the same primary complexity as $(\Dc,\pairOmega)$. \changed{Moreover} $\gamma''^1$ lifts to a path \changed{$\hat \gamma''^1$} in $\Gamma^1$, as desired. From now on we rename $(\Dc^1,\pairOmega^1)$ by $(\Dc,\pairOmega)$ \changed{and similarly drop all other superscipts equal to $1$}.
			\end{case}
			
			Clearly $\tilde x$ and $\tilde y$ are in $F$. By Lemma \ref{lemma:O3} \eqref{lemma:O3_small} $\tilde x$ and $\tilde y$ lie in different connected components of $F$. Let $(\Dc^u,\emptyset)$ be the tame marked decomposition obtained from $(\Dc,\pairOmega)$ by performing unfoldings of type \ref{unfold:cc_vertex} to each connected component of $F$, choosing $\tilde x$ and $\tilde y$ as distinguished vertices in their respective components, and for each vertex of the resulting forest except $\tilde x$ and $\tilde y$ perform unfolding of type \ref{unfold:artificial_segment}. Let $\tilde \gamma''^u $ denote the image of $\tilde \gamma''$ in $\Gamma^u$. Note that due to unfolding of type \ref{unfold:artificial_segment} no vertex $F^u$ is mapped by $p$ to $\tilde \gamma''^u$ except $\tilde x$ and $\tilde y$. Recall from Lemma \ref{lemma:unfold_same_cpx} that the primary complexity of $(\Dc^u,\emptyset)$ is equal to that of $(\Dc,\pairOmega)$.
			
			Now we construct a new decomposition $\Dc'$ as follows. Write $\tilde \gamma''^u = e_0,e_1, \ldots ,e_n$ and let $\Gamma'$ be obtained from $\Gamma^u$ by collapsing $\tilde \gamma''$ to a vertex $v$. Let $\Delta^1:=\Delta^u / \tilde x \sim \tilde y$. If $\ell(\tilde e) \neq \ell(\tilde e')$ let $\Delta':=\Delta_1$ and if $\ell(\tilde e) = \ell(\tilde e')$ let $\Delta'$ be obtained from $\Delta_1$ by relabeling $\ell'(\tilde e) = s$ and $\ell'(\tilde e') = t$. Let \changed{$\tilde x'$} denote the image of $\tilde x$ in $\Delta'$. Moreover, put $F' := F^u/ \tilde x \sim \tilde y$ and define $p' : F' \to \Gamma'$ by $p'(\changed{\tilde x'}) := v$ and $p'|_{F' \backslash \{\changed{\tilde x'}\}} := p^u \circ \phi$ where $\phi : \Gamma^u \to \Gamma'$ is the obvious quotient map (note that $\phi$ collapses edges, and so is not a graph map in the usual sense of this paper). We show that the marked decomposition $(\Dc',\emptyset)$ is tame. Indeed, conditions \eqref{cond:inj_edges_sppaths} and \eqref{cond:M_large} follow immediately from the tameness of $(\Dc^u, \emptyset)$. Moreover, $p'$ is injective since $p^u$ maps $F^u \backslash \{\tilde x, \tilde y\}$ injectively to $\Gamma^u \backslash \tilde \gamma''^u$. Therefore condition \eqref{cond:Omega} holds for $(\Dc',\emptyset)$. Lastly, letting $z$ be the image of \changed{$\tilde x'$} in $\Theta$ by construction of $\Dc'$ there are loop edges $e, e' \in \Theta'$ based at $z$ with labels $s,t$ respectively. Hence there is an obvious label-preserving map $\theta : \Theta^u \to \Theta'$, and in particular condition \eqref{cond:pi1surj} holds for $(\Dc',\emptyset)$.
			
			Note that $\betti(\Gamma') = \betti(\Gamma^u)$, that $\cc(F') = \cc(F^u) - 1$ and that $\chi(\Delta') = \chi(\Delta^u) -1$ so that $\betti(\Theta') = \betti(\Theta^u)$,  $c_1' = c_1$ and $c_2' = c_2$. \changed{Recall that $\Theta=\Theta^f$ by Proposition~\ref{prop:main_tech}.} Moreover $\theta:\Theta^u \to \Theta'$ induces a \changed{surjective} map $\Theta =\Theta^f= (\Theta^u)^f \to \Theta'^f$ \changedm{that maps $\bar \Delta = \Delta^f$ onto $\Delta'^f$ and that is not injective on edges. Therefore \[ c_3' = |E(\Theta'^f \backslash \Delta'^f)| \leq |E(\Theta^f \backslash \Delta^f)| = c_3\]
			and \[c_4' = |E\Theta'^f| < |E\Theta|= |E\Theta^f|= c_4.\]}  Hence $(\Dc',\emptyset)$ is a tame marked decomposition of smaller primary complexity than $(\Dc,\pairOmega)$, a contradiction.

			\begin{claim}\label{claim:collapse_loop} The image of $\gamma'$ in $\Theta$ is not a simple closed loop $\gamma''$ of even length.
			\end{claim}
			Suppose by contradiction that the image of $\gamma'$ in $\Theta$ is a simple closed loop $\gamma''$ of even length.
			
			First we show that $\gamma''$ is not of length $2$. Suppose it is the case. Since $\gamma'$ is of odd length, its endpoints are the two vertices of $\gamma''$. But since $\gamma''$ is of length 2 there is also a path $\eta'$ of length 3 with the same endpoints as $\gamma'$ reading $tst$. Write $\gamma = \gamma_1 \cdot \gamma' \cdot \gamma_2$ and let $\eta = \gamma_1 \cdot \eta' \cdot \gamma_2$. Now $l(\eta) < l(\gamma)$ and $\ell(\eta) =_W \ell (\gamma) =_W u$. This is impossible as $\gamma$ is the shortest closed loop in $\Theta$ based at $v_0$ with labeling representing $u \in W$.
			
			Thus $\gamma''$ is of length at least $4$. Our goal is to replace $\gamma''$ by a loop of length $2$, which will contradict the minimality of complexity. \changed{We first show that we can} lift $\gamma''$ to a closed loop \changed{$\hat \gamma''$ in $\Gamma$, possibly after replacing $(\Dc,\mOmega)$ with another tame marked decomposition of the same primary complexity}.			
			
			\setcounter{case}{0}
			\begin{case} Suppose first that $\gamma'' \subset \bar \Delta$. Then Lemma \ref{lemma:alt_path_in_Omega} applies and $\gamma''$ is entirely contained in $\mOmega$. Indeed, each edge of $\gamma''$ is not a loop edge and can be considered as an edge which is neither in the five first or the five last edges of the path $\gamma'' \cdot \gamma'' \cdot \gamma''\cdot\gamma''$. By Lemma \ref{lemma:EF_EOmega} all edges of $\mOmega$ are in $\bar F$ so that $\gamma''$ lifts to a closed path \changed{$\hat \gamma''$} in $\Gamma$.
			\end{case}
			
			\begin{case} Suppose now that there is an edge $f$ of $\gamma''$ that is not in $\bar \Delta$. Proceeding in exactly the same way as in case \ref{case:unfold_path} of claim \ref{claim:figure8} we construct a tame marked decomposition $(\Dc^1,\pairOmega^1)$ such that $\gamma''^1$ lies outside of $\bar \Delta^1$ (except possibly at its basepoint) without changing the primary complexity. \changed{In particular $\gamma''^1$ has a lift $\hat\gamma''^1$ in $\Gamma^u$.} From now on we rename $(\Dc^1,\pairOmega^1)$ by $(\Dc,\pairOmega)$ \changed{and similarly drop all other superscipts equal to $1$}.
			\end{case}
			
			Let $(\Dc^u,\emptyset)$ be the tame marked decomposition obtained from $(\Dc,\pairOmega)$ by performing unfoldings of type \ref{unfold:cc_vertex} to each connected component of $F$, and for each vertex of the resulting forest perform unfolding of type \ref{unfold:artificial_segment}. Let \changed{$\hat \gamma''^u$} denote the image of \changed{$\hat \gamma''$} in $\Gamma^u$. Note that due to unfolding of type \ref{unfold:artificial_segment} no vertex of \changed{$\hat \gamma''^u$} is in the image of $p^u$. Recall from Lemma \ref{lemma:unfold_same_cpx} that the primary complexity of $(\Dc^u,\emptyset)$ is equal to that of $(\Dc,\pairOmega)$.
			
			Now we construct a new decomposition $\Dc'$ as follows. Write $\changed{\hat \gamma''^u}= e_0, e_1, \ldots, e_n$ and let $\Gamma' = \Gamma^u / \sim$ where the identification is the equivalence relation generated by $e_i \sim e_{i+2}$ for $0 \leq i \leq n-2$. Put $\Delta' := \Delta^u$, $F' := F^u$ and $p' := p^u \circ \phi : F' \to \Gamma'$ where $\phi : \Gamma^u \to \Gamma'$ is the obvious map. Thus $\changed{\hat \gamma''^u}$ is mapped to a loop of length two in $\Gamma'$. We show that the marked decomposition $(\Dc',\emptyset)$ is tame. Indeed, conditions \eqref{cond:inj_edges_sppaths} and \eqref{cond:M_large} follow immediately from the tameness of $(\Dc^u, \emptyset)$. Moreover, $p'$ is injective since $p^u$ is and since no vertex of \changed{$\hat \gamma''^u$} is in the image of $p^u$. Therefore condition \eqref{cond:Omega} holds for $(\Dc',\emptyset)$. Lastly, the obvious maps $\Delta^u \to \Delta'$ and $\Gamma^u \to \Gamma'$ induce a label-preserving map $\theta:\Theta^u \to \Theta'$ so condition \eqref{cond:pi1surj} holds for $(\Dc',\emptyset)$.
			
			Note that $\betti(\Gamma') = \betti(\Gamma^u)$, that $\cc(F^u) = \cc(F')$ and that $\Delta' = \Delta^u$ so that $\betti(\Theta') = \betti(\Theta^u)$,  $c_1' = c_1$ and $c_2' = c_2$. Moreover $\theta: \Theta^u \to \Theta'$ induces a map $\Theta = (\Theta^u)^f \to \Theta'^f$ \changedm{that maps $\bar \Delta = \Delta^f$ onto $\Delta'^f$ and that is not injective on edges as $\theta$ maps $\gamma''$ to a path of length $2$. Therefore \[c_3' \leq |E(\Theta'^f \backslash \Delta'^f)| \leq |E(\Theta^f \backslash \Delta^f)| = c_3\] and \[c_4' = |E\Theta'^f| < |E\Theta^f| = c_4.\]} Hence $(\Dc',\emptyset)$ is a tame marked decomposition of smaller primary complexity than $(\Dc,\pairOmega)$, a contradiction.
		\end{proof}
		
		\changedm{We are finally ready to prove Theorem~\ref{thm:reformulation}.
		\begin{proof}[Proof of Theorem~\ref{thm:reformulation}] It clearly suffices to verify the claim for a tame marked decomposition $(\Dc,\pairOmega)$ of minimial complexity, which exists by Lemma~\ref{lemma:exist_minimalcpx_decomp}. By Proposition~\ref{proposition:Gammaf_is_R} we have $\Theta = \Theta_S$. We claim that $\Sppaths = \emptyset$. Assume by contradiction that there is a special path $\delta$ in the special graph $\Delta$. Then $\delta$ contains $2$ edges with the same label. But $\Theta_S$ has only one edge with a given label, contradicting condition~\eqref{cond:inj_edges_sppaths}. Thus $\Sppaths$ is empty, and $c_1 = \betti(\Theta_S) - |\Sppaths| = n - 0$.
		\end{proof}}

\appendix

	\section{Words in Coxeter groups}
		Throughout this section we fix a Coxeter matrix $M = (m_{st})_{s,t\in S}$ and a Coxeter group 
		\[W(M) = \langle S \mid (st)^{m_{st}}, s,t \in S\rangle.\]
		Our goal is to show that any reduced word representing $s\in S$ in $W$ must contain almost a whole relation as a subword.
		For $s,t \in S$ and $k \in \Nb$ we define the alternating word 
		\[\gamma_{st}(k) = 
			\begin{cases} 
				(st)^{\frac{k}{2}} &\text{ if $k$ is even} \\
				s(ts)^{\frac{k-1}{2}} &\text{ if $k$ is odd.} 
			\end{cases} \]
		
			
			We briefly recall Tits' solution of the word problem for Coxeter groups. Let $w$ be a word in the alphabet $S$. We say that $w'$ is obtained from $w$ by a \emph{cancellation} if $w = xssy$ and $w' = xy$ for two words $x,y$ and for some $s \in S$. We say that $w'$ is obtained from $w$ by a \emph{homotopy} if $w = x\gamma_{st}(m_{st})y$ and $w'=x\gamma_{ts}(m_{st})y$ for two words $x,y$ and for $s,t \in S$.
			
			Tits proved the following result \cite{Tits}:
			\begin{theorem}Let $w$ be a word representing $1$ in $W$. Then there is a sequence $w=w_0, w_1, ..., w_p = 1$ of words such that $w_{i+1}$ is obtained from $w_i$ by either a \emph{cancellation} or by a \emph{homotopy}. 
			\end{theorem}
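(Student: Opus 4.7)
The plan is to prove Tits' theorem by induction on the length of $w$, combining the Exchange/Deletion conditions with Matsumoto's theorem on reduced expressions. The three ingredients I would set up in order are: (i) a notion of length coming from the geometric representation, (ii) the Exchange Condition, (iii) Matsumoto's theorem, after which Tits' theorem falls out.

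First I would construct the standard faithful linear representation of $W$ on $\mathbb{R}^S$ where each $s \in S$ acts as a reflection fixing the hyperplane orthogonal to a simple root $\alpha_s$, with the Gram matrix $-\cos(\pi/m_{st})$. Faithfulness of this representation, together with the resulting combinatorics of the root system, gives a well-defined length function $\ell: W \to \mathbb{N}$, where $\ell(g)$ equals the number of positive roots sent to negative roots by $g^{-1}$. This already solves the word problem in principle, but the point is to use it to extract the combinatorial structure.

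Next I would prove the Exchange Condition: if $s_1 \cdots s_k$ is a reduced expression for $g$ and $\ell(sg) < \ell(g)$, then there exists $i$ with $sg = s_1 \cdots \widehat{s_i} \cdots s_k$. This is proved by tracking which positive root is sent to a negative root. From this I would derive the Deletion Condition (if $w$ is not reduced, two letters may be removed without changing the element represented) by iterating the Exchange move. I would then prove Matsumoto's theorem: any two reduced expressions for the same $g \in W$ are related by a sequence of homotopies $\gamma_{st}(m_{st}) \leftrightarrow \gamma_{ts}(m_{st})$ (no cancellations needed). The proof is by induction on $\ell(g)$: given two reduced expressions starting with $s$ and $t$ respectively, repeated application of the Exchange Condition produces a third reduced expression beginning with $\gamma_{st}(m_{st})$, and the braid move turns this into one beginning with $\gamma_{ts}(m_{st})$; then the inductive hypothesis handles the shorter tails.

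Finally, Tits' theorem itself is deduced as follows. Argue by induction on $|w|$. If $w$ represents $1$ and is nonempty, it cannot be reduced (since the only reduced expression for $1$ is the empty word), so by the Deletion Condition there are positions $i < j$ whose letters $s_i, s_j$ may both be removed without changing the represented element. The key point is to realise this deletion as a cancellation preceded by homotopies: consider the subword $u = s_i s_{i+1} \cdots s_j$; by the deletion at these two positions, $u$ and $s_{i+1} \cdots s_{j-1}$ represent the same element, from which one extracts that both $s_i \cdot (s_{i+1} \cdots s_{j-1})$ and $(s_{i+1} \cdots s_{j-1}) \cdot s_i$ (with $s_j = s_i$) are reduced expressions for the same element. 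By Matsumoto, these are related by homotopies, and applying the corresponding homotopies inside $w$ brings the two occurrences of $s_i$ adjacent. A single cancellation then produces a shorter word $w'$ representing $1$, and the induction hypothesis finishes the argument.

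The main obstacle will be the bookkeeping in the last step, namely verifying that the homotopies provided by Matsumoto applied to the internal subword can always be performed inside the ambient word $w$ without disturbing the letters outside $[i,j]$. This amounts to checking that each homotopy replaces an occurrence of $\gamma_{st}(m_{st})$ by $\gamma_{ts}(m_{st})$ at a position fully contained in $[i,j]$, which is immediate from the definition but needs to be stated carefully. Once Matsumoto is in hand, no further new ingredient is required.
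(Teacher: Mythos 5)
The paper does not actually prove this statement: it is Tits' solution of the word problem, quoted directly from [T], so there is no in-paper argument to compare yours against. Your proposed route (geometric representation and length function, then the Exchange and Deletion Conditions, then Matsumoto's theorem, then Tits) is the standard modern derivation and is sound in outline.

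Two points in your final step need repair. First, the parenthetical ``with $s_j = s_i$'' is false: the two deleted letters need not be equal. In $\langle s,t \mid s^2, t^2, (st)^3\rangle$ one has $stst = ts$, and the only valid deletion removes the first $s$ and the last $t$. What the deletion actually gives is $s_i v = v s_j$ with $v = s_{i+1}\cdots s_{j-1}$; your homotopies then move a letter $s_j$ (not $s_i$) into position $j-1$, adjacent to the $s_j$ in position $j$, and it is $s_j s_j$ that cancels. This is harmless once corrected. Second, and more substantively, Matsumoto's theorem applies only to \emph{reduced} expressions, and a generic application of the Deletion Condition does not guarantee that $s_i\cdots s_{j-1}$ and $s_{i+1}\cdots s_j$ are reduced, so as written the invocation of Matsumoto is not justified. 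The fix is to choose $j$ minimal such that the prefix $s_1\cdots s_j$ is not reduced and to obtain $i$ from the Exchange Condition applied to the reduced word $s_1\cdots s_{j-1}$ and the letter $s_j$: then $s_i\cdots s_{j-1}$ is a subword of a reduced word, hence reduced, and $s_{i+1}\cdots s_j$ represents the same element with the same number of letters, hence is also reduced. With these two corrections your argument closes, and the bookkeeping issue you flag at the end (performing the braid moves inside positions $i$ through $j-1$ without touching the rest of $w$) is indeed immediate.
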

			
			Remark that any cancellation or homotopy does not increase the length of the word. Therefore starting from a word $w$, there are finitely many words $\{x_1,\ldots,x_n\}$ that can be obtained by iterating these two operations. In particular it is possible to decide algorithmically whether $w$ represents $1$ in $W$ or not by checking if the trivial word is in $\{x_1,\ldots,x_n\}$. \changed{This algorithm is called Tits' algorithm.}
			
	\begin{lemma}\label{lemma:almost_whole_rel}
		Suppose $m_{st} \geq 4$ for any $s\neq t \in S$. Let $w$ be a reduced word of length at least $2$ such that $w =_W u$ for some $u \in S$. Then $w$ contains a subword of the form $\gamma_{st}(2m_{st}-3)$ for some $s\neq t \in S$.
	\end{lemma}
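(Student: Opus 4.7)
The plan is to apply Tits' solution of the word problem stated above. Since $w =_W u$ and $u^2 = 1$, the word $wu$ represents $1 \in W$, so by Tits' theorem it can be reduced to the empty word by a sequence of cancellations and homotopies. Equivalently, applying Tits' algorithm to $w$ produces a reduction of $w$ to the unique shortest expression for $u$, namely $u$ itself. First I would check via the sign homomorphism $W \to \Zb/2$ sending each $s \in S$ to $1$ that $|w|$ is odd, so in particular $|w| \geq 3$, and a shortest Tits-reduction from $w$ to $u$ contains exactly $(|w|-1)/2$ cancellations interleaved with homotopies.

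The key sub-lemma to prove first is a local ``boundary analysis'': if a reduced word $v$ admits a homotopy $\gamma_{ab}(m_{ab}) \to \gamma_{ba}(m_{ab})$ which produces a non-reduced word, then the homotopy subword in $v$ must be preceded or followed by its continuation letter, so that $v$ contains an alternating subword of type $\{a,b\}$ of length $m_{ab}+1$. Focus on the very first cancellation in the chosen Tits-reduction of $w$: just before it, the current word $v_0$ is reduced (of length $|w|$) and obtained from $w$ by homotopies, and a homotopy of type $\{a,b\}$ is about to create a repeated letter. By the sub-lemma, $v_0$ contains an alternating subword of length $m_{ab}+1$.

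The main technical task is then to propagate this alternating subword from $v_0$ back to $w$, and to boost its length from $m_{ab}+1$ all the way to $2m_{ab}-3$. For the propagation, I would argue that a sequence of homotopies (of various types $\{c,d\}$) applied to $w$ either leaves an alternating subword of type $\{a,b\}$ intact (when $\{c,d\} \ne \{a,b\}$), or merely shifts it while preserving its total length and type (when $\{c,d\} = \{a,b\}$). For the length boost, I would iterate: after the first cancellation we obtain $v_1$ of length $|w|-2$ still representing $u$, so the argument can be applied again, and a careful bookkeeping shows that each iteration extends (or continues) the same alternating subword in $w$ rather than producing an unrelated one. Coupled with the observation that $\gamma_{st}(2m_{st})$ is exactly the relator, so no reduced word can contain an alternating subword of type $\{s,t\}$ of length $\geq 2m_{st}-1$, the alternating subword of $w$ must accumulate to length precisely in the range $\{2m_{st}-3, 2m_{st}-2\}$.

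The main obstacle is the propagation step when homotopies of the same type as the targeted alternating subword appear in the sequence: such homotopies can internally restructure the alternating subword, and one must verify that the ``longest alternating subword of each type'' is a quantity non-decreasing under homotopies applied to a reduced word (modulo the shifts described above). The hypothesis $m_{st} \geq 4$ enters precisely here: when $m_{st} \geq 4$, homotopies of type $\{s,t\}$ occur only on alternating subwords of length exactly $m_{st}$, giving enough room to argue that the alternating structure is preserved; when $m_{st} = 3$ this fails because the braid relation $sts = tst$ allows arbitrary local rearrangements, and the stated bound $2m_{st}-3 = 3$ would need a separate argument.
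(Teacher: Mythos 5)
Your overall frame (pass to $wu=_W1$, run Tits' algorithm, locate a distinguished event in the reduction and pull an alternating subword back to $w$) matches the paper's, but the event you anchor on is the wrong one, and this creates a gap that the rest of the proposal does not close. Your boundary sub-lemma is correct: at the first \emph{cancellation}, the reduced word just before it contains a maximal alternating subword of type $\{a,b\}$ of length $m_{ab}+1$. But $m_{ab}+1$ is only about half of $2m_{ab}-3$, and the entire difficulty of the lemma is hidden in the sentence ``a careful bookkeeping shows that each iteration extends the same alternating subword.'' There is no reason for this: after the first cancellation the later homotopies and cancellations may occur in unrelated parts of the word and involve unrelated types, so successive applications of your sub-lemma produce length-$(m+1)$ subwords in different places that never accumulate. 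Worse, the type $\{a,b\}$ seen at the first cancellation need not be a type for which the conclusion holds at all: if $w$ contains $\gamma_{ab}(m_{ab}+1)$ as a maximal alternating subword, a homotopy on its length-$m_{ab}$ prefix already produces a cancellation (shortening it to length $m_{ab}-1$), yet $w$ need not contain anything close to $\gamma_{ab}(2m_{ab}-3)$. The paper avoids this by introducing the invariant $\kappa(v)$, the minimal number of maximal alternating subwords covering $v$, showing it is non-increasing along the Tits reduction, and anchoring on the first step where $\kappa$ \emph{drops}; at that step the homotoped maximal alternating subword is forced to have length $2m_{st}-l$ with $l\le 2$, which is where the near-relator length comes from in one stroke, and the backward propagation is then carried out for the ``cores'' of the subwords in a minimal cover.

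Two further points. Your propagation claim --- that a homotopy of type $\{c,d\}\ne\{a,b\}$ leaves an $\{a,b\}$-alternating subword intact --- fails when the types share a letter: a homotopy of type $\{b,c\}$ swaps $b$'s and $c$'s on its support and can therefore clip an extremal letter off an adjacent $\{a,b\}$-alternating subword (this one-letter erosion at each end is exactly why the paper works with cores and why the bound is $2m_{st}-3$ rather than $2m_{st}$, but it has to be controlled, not assumed away). Finally, the observation you invoke at the end --- that no reduced word contains an alternating subword of type $\{s,t\}$ of length $\ge 2m_{st}-1$ --- is false in the paper's sense of ``reduced'' (freely reduced, i.e.\ no $ss$): the word $\gamma_{st}(2m_{st}-1)$ is itself reduced in this sense, represents a generator, and is in fact the prototypical example the lemma is about.
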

	\begin{proof}
		If $w = w'u$, then let $w'' := w'$. Otherwise let $w'' := wu$. In both cases $w''$ is a nontrivial reduced word and $w'' =_W 1$. It is clearly sufficient to show that $w''$ contains a subword of the form $\gamma_{st}(2m_{st}-3)$ that does not contain the last letter.
		
		Using Tits' algorithm for the word problem, we find \changed{a sequence} of reduced words $w''=w_0,w_1,w_2,...,w_q=1$ such that $w_{i+1}$ is obtained from $w_i$ by a homotopy, followed by as many cancellations as possible, until the word obtained is reduced. Note that going from $w_{i}$ to $w_{i+1}$ amounts to either removing a whole relation $\gamma_{st}(2m_{st})$ and canceling some more, or by replacing a maximal alternating subword $\gamma_{st}(2m_{st}-l)$ by the complementary word $\gamma_{ts}(l)$ for some $1\leq l \leq m_{st}$.
		
		For a reduced word $v$, we let $\kappa(v)$ be the minimal number of maximal alternating subwords needed to cover $v$. For brevity, we call $v_1,...,v_p$ a \emph{minimal cover} of $v$ if $v_1,...,v_p$ are maximal alternating subwords covering $v$ such that $p=\kappa(v)$. For such a minimal cover, note that $v_i$ and $v_{i+1}$ overlap by at most one letter, and there is at least one letter between $v_i$ and $v_{i+2}$. Consequently, if $v=xyz$ where $y$ is a maximal alternating subword of length at least $3$, then any minimal cover must contain $y$. In particular we have
		\begin{equation} 
			\kappa(v) = \kappa(x) +  1 + \kappa(z). \label{eq:kappa_three}
		\end{equation} 
		\changed{Moreover if} $v=xyz$ where $y=st$ is a maximal alternating subword of length $2$ and $v_1,...,v_p$ is any minimal cover of $v$ then 
		\begin{equation}
		 \kappa(v) = 
			\begin{cases} 
				\kappa(x) + \kappa(z) \text{ if } v = \cdot \overbrace{\cdot \cdot ut}^{v_{i}}\overbrace{su' \cdot \cdot}^{v_{i+1}} \cdot \text{ for } u,u' \in S \backslash \{s,t\} \\
				\kappa(x) +  1 + \kappa(z) \text{ in any other case.}
			\end{cases} \label{eq:kappa_two}
		\end{equation}
		We define \emph{the core of $v_i$}, denoted by $\hat v_i$ to be the subword of $v_i$ obtained by removing the possible overlapping letter with $v_{i-1}$ at the beginning and the possible overlapping letter with $v_{i+1}$ at the end, if $i = p$ we also remove the last letter of $v_i$ from the core.
		
		Since $w_0$ is non-trivial, $\kappa(w_0)>0$ and $w_q$ is empty, so $\kappa(w_q)=0$.	\changed{Observe that the} sequence $\kappa(w_i)$ is non-increasing. \changed{To see this we distinguish two cases. Suppose first that} $w_i = xyz$ and $w_{i+1} = xy'z$ where $y=\gamma_{st}(2m_{st}-l)$ is a maximal alternating subword and $y'=\gamma_{ts}(l)$ for some $1 \leq l \leq m_{st}$. Since \changed{$m_{st}\ge 4$} we know that $y$ is of length at least \changed{$4$}. Applying \eqref{eq:kappa_three} we have that $\kappa(w_i)=\kappa(x)+ 1 + \kappa(y) \geq \kappa(w_{i+1})$. \changed{In the the remaining case} $w_{i+1}$ is obtained from and $w_i$ by removing a relation and possibly \changed{followed by cancelations}\changed{. In this case it is trivial} that $\kappa(w_i) \geq \kappa(w_{i+1})$.
		
		Let $w_k$ be the first word in the sequence such that $\kappa(w_k) < \kappa(w_0)$. We can write $w_{k-1}=xyz$ \changed{ such that $w_k$ is obtained from $xy'z$ by cancellation (if $y'=1$)} where $y=\gamma_{st}(2m_{st}-l)$ and $y'=\gamma_{ts}(l)$ for some $0 \leq l \leq m_{st}$. We first show that $l \leq 2$. Suppose \changed{that $l\ge 3$. Thus both $y$ and $y'$ are} alternating words of length at least $3$. Applying \eqref{eq:kappa_three} yields that
		\[\kappa(w_{k-1}) = \kappa(x) +  1 + \kappa(z) = \kappa(w_{k}).\]
		This is in contradiction with the choice of $w_k$, so $l$ is at most $2$.
		
		Let $v_1,...,v_p$ be a cover of $w_{k-1}$ by maximal alternating words where $p = \kappa(w_{k-1})$. Let $m$ be the integer such that $v_m=y$. Consider the core $\hat v_m$ of $v_m$. We show that $\hat v_m$ is of length at least $2m_{st}-3$. If $l \leq 1$, that is if $v_m$ is of length at least $2m_{st}-1$ then clearly $\hat v_m$ is at least of length $2m_{st}-3$. Since $l < 3$ we are left with the case where $l = 2$. Recall that $w_{k-1} = xyz$ and $w_{k}=xy'z$, where $y'=ts$. Since $\kappa(w_k) < \kappa(w_{k-1})$ we know by \eqref{eq:kappa_two} that there are \changed{$u,u' \in S\backslash\{s,t\}$}  such that 
		\begin{align*} w_{k-1} &= \cdot \overbrace{\cdot \cdot u}^{v_{m-1}} \underbrace{st \cdots st}_{v_m} \overbrace{u' \cdot \cdot}^{v_{m+1}} \cdot  \\
								w_k &= \cdot \overbrace{\cdot \cdot u \makebox[0pt][l]{$\displaystyle{\underbrace{\phantom{\ ts}}_{y'}}$}\ t}^{v'_{m-1}}\ \overbrace{s\ u' \cdot \cdot}^{v'_{m+1}} \cdot
		\end{align*}
		Thus in this case $\hat v_m=v_m$ is of length $2m_{st}-2$.
		
		For every integer $0 \leq i \leq k-1$ let $u_1^i,..., u_p^i$ be a minimal cover of $w_i$, where $p = \kappa(w_0) = \kappa(w_{k-1})$. \changed{We show that for any $0 \leq i \leq k-1$ we have $\hat u_m^i = \gamma_{st}(2m_{st}-l)$ for some $l \leq 3$.} This will finish the proof as $\hat u_m^0$ is a subword of $w_0=w''$ which does not contain the last letter of $w''$. 
		
		\changed{Suppose to the contrary that for some $0 \leq i \leq k-1$ there exists no $l\leq 3$ such that $\hat u_m^i = \gamma_{st}(2m_{st}-l)$. Choose $i$ largest with this property. } Since $\kappa(w_{i}) = \kappa(w_{i+1})$  and $\kappa(w_i) = \kappa(x) + 1 +\kappa(z)$ we must have that $w_{i}=xyz$ and $w_{i+1}=xy'z$ \changedm{where $y$ is a maximal alternating subword of $w_i$ and $y'$ is a nontrivial alternating subword of $w_{i+1}$ in the same letters as $y$. Thus any minimal cover of $w_i$ is obtained by combining in an obvious way minimal covers of $x$ and $z$ with the maximal alternating subword $y$, and similarly any minimal cover of $w_{i+1}$ is obtained by combining minimal covers of $x$ and $z$ with a word covering $y'$. If $u_m^{i+1}$ does not cover $y'$ then $\hat u_m^{i+1}$ is a subword of $x$ or $z$. Therefore $\hat u_m^i$ contains $\hat u_m^{i+1}$ 
		(seen as subwords of either $x$ or $z$).} This is a contradiction since $\hat u_m^{i+1} = \gamma_{st}(2m_{st}-l)$  for $l \leq 3$ by definition of $i$. \changed{Thus we can assume that} $u_m^{i+1}=y'$. By definition of $i$ the word $\hat u_m^{i+1}=\gamma_{st}(2m_{st}-l)$ for $l\leq 3$. However $y'$ cannot be longer than $y$ so we have
		\[ 2m_{st}-3 \leq |y'| \leq |y| = 2m_{st}-|y'| \leq 3.\]
		Therefore $m_{st} \leq 3$ which is a contradiction. 
	\end{proof}
	\begin{remark} Alternatively, small cancellation theory can be used to prove Lemma \ref{lemma:almost_whole_rel}. We sketch such a proof. Realize $w$ by a loop in the Cayley graph, and let $D$ be a reduced van Kampen diagram for this loop where discs correspond to relations of the form $(st)^{m_{st}}$ where $s\neq t \in S$, and let $D'$ be a disk component of $D$. Then $D'$ is a subdivision of a disc into polygons such that each vertex has valence at least 3 and each face has at least 6 edges. Small cancellation theory then asserts that there must be at least two of these faces which have at most 3 edges in the interior of $D$, one of these yielding the desired subword in $w$.
	\end{remark}

	\normalsize

\end{document}